\newtheorem{theorem}{Theorem}[section]
\newtheorem{corollary}[theorem]{Corollary}
\newtheorem{definition}[theorem]{Definition}
\newtheorem{example}[theorem]{Example}
\newtheorem{lemma}[theorem]{Lemma}
\newtheorem{proposition}[theorem]{Proposition}
\newtheorem{remark}[theorem]{Remark}
\newenvironment{proof}[1][Proof]{\noindent \textbf{#1.} }{\  \rule{0.5em}{0.5em}}
\numberwithin{equation}{section}
\newcommand{\Real}{\mathbb R}
\newcommand{\oo}{\omega}
\begin{document}

\title{Jensen's Inequality for $g$-Convex Function under $g$-Expectation}
\author{Guangyan JIA\thanks{%
The author thanks the partial support from The National Basic Research
Program of China (973 Program) grant No. 2007CB814901 (Financial Risk).
email address: jiagy@sdu.edu.cn} \ and Shige PENG\thanks{%
The author thanks the partial support from The National Basic Research
Program of China (973 Program) grant No. 2007CB814900 (Financial Risk).
email address: peng@sdu.edu.cn} \\
School of Mathematics and System Sciences\\
Shandong University, Jinan, Shandong, 250100, P.R.China}
\maketitle

{\small \textbf{Abstract.} A real valued function defined on }$\mathbb{R}$
{\small is called }$g${\small --convex if it satisfies the following
\textquotedblleft generalized Jensen's inequality\textquotedblright \ under
a given }$g${\small -expectation, i.e., }$h(\mathbb{E}^{g}[X])\leq \mathbb{E}%
^{g}[h(X)]${\small , for all random variables }$X$ {\small such that both
sides of the inequality are meaningful. In this paper we will give a
necessary and sufficient conditions for a }$C^{2}${\small -function being }$%
g ${\small -convex. We also studied some more general situations. We also
studied }$g${\small -concave and }$g${\small -affine functions.}

\section{Introduction}

Jensen's inequality plays an important role in probability theory. It claims
that\ for any given convex function $h$ defined on $\mathbb{R}$ we have%
\begin{equation*}
h(E[X])\leq E[h(X)]
\end{equation*}%
for each random variable $X$ such that $E[X]$ and $E[h(X)]$ are meaningful.
Here $E[\cdot ]$ stands for the expectation related to a probability $P$. It
is worth to mention that its converse is also true: If the above inequality
holds true for all random variables $X$ such that both $E[X]$ and $E[h(X)]$
are meaningful, then $h$ is a convex function.

In 1997 Peng \cite{peng1997} (see also \cite{peng1997b}) introduced the
notion of $g$-expectation $\mathbb{E}^{g}[\cdot ]$ defined via a backward
stochastic differential equation of which the generator is a given function $%
g=g(t,y,z)_{(t,y,z)\in \lbrack 0,T]\times \mathbb{R}\times \mathbb{R}^{d}}$.
A $g$-expectation preserves most properties of the classical expectations
except that it is a nonlinear functional. Its nonlinearity is characterized
by its generator $g$. It becomes a typical example of nonlinear expectations
under which the time-consistency holds true thus a theory of nonlinear
martingales can be developed. It is also a useful tool to the nonlinear
dynamic pricing as well as dynamic risk measures in finance.

A very interesting problem is whether, for a $g$-expectation, the following
generalized Jensen's inequality is true:%
\begin{align*}
h(\mathbb{E}^{g}[X])& \leq \mathbb{E}^{g}[h(X)],\  \  \  \\
& \text{for each }X\text{ s.t. }\mathbb{E}^{g}[X]\text{ and }\mathbb{E}%
^{g}[h(X)]\text{ are meaningful.}
\end{align*}%
This problem was initialed in \cite[CHMP2000]{briandCHMP2000} in which a
counterexample was given to show that the above generalized Jensen's
inequality fails for a very simple convex functions $h$. A sufficient
condition for a special situation was also provided. Chen, Kulperger and
Jiang \cite[2003]{chenKJ2003} have obtained a very interesting result:
provided $g$ does not depend on $y$, the above generalized Jensen's
inequality holds true for each convex function $h$ if and only if $g$ is a
super-homogeneous function, i.e., $g(t,\lambda z)\geq \lambda g(t,z)$, $%
dP\times dt-a.s.$ for $\lambda \in \mathbb{R}$ and $z\in \mathbb{R}^d$. This
result was improved by \cite[2005]{hu2005} showing that, in fact, $g$ must
be independent of $y$.

In this paper we study this problem with a different point of view: For each
fixed function $g$, to give an explicit characterization to $h$ satisfying
the above generalized Jensen's inequality. We have obtained the following
result: For a $C^{2}$-function $h$ the above generalized Jensen inequality
holds if and only if $h$ satisfies:
\begin{equation*}
\frac{1}{2}h^{\prime \prime }(y)|z|^{2}+g(t,h(y),h^{\prime }(y)z)-h^{\prime
}(y)g(t,y,z)\geq 0,\,dP\times dt-a.s., \  \forall (y,z)\in \mathbb{R}\times
\mathbb{R}^{d}.
\end{equation*}%
The previously mentioned result of classical Jensen's inequality just
corresponds a special case where $g\equiv 0$. The above mentioned results in
\cite{chenKJ2003} and \cite{hu2005} can be also obtained from our new
result. For the case where $h$ is only a continuous function we have also
obtained a similar result by using the notion of the well-known viscosity
solution in partial differential equations.

It is natural to call a $h$ satisfying the above inequality to be a $g$%
-convex function. In general, a continuous function $h$ satisfying the
generalized Jensen's inequality is called a $g$-convex function. In this
paper we will study this type of functions. We also investigate the related $%
g$-concave as well as $g$-affine functions. A deep relation of $g$-convexity
and backward stochastic viability property introduced by Buckdahn,
Quincampoix and Rascanu in \cite{buckdahnQR2000} is also disclosed.


This paper is organized as follows. In Section \ref{sec2} we recall some
facts about $g$-expectation and BSDEs. The notion of $g$-convexity as well
as the necessary and sufficient condition for a $g$--convex $C^{2}$-function
will be given in Section \ref{sec3}. We establish the necessary and
sufficient condition for a continuous $g$-convex function in Section \ref%
{sec4}. An equivalence between $g$-convexity and backward stochastic
viability property is given in Section \ref{sec5}. Finally, in
Section \ref{sec7} we study functional operations preserving
$g$-convexity and apply the results obtained in foregoing sections
to prove some properties of $g$-expectations.


\section{Some Facts about $g$-Expectations}

\label{sec2}

Let $(\Omega,\mathcal{F},P)$ be a given probability space and let $%
(W_{t})_{t\geq0}$ be a $d$--dimensional Brownian motion in this space. The
natural filtration generated by $W$ will be denoted by $(\mathcal{F}%
_{t})_{t\geq0}$.

Let $T>0$ be a fixed real number. For any $0\leq t\leq T$, we denote by $%
L^{p}(\mathcal{F}_{t})$, the space of $\mathcal{F}_{t}$-measurable random
variables satisfying $E[|X|^{p}]<\infty $, for $p\geq 1$. For a positive
integer $n$ and $z\in \mathbb{R}^{n}$, we denote by $\left \vert
z\right
\vert $ the Euclidean norm of $z$. We will denote by $L_{\mathcal{F}%
}^{2}(0,T;\mathbb{R}^{n})$, the space of all progressively measurable $%
\mathbb{R}^{n}$--valued processes such that $\mathbb{E}\left[
\int_{0}^{T}\left \vert \psi _{t}\right \vert ^{2}\,dt\right] <\infty $; and
by $\mathcal{S}_{\mathcal{F}}^{2}(0,T;\mathbb{R}^{n})$ the elements in $L_{%
\mathcal{F}}^{2}(0,T;\mathbb{R}^{n})$ with continuous paths such that $%
\mathbb{E}\left[ \sup_{t\in \lbrack 0,T]}\left \vert \psi _{t}\right \vert
^{2}\right] <\infty $. And we denote by $\mathcal{D}_{\mathcal{F}}^{2}(0,T)$
the set of all RCLL (right continuous with left limit) processes $\phi $ in $%
L_{\mathcal{F}}^{2}(0,T;\mathbb{R})$ such that $E[\sup_{t\in[0,T]}\left
\vert \phi _{t}\right \vert ^{2}]<\infty $.

Let us consider a function $g$, which will be in the sequel the generator of
the backward stochastic differential equation (BSDE), defined on $\Omega
\times \lbrack 0,T]\times \mathbb{R}^{m}\times \mathbb{R}^{m\times d}$, with
values in $\mathbb{R}^{m}$, such that the process $(g(t,y,z))_{t\in \lbrack
0,T]}$ is progressively measurable for each $(y,z)$ in $\mathbb{R}^{m}\times
\mathbb{R}^{m\times d}$. Through out this paper the function $g$ will
satisfy the following conditions..
\begin{equation}
\begin{cases}
(a) & \text{There exists a constant }\mu >0\text{, for each }(y,z),(\bar{y},%
\bar{z})\in \mathbb{R}^{m}\times \mathbb{R}^{m\times d}, \\
& \qquad \left \vert g(t,y,z)-g(t,\bar{y},\bar{z})\right \vert \leq \mu
(\left \vert y-\bar{y}\right \vert +\left \vert z-\bar{z}\right \vert ); \\
(b) & (g(t,0,0))_{t\in \lbrack 0,T]}\in L_{\mathcal{F}}^{\infty }(0,T;%
\mathbb{R}^{m}).%
\end{cases}
\label{eqn2.2}
\end{equation}%
It is by now well known (see Pardoux and Peng \cite{pardouxP1990}) that
under the assumptions (\ref{eqn2.2}), for any random variable $X\in L^{2}(%
\mathcal{F}_{T})$, the BSDE
\begin{equation}
Y_{t}=X+\int_{t}^{T}g(s,Y_{s},Z_{s})\,ds-\int_{t}^{T}Z_{s}\,dW_{s},\quad
t\in \lbrack 0,T],  \label{eqn2.1}
\end{equation}%
has a unique adapted solution $(Y_{t},Z_{t})_{t\in \lbrack 0,T]}\in \mathcal{%
S}_{\mathcal{F}}^{2}(0,T;\mathbb{R}^{m})\times L_{\mathcal{F}}^{2}(0,T;%
\mathbb{R}^{m\times d})$. In the sequel we denote equation (\ref{eqn2.1}) by
$(g,T,X)$.

In this paper we mainly discuss the $1$-dimensional BSDE, i.e., $m=1$. The
following situations are typical:
\begin{equation}
\begin{cases}
(a).\quad g(\cdot ,0,0)\equiv 0, \\
(b).\quad g(\cdot ,y,0)\equiv 0,\forall y\in \mathbb{R}.%
\end{cases}
\label{eqn2.3}
\end{equation}%
Obviously (b) implies (a). The following notion of $g$-expectation was
introduced by Peng \cite{peng1997}.

\begin{definition}
\label{defn2.1} Let $m=1$. We denote by $\mathbb{E}_{t,T}^g[X]:=Y_t$:
\begin{equation}  \label{eqn2.4}
\mathbb{E}_{t,T}^g[\cdot]: L^2(\mathcal{F}_T)\to L^2(\mathcal{F}_t),\quad
0\le t\le T<\infty.
\end{equation}
$(\mathbb{E}_{t,T}^g[\cdot])_{0\le t\le T}$ is called $g$-expectation.
\end{definition}

Applications of $g$-expectations in dynamic superpricing and dynamic risk
measures can be found in \cite{barrieuE2005, chenE2002, coquetHMP2002,
duffieE1992, elkarouiPQ1997, elkarouiQ1997, frittelliR2004, peng2004,
peng2004b, rosazza2005, yong1999}.

\begin{remark}
\label{rem2.11} The $g$-expectation originally introduced in \cite{peng1997}
corresponds the case in which $g$ satisfies (\ref{eqn2.3})-b, that is, the
situation of "zero interest rate" (see next section or \cite{peng2004}).
Peng \cite{peng2004}, \cite{peng2005} also introduced the notion of $g$%
-evaluation if $g$ satisfies (\ref{eqn2.3})-a, the situation of
"self-financing" . For the simplicity, we call them all $g$-expectation here
whenever $g$ satisfies (\ref{eqn2.2}).
\end{remark}

Also we have the following properties about $g$-expectation (see \cite[%
Theorem 3.4]{peng2004}):

\begin{proposition}
\label{prop2.4} Let the generator $g$ satisfies (\ref{eqn2.2}) and (\ref%
{eqn2.3})-a. Then the above defined $g$-expectation $\mathbb{E}^g[\cdot]$
satisfies, for each $t\le T<\infty$, $X,\bar{X}\in L^2(\mathcal{F}_T)$,

(\textbf{A1}) $\mathbb{E}_{t,T}^g[X]\ge \mathbb{E}^g_{t,T}[\bar{X}]$, a.s.,
if $X\ge \bar{X}$;

(\textbf{A2}) $\mathbb{E}_{T,T}^g[X]=X$;

(\textbf{A3}) $\mathbb{E}_{s,t}^g[\mathbb{E}_{t,T}^g[X]]=\mathbb{E}%
^g_{s,T}[X]$, a.s., for $s\le t$;

(\textbf{A4}) $1_{A}\mathbb{E}_{t,T}^{g}[X]=\mathbb{E}_{t,T}^{g}[1_{A}X]$, $%
\forall A\in \mathcal{F}_{t}$,\newline
where $1_{A}$ is the indicator function of $A$, i.e. $1_{A}(\omega )$ equals
$1$ when $\omega \in A$ and $0$ otherwise.

If (\ref{eqn2.3}) does not hold, (A1)-(A3) still hold true. But (A4) is
replaced by the following: for each $X_{1},\cdots ,X_{N}\in L^{2}(\mathcal{F}%
_{t})$ and for each $\mathcal{F}_{t}$-partition $\left \{ A_{i}\right \}
_{i=1}^{N}$ of $\Omega$ (i.e. $A_{i}\in \mathcal{F}_{t}$, $A_{i}\cap
A_{j}=\emptyset$ if $i\not=j$ and $\cup A_{i}=\Omega $), we have

(\textbf{A4'}) $\sum_{i=1}^{N}1_{A_{i}}\mathbb{E}_{t,T}^{g}[X_{i}]=\mathbb{E}%
_{t,T}^{g}[\sum_{i=1}^{N}1_{A_{i}}X_{i}]$.
\end{proposition}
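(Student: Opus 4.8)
The plan is to derive properties (A1)--(A4') from the corresponding properties of solutions of the BSDE $(g,T,X)$, which are classical (Pardoux--Peng \cite{pardouxP1990}) together with the comparison theorem for BSDEs. First, for (A1), I would invoke the comparison theorem: if $X \geq \bar X$ a.s., then the solutions $Y$ and $\bar Y$ of $(g,T,X)$ and $(g,T,\bar X)$ satisfy $Y_t \geq \bar Y_t$ a.s. for all $t$, which is exactly $\mathbb{E}_{t,T}^g[X] \geq \mathbb{E}_{t,T}^g[\bar X]$. Property (A2) is immediate from the terminal condition in \eqref{eqn2.1}: at $t = T$ the integral terms vanish, so $Y_T = X$. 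Property (A3), the time-consistency, follows from uniqueness of solutions: one checks that the process which equals $\mathbb{E}_{t,T}^g[X]$ on $[t,T]$ and, on $[s,t]$, equals the solution of the BSDE with terminal value $\mathbb{E}_{t,T}^g[X]$ at time $t$, is itself the solution of $(g,T,X)$ restricted to $[s,T]$; hence evaluating it at time $s$ gives both $\mathbb{E}_{s,t}^g[\mathbb{E}_{t,T}^g[X]]$ and $\mathbb{E}_{s,T}^g[X]$.

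For (A4') I would argue as follows. Fix an $\mathcal{F}_t$-partition $\{A_i\}_{i=1}^N$ and random variables $X_1,\dots,X_N \in L^2(\mathcal{F}_T)$. Let $(Y^i, Z^i)$ solve $(g,T,X_i)$. Consider the process $(\bar Y, \bar Z)$ defined on $[t,T]$ by $\bar Y_s = \sum_i 1_{A_i} Y^i_s$ and $\bar Z_s = \sum_i 1_{A_i} Z^i_s$. Since $1_{A_i}$ is $\mathcal{F}_t$-measurable and the $A_i$ are disjoint, multiplying the BSDE for $(Y^i,Z^i)$ by $1_{A_i}$, summing over $i$, and using $1_{A_i} g(s, Y^i_s, Z^i_s) = g(s, 1_{A_i} Y^i_s, 1_{A_i} Z^i_s) + 1_{A_i^c} g(s,0,0)$ — here assumption \eqref{eqn2.3}-a, i.e. $g(\cdot,0,0)\equiv 0$, is what makes the leftover term vanish — shows that $(\bar Y, \bar Z)$ solves the BSDE on $[t,T]$ with terminal value $\sum_i 1_{A_i} X_i$. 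By uniqueness, $\bar Y_t = \mathbb{E}_{t,T}^g[\sum_i 1_{A_i} X_i]$, which is precisely (A4'). Property (A4) is the special case $N = 2$, $X_1 = X$, $X_2 = 0$, $A_1 = A$, $A_2 = A^c$, again using $\mathbb{E}_{t,T}^g[0] = 0$ from \eqref{eqn2.3}-a. When \eqref{eqn2.3} fails, the term $1_{A_i^c} g(s,0,0)$ no longer vanishes, so the single-indicator identity (A4) breaks, but the full-partition version (A4') survives because $\sum_i 1_{A_i} = 1$ leaves no complementary region, so no spurious $g(\cdot,0,0)$ term appears.

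The main obstacle is the algebraic manipulation in (A4'): one must verify carefully that the pasted process $(\bar Y,\bar Z)$ genuinely satisfies the BSDE with the claimed generator and terminal condition, i.e. that $\sum_i 1_{A_i} g(s,Y^i_s,Z^i_s) = g(s,\bar Y_s,\bar Z_s)$ holds $dP\times dt$-a.s. under \eqref{eqn2.3}-a, and $\mathcal{F}_t$-measurability of the indicators is essential both for pulling $1_{A_i}$ through the stochastic integral $\int_t^T Z^i_s\,dW_s$ and for the process $\bar Y$ to remain adapted on $[t,T]$. Everything else reduces to quoting existence, uniqueness, and the comparison theorem for BSDEs from \cite{pardouxP1990}. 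Since the statement attributes the result to \cite[Theorem 3.4]{peng2004}, I would at this point simply refer the reader there for the detailed verification rather than reproduce it in full.
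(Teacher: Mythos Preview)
The paper does not prove Proposition~\ref{prop2.4} at all: it is stated with the attribution ``see \cite[Theorem~3.4]{peng2004}'' and no argument is given. Your sketch is the standard one underlying that reference (comparison theorem for (A1), terminal condition for (A2), uniqueness/flow property for (A3), and the $\mathcal{F}_t$-measurable pasting argument for (A4) and (A4')), so there is nothing to contrast. One small slip: the identity you display for (A4') should read
\[
1_{A_i}\,g(s,Y^i_s,Z^i_s)=g(s,1_{A_i}Y^i_s,1_{A_i}Z^i_s)-1_{A_i^c}\,g(s,0,0),
\]
with a minus sign rather than a plus; in fact for the partition case it is cleaner to argue directly that on each $A_i$ both $\sum_j 1_{A_j}g(s,Y^j_s,Z^j_s)$ and $g(s,\bar Y_s,\bar Z_s)$ equal $g(s,Y^i_s,Z^i_s)$, which avoids the $g(\cdot,0,0)$ bookkeeping altogether and makes transparent why (A4') needs no hypothesis on $g(\cdot,0,0)$.
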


\begin{lemma}[See \protect \cite{elkarouiPQ1997} or Proposition 2.2 in
\protect \cite{briandCHMP2000}]
\label{lem 4.1} Let $g$ satisfy (\ref{eqn2.2}) and $m=1$, and let $X\in
L^{2}(\mathcal{F}_{T})$. Then the solution $(Y_{t},Z_{t})_{t\in \lbrack
0,T]} $ of BSDE (\ref{eqn2.1}) satisfies
\begin{eqnarray*}
&&E\left[ \sup_{s\in \lbrack t,T]}(e^{\beta s}\left \vert Y_{s}\right \vert
^{2})+\int_{t}^{T}e^{\beta s}\left \vert Z_{s}\right \vert ^{2}\,ds|\mathcal{%
F}_{t}\right] \\
&\leq &KE\left[ e^{\beta T}\left \vert X\right \vert
^{2}+(\int_{t}^{T}e^{(\beta /2)s}\left \vert g(s,0,0)\right \vert \,ds)^{2}|%
\mathcal{F}_{t}\right] .
\end{eqnarray*}%
where $\beta =2(\mu +\mu ^{2})$ and $K$ is a positive constant only
depending on $\mu $.
\end{lemma}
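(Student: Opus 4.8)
The plan is to use Itô's formula on the semimartingale $e^{\beta s}|Y_s|^2$ and exploit the Lipschitz assumption (\ref{eqn2.2})-(a) to absorb the "bad" terms. First I would apply Itô's formula to $e^{\beta s}\abs{Y_s}^2$ between $t$ and $T$, using the dynamics $dY_s = -g(s,Y_s,Z_s)\,ds + Z_s\,dW_s$ coming from BSDE (\ref{eqn2.1}); this produces
\begin{equation*}
e^{\beta t}\abs{Y_t}^2 + \int_t^T e^{\beta s}\bigl(\beta\abs{Y_s}^2 + \abs{Z_s}^2\bigr)\,ds = e^{\beta T}\abs{X}^2 + 2\int_t^T e^{\beta s} Y_s\, g(s,Y_s,Z_s)\,ds - 2\int_t^T e^{\beta s} Y_s Z_s\,dW_s.
\end{equation*}
Then I would bound the driver term using the Lipschitz property: $\abs{g(s,Y_s,Z_s)} \le \abs{g(s,0,0)} + \mu\abs{Y_s} + \mu\abs{Z_s}$, so that $2\abs{Y_s}\abs{g(s,Y_s,Z_s)} \le 2\abs{Y_s}\abs{g(s,0,0)} + 2\mu\abs{Y_s}^2 + 2\mu\abs{Y_s}\abs{Z_s}$. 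The key algebraic trick is to split the cross term via Young's inequality, $2\mu\abs{Y_s}\abs{Z_s} \le 2\mu^2\abs{Y_s}^2 + \tfrac12\abs{Z_s}^2$, and likewise $2\abs{Y_s}\abs{g(s,0,0)} \le \abs{Y_s}^2 + \abs{g(s,0,0)}^2$ (one may carry a free parameter here to be optimized, but the constant is allowed to depend on $\mu$). With the choice $\beta = 2(\mu+\mu^2)$, the terms $\beta\abs{Y_s}^2$ on the left exactly dominate $2\mu\abs{Y_s}^2 + 2\mu^2\abs{Y_s}^2$ on the right (leaving room for the $\abs{Y_s}^2$ from the $g(s,0,0)$ term by slightly inflating $\beta$ or absorbing into $K$), and $\tfrac12\abs{Z_s}^2$ is absorbed by $\abs{Z_s}^2$ on the left. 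This leaves the clean inequality
\begin{equation*}
e^{\beta t}\abs{Y_t}^2 + \tfrac12\int_t^T e^{\beta s}\abs{Z_s}^2\,ds \le e^{\beta T}\abs{X}^2 + \int_t^T e^{\beta s}\abs{g(s,0,0)}^2\,ds - 2\int_t^T e^{\beta s} Y_s Z_s\,dW_s.
\end{equation*}

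Taking conditional expectation $E[\,\cdot\mid\F_t]$ kills the stochastic integral (after a standard localization argument, using that $Y\in\Ss_\F^2$ and $Z\in L_\F^2$ so the integrand is genuinely integrable), giving the desired control on $E[\int_t^T e^{\beta s}\abs{Z_s}^2\,ds\mid\F_t]$, and also rewriting $\int_t^T e^{\beta s}\abs{g(s,0,0)}^2\,ds = \int_t^T (e^{(\beta/2)s}\abs{g(s,0,0)})^2\,ds \le (\int_t^T e^{(\beta/2)s}\abs{g(s,0,0)}\,ds)^2$ on $[t,T]$ up to a factor depending only on $T$ — actually one should be slightly careful and keep it as the integral of the square, or dominate by the square of the integral at the cost of the length of the interval, which again folds into $K=K(\mu)$ if we also allow dependence on $T$; I would follow whatever normalization the paper's later uses require. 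For the $\sup_{s}$ term, I would go back to the inequality before taking expectations, take $\sup_{s\in[t,T]}$ on the left of $e^{\beta s}\abs{Y_s}^2$, and handle the martingale term with the Burkholder–Davis–Gundy inequality: $E[\sup_s\abs{\int_t^s e^{\beta r}Y_rZ_r\,dW_r}\mid\F_t] \le C\, E[(\int_t^T e^{2\beta r}\abs{Y_r}^2\abs{Z_r}^2\,dr)^{1/2}\mid\F_t] \le C\,E[\sup_s(e^{\beta s/2}\abs{Y_s})\,(\int_t^T e^{\beta r}\abs{Z_r}^2\,dr)^{1/2}\mid\F_t]$, then apply $ab\le \eps a^2 + \tfrac1{4\eps}b^2$ to absorb the $\sup$ into the left side and use the already-obtained bound on the $Z$ integral for the remaining factor.

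The main obstacle — really the only subtle point — is the justification that the stochastic integral has zero conditional expectation and that BDG applies, i.e., the localization/integrability bookkeeping: a priori one only knows $Y\in\Ss_\F^2$ and $Z\in L_\F^2$, so $\int_t^\cdot e^{\beta r}Y_rZ_r\,dW_r$ need not be a true martingale without a stopping-time argument. The standard fix is to introduce $\tau_n=\inf\{s\ge t: \int_t^s\abs{Z_r}^2\,dr \ge n\}\wedge T$, run all estimates up to $\tau_n$ where the local martingale is a genuine martingale, obtain a bound uniform in $n$, and pass to the limit by monotone/dominated convergence using the $\Ss_\F^2\times L_\F^2$ membership. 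Everything else is the bilinear-form bookkeeping of choosing $\beta=2(\mu+\mu^2)$ so the sign works out, which is routine once the structure above is in place.
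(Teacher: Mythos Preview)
The paper does not prove this lemma itself; it is quoted from \cite{elkarouiPQ1997} and \cite{briandCHMP2000}. Your overall scheme---It\^o's formula on $e^{\beta s}|Y_s|^2$, the Lipschitz bound on the driver, Young's inequality, conditional expectation, then BDG for the $\sup$, with a localization to justify the martingale step---is precisely the standard argument found in those references.

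The one genuine gap is your treatment of the inhomogeneous term $2|Y_s|\,|g(s,0,0)|$. The pointwise split $2|Y_s|\,|g(s,0,0)|\le |Y_s|^2+|g(s,0,0)|^2$ leaves an extra $\int_t^T e^{\beta s}|Y_s|^2\,ds$ on the right with nothing to absorb it once $\beta=2(\mu+\mu^2)$ is fixed as in the statement (so you cannot ``slightly inflate $\beta$''); and the conversion you then write, $\int_t^T \bigl(e^{(\beta/2)s}|g(s,0,0)|\bigr)^2\,ds\le \bigl(\int_t^T e^{(\beta/2)s}|g(s,0,0)|\,ds\bigr)^2$, goes the wrong way---Cauchy--Schwarz gives the reverse, and no factor depending on $T$ can repair it. Nor may $K$ depend on $T$, since the lemma asserts $K=K(\mu)$ only. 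The correct move, as in the cited proofs, is not to split this term pointwise at all: bound
\[
\int_t^T 2e^{\beta s}|Y_s|\,|g(s,0,0)|\,ds \le 2\Bigl(\sup_{s\in[t,T]}e^{(\beta/2)s}|Y_s|\Bigr)\int_t^T e^{(\beta/2)s}|g(s,0,0)|\,ds,
\]
carry this product through to the $\sup$/BDG step, and only then apply $ab\le\epsilon a^2+\tfrac{1}{4\epsilon}b^2$ so that the $\sup$ is absorbed into the left-hand side. This simultaneously makes the specified $\beta$ exactly sufficient, produces the term $\bigl(\int_t^T e^{(\beta/2)s}|g(s,0,0)|\,ds\bigr)^2$ directly, and keeps $K$ a function of $\mu$ alone.
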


\begin{remark}
The above lemma implies that $\mathbb{E}^g[\cdot]$ is continuous in $L^2$.
\end{remark}

The decomposition theorem of $\mathbb{E}^g$-supermartingale obtained in \cite%
{peng1999} (see also \cite{peng2004}) will play an important role in this
paper.

\begin{proposition}[Decomposition theorem of $\mathbb{E}^g$-supermartingale]

\label{prop4.1} We assume that $g$ satisfies (\ref{eqn2.2}) and $m=1$. Let $%
Y\in \mathcal{D}_\mathcal{F}^2(0,T)$ be a $g$-supermartingale,
namely, for each $0\le s\le t\le T$,
\begin{equation*}
\mathbb{E}_{s,t}^g[Y_t]\le Y_s.
\end{equation*}
Then there exists a unique $\mathcal{F}_t$-adapted increasing and RCLL
process $A\in \mathcal{D}_\mathcal{F}^2(0,T)$ (thus predictable) with $A_0=0$%
, such that, $Y$ is the solution of the following BSDE:
\begin{equation*}
Y_t=Y_T+(A_T-A_t)+\int_t^T g(t,Y_s,Z_s)-\int_t^T Z_s\,dW_s,\quad t\in[0,T].
\end{equation*}
\end{proposition}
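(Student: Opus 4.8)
The plan is to prove the decomposition theorem of $\mathbb{E}^g$-supermartingales by first establishing it on a finite grid and then passing to the limit, following the standard strategy for Doob--Meyer-type decompositions but worked out at the level of BSDEs. First I would fix a partition $\pi: 0=t_0<t_1<\dots<t_n=T$ and, working backwards along the grid, define a sequence of processes: on each subinterval $[t_{i},t_{i+1}]$ let $(Y^\pi_s,Z^\pi_s)$ solve the BSDE with terminal value $Y_{t_{i+1}}$ (the original supermartingale value at the right endpoint), and record the jump $Y_{t_i} - \mathbb{E}^g_{t_i,t_{i+1}}[Y_{t_{i+1}}] \ge 0$, which is nonnegative precisely by the supermartingale property. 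Summing these jumps produces a discrete increasing process $A^\pi$, and one checks that $(Y, Z^\pi, A^\pi)$ satisfies the BSDE with the piecewise-constant "driving" increasing term. This is essentially the construction used in Peng \cite{peng1999}.

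Next I would derive a priori $L^2$-bounds on $Z^\pi$ and on the total variation $A^\pi_T$ that are uniform in the mesh of $\pi$. For the $Z^\pi$ bound one applies It\^o's formula to $|Y^\pi_s|^2$ (or to $e^{\beta s}|Y^\pi_s|^2$, exploiting the Lipschitz assumption (\ref{eqn2.2})-(a) as in Lemma \ref{lem 4.1}) and uses the fact that $Y$ itself lies in $\mathcal{D}_\mathcal{F}^2(0,T)$; the increasing process contributes a term of definite sign that can be controlled. The uniform bound on $\mathbb{E}[(A^\pi_T)^2]$ then follows by isolating $A^\pi$ in the equation and using the bounds on $Y$ and $Z^\pi$. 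With these estimates in hand, one extracts (along a subsequence of refining partitions) weak limits $Z^\pi \rightharpoonup Z$ in $L^2_\mathcal{F}(0,T;\mathbb{R}^d)$ and $A^\pi \rightharpoonup A$, identifies the limit equation using the Lipschitz continuity of $g$ to pass to the limit in the driver term, and argues that $A$ is increasing, RCLL and (being adapted and with no martingale part, by the uniqueness of the semimartingale decomposition relative to the Brownian filtration) predictable, with $A_0 = 0$. Since $Y$ is RCLL by hypothesis, so is $A$.

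Finally, uniqueness: if $Y$ admits two such decompositions with data $(Z,A)$ and $(Z',A')$, subtracting the two BSDEs gives $0 = (A_T - A'_T) - (A_t - A'_t) - \int_t^T (Z_s - Z'_s)\,dW_s$ after using that the $g$-terms coincide pointwise (they are evaluated at the same $Y_s$), so $\int_t^T (Z_s - Z'_s)\,dW_s = (A_t - A'_t) - (A_T - A'_T)$ is simultaneously a martingale and a process of finite variation, hence identically zero; this forces $Z = Z'$ in $L^2_\mathcal{F}$ and then $A = A'$. I expect the main obstacle to be the passage to the limit in the nonlinear driver $\int_t^T g(s,Y_s,Z^\pi_s)\,ds$: weak convergence of $Z^\pi$ does not immediately survive composition with $g$, so one must either use the Lipschitz bound to reduce to a weakly convergent linear term plus a uniformly small remainder, or argue via a Mazur-type convex-combination trick to upgrade to strong convergence of a suitable sequence; handling the RCLL/predictability of the limit $A$ carefully (as opposed to merely increasing) is the other delicate point.
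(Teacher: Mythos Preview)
The paper does not give its own proof of this proposition: it is quoted verbatim from \cite{peng1999} (see also \cite{peng2004}) and used as a black box. Your outline is indeed the strategy of \cite{peng1999}, so in that sense it matches the source the paper relies on.

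Two comments on the sketch itself. First, the key device in \cite{peng1999} for passing to the limit is not a generic weak-compactness/Mazur argument but a \emph{monotonic limit theorem}: as the partition is refined, the penalized solutions $Y^\pi$ increase, and this monotonicity is what allows one to upgrade to strong convergence of $Z^\pi$ (in measure on $\Omega\times[0,T]$) and hence to pass the nonlinear driver to the limit. Your anticipated obstacle is real, and monotonicity is the intended cure.

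Second, your uniqueness argument has a slip: the two drivers are $g(s,Y_s,Z_s)$ and $g(s,Y_s,Z'_s)$, which do \emph{not} coincide a priori since they depend on $Z$ and $Z'$ respectively. Subtracting the two decompositions therefore gives
\[
\int_t^T (Z_s-Z'_s)\,dW_s \;=\; (A_T-A_t)-(A'_T-A'_t) \;+\; \int_t^T \bigl[g(s,Y_s,Z_s)-g(s,Y_s,Z'_s)\bigr]\,ds,
\]
and the right-hand side is still of finite variation (the $g$-integral is absolutely continuous). Uniqueness of the semimartingale decomposition then forces $Z=Z'$, after which the $g$-terms do agree and $A=A'$ follows. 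So the conclusion stands, but the justification needs this reordering.
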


\section{$g$-Convexity for $C^{2}$-functions}

\label{sec3} To begin with we give the notion of $g$-convexity.

\begin{definition}
\label{defng-convexity} For a given $g$-expectation $\mathbb{E}^{g}[\cdot ]$%
, a function $h:\mathbb{R}\rightarrow \mathbb{R}$ is said to be $g$-convex
(resp. $g$-concave) if for each $X\in L^{2}(\mathcal{F}_{T})$ such that $%
h(X)\in L^{2}(\mathcal{F}_{T})$, one has
\begin{equation}
h(\mathbb{E}_{t,T}^{g}[X])\leq \mathbb{E}_{t,T}^{g}[h(X)],\  \text{(resp. }h(%
\mathbb{E}_{t,T}^{g}[X])\leq \mathbb{E}_{t,T}^{g}[h(X)]\text{)}\ P\text{-}%
a.s.,\ t\in \lbrack 0,T].  \tag{J}  \label{eqnJ}
\end{equation}%
$h$ is called $g$-affine if it is both $g$-convex and $g$-concave.
\end{definition}

Clearly, for each $g$ the function $h(y)=y$ is $g$-affine. Throughout this
paper, we only consider the case where $h$ is continuous. In the case when $h
$ is a $C^{2}$-function, we have the following result. For notational
convenience, we denote%
\begin{equation*}
\mathcal{L}_{g}^{t,y,z}\varphi :=\frac{1}{2}\varphi
_{yy}(y)|z|^{2}+g(t,\varphi (y),\varphi _{y}(y)z)-\varphi _{y}(y)g(t,y,z),\
\  \varphi \in C^{2}(\mathbb{R}).
\end{equation*}

\begin{theorem}
\label{thm-g-convexity} Let $g$ satisfy (\ref{eqn2.2}) and let $h\in C^{2}(%
\mathbb{R})$. Then the following two statements are equivalent:

(i) $h$ is $g$-convex (resp. $g$-concave);

(ii) For each $y\in \mathbb{R},\ z\in \mathbb{R}^{d}$,%
\begin{equation}
\mathcal{L}_{g}^{t,y,z}h \geq 0\text{ (resp. }\leq 0),\  \ dP\times dt\text{%
--a.s.}\   \label{g-convexity-differentiable}
\end{equation}
\end{theorem}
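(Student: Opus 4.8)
The plan is to prove the two directions separately, using the BSDE representation of $\mathbb{E}^g$ and a local comparison argument.

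For the direction (ii)$\Rightarrow$(i): Fix $X \in L^2(\mathcal{F}_T)$ with $h(X) \in L^2(\mathcal{F}_T)$, and let $(Y_s, Z_s)$ solve $(g,T,X)$, so $\mathbb{E}_{t,T}^g[X] = Y_t$. The idea is to apply It\^o's formula to $h(Y_s)$: one computes
\begin{equation*}
-dh(Y_s) = \left[h'(Y_s)g(s,Y_s,Z_s) - \tfrac12 h''(Y_s)|Z_s|^2\right]ds - h'(Y_s)Z_s\,dW_s .
\end{equation*}
Rewriting the drift by adding and subtracting $g(s,h(Y_s), h'(Y_s)Z_s)$, we see that $h(Y_s)$ solves a BSDE with terminal value $h(X)$, generator $g(s,\cdot,\cdot)$, and an extra drift term equal to $-\mathcal{L}_g^{s,Y_s,Z_s}h\,ds$. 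Hypothesis (ii) says exactly that this extra drift is $\leq 0$ (for the $g$-convex case), i.e. $h(Y_\cdot)$ is an $\mathbb{E}^g$-supermartingale in the BSDE-with-increasing-process sense. The technical subtlety here is that (ii) is stated pointwise in $(y,z)$ with a $dP\times dt$-null exceptional set depending on $(y,z)$; to plug in the random arguments $(Y_s(\omega), Z_s(\omega))$ I would need to pass to a version of the inequality that holds for all $(y,z)$ simultaneously off a single null set, which follows from continuity of $\mathcal{L}_g^{t,y,z}h$ in $(y,z)$ (using the Lipschitz continuity of $g$ and $h \in C^2$) plus a countable-dense-set argument. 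Once $h(Y_\cdot)$ is identified as a solution of a BSDE with a decreasing drift correction and terminal value $h(X)$, the comparison theorem for BSDEs gives $h(Y_t) \leq \mathbb{E}_{t,T}^g[h(X)]$, which is (J).

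For the direction (i)$\Rightarrow$(ii): This is the converse, and it is where the genuine work lies. Suppose (ii) fails: then there is a point $(y_0, z_0) \in \mathbb{R}\times\mathbb{R}^d$, a time $t_0 < T$, and $\epsilon > 0$ such that $\mathcal{L}_g^{t,y_0,z_0}h \leq -\epsilon$ on a set of positive $dP\times dt$ measure near $t_0$. I would construct a specific random variable $X$ — roughly, $X = y_0 + z_0 \cdot (W_T - W_{t_0})$, or a bounded modification thereof to stay in $L^2$ — run for a short time interval $[t_0, t_0+\delta]$, so that the solution $Y$ of the BSDE stays close to the deterministic trajectory driven by $(y_0, z_0)$ and $Z$ stays close to $z_0$. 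On such a short interval, the It\^o expansion above shows that $h(Y_{t_0}) - \mathbb{E}_{t_0}^g[h(X)]$ has the sign of $\int_{t_0}^{t_0+\delta} \mathcal{L}_g^{s,Y_s,Z_s}h\,ds$, which is strictly negative by the assumed failure of (ii) together with continuity of $\mathcal{L}_g$; this contradicts (J). The main obstacle is making this perturbation argument quantitatively rigorous: one must control, via the a priori estimates of Lemma \ref{lem 4.1} and standard BSDE stability, how far $(Y_s, Z_s)$ drifts from $(y_0, z_0)$ over $[t_0, t_0+\delta]$ and show the $\mathcal{L}_g$ integral does not change sign, i.e. that the "order $\delta$" negative term genuinely dominates the error terms as $\delta \to 0$. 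The $g$-concave case is entirely symmetric (replace $h$ by $-h$ is not quite legitimate since $g$ is fixed, but the same computations with reversed inequalities work), and the $g$-affine case follows by combining the two.
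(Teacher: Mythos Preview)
Your direction (ii)$\Rightarrow$(i) follows the same It\^o--plus--comparison route as the paper, but you skip a step that the paper treats carefully: for general $X\in L^2(\mathcal{F}_T)$ the process $Y$ is unbounded, and then $h'(Y)Z$ need not lie in $L^2_{\mathcal F}$ nor $h''(Y)|Z|^2$ in $L^1$, so neither the It\^o expansion nor the comparison theorem applies directly. The paper first restricts to $X\in L^\infty(\mathcal F_T)$ (so $Y$ is bounded and all terms are controlled), obtains the inequality there, then uses Lemma~\ref{prop-g-conv-in-conv} (condition (ii) forces $h$ to be convex in the usual sense) together with the truncation Lemma~\ref{bound} to pass to arbitrary $X\in L^2$. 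Without this detour your argument has a genuine integrability gap.

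For (i)$\Rightarrow$(ii) your route is \emph{different} from the paper's. The paper does not argue by contradiction or short--time perturbation; instead it builds, for each fixed $(t,y,z)$, a $g$--martingale $Y^{t,y,z}$ as the \emph{forward} SDE
\[
-dY^{t,y,z}_s=g(s,Y^{t,y,z}_s,z)\,ds-z\,dW_s,\qquad Y^{t,y,z}_t=y,
\]
so that the associated $Z$--component is the constant $z$ \emph{exactly}. By (i), $h(Y^{t,y,z})$ is a $g$--submartingale, and the paper then invokes the $g$--submartingale decomposition theorem (Proposition~\ref{prop4.1}) to produce an increasing process $A$; matching this decomposition against the It\^o expansion of $h(Y^{t,y,z})$ identifies $dA_s=\mathcal L_g^{s,Y_s^{t,y,z},z}h\,ds$, whence $\mathcal L_g^{s,Y_s^{t,y,z},z}h\ge 0$ along the path, and evaluating at $s=t$ gives (ii). This avoids entirely the quantitative stability estimates you flag as ``the main obstacle'': the decomposition theorem does that work for you. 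Your proposed test variable $X=y_0+z_0\cdot(W_T-W_{t_0})$ is less sharp, because the BSDE with that terminal datum will not have $Z\equiv z_0$, so you are forced into the approximation analysis you describe; the paper's forward--SDE construction eliminates this difficulty at the source. Your perturbation approach can in principle be pushed through, but it is both longer and requires more hypotheses (effectively a strict comparison plus uniform control of $(Y,Z)$ near $(y_0,z_0)$) than the paper's decomposition argument.
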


\begin{remark}
If we assume furthermore $g(t,y,0)\equiv0$ ((\ref{eqn2.3})-b), then we can
define
\begin{equation*}
\mathbb{E}^{g}[X|\mathcal{F}_{t}]=\mathbb{E}_{t,T}^{g}[X].
\end{equation*}
Thus the Jensen's inequality becomes%
\begin{equation*}
\mathbb{E}^{g}[h(X)|\mathcal{F}_{t}]\geq h(\mathbb{E}^{g}[X|\mathcal{F}%
_{t}]).
\end{equation*}
In particular, when $t=0$, $\mathbb{E}^{g}[h(X)]\geq h(\mathbb{E}^{g}[X]).$
\end{remark}

Before the proof of Theorem \ref{thm-g-convexity}, we prove the following
lemma.

\begin{lemma}
\label{prop-g-conv-in-conv} Assume that $g$ satisfies (\ref{eqn2.2}) and an $%
C^2$-function $h$ satisfies (\ref{g-convexity-differentiable}). Then $h$ is
convex in the usual sense.
\end{lemma}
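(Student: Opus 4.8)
The plan is to show that the pointwise inequality $\mathcal{L}_g^{t,y,z}h\geq 0$ forces $h''\geq 0$ everywhere, which is equivalent to convexity of $h$ in the usual sense. The key observation is that the differential operator $\mathcal{L}_g^{t,y,z}h$ has a term $\tfrac12 h''(y)|z|^2$ that grows quadratically in $z$, whereas the remaining contribution $g(t,h(y),h'(y)z)-h'(y)g(t,y,z)$ grows at most \emph{linearly} in $z$ because of the Lipschitz assumption (\ref{eqn2.2})-(a) on $g$. Concretely, $|g(t,h(y),h'(y)z)-g(t,0,0)|\leq \mu(|h(y)|+|h'(y)||z|)$ and $|h'(y)g(t,y,z)-h'(y)g(t,0,0)|\leq \mu|h'(y)|(|y|+|z|)$, so for each fixed $y$ there is a constant $C_y$ (depending on $y$, on $h(y), h'(y)$, and on $\|g(\cdot,0,0)\|_\infty$, but not on $z$) with
\begin{equation*}
\abs{g(t,h(y),h'(y)z)-h'(y)g(t,y,z)}\leq C_y(1+\abs{z}),\qquad dP\times dt\text{-a.s.}
\end{equation*}

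First I would fix an arbitrary $y\in\Real$ and a unit vector $e\in\Real^d$, and apply (\ref{g-convexity-differentiable}) with $z=\lambda e$ for $\lambda>0$. This gives, on a set of full $dP\times dt$-measure,
\begin{equation*}
\frac{1}{2}h''(y)\lambda^2 \geq h'(y)g(t,y,\lambda e)-g(t,h(y),\lambda h'(y)e)\geq -C_y(1+\lambda).
\end{equation*}
Dividing by $\lambda^2$ and letting $\lambda\to\infty$ yields $h''(y)\geq 0$. One has to be slightly careful that the exceptional $dP\times dt$-null set may depend on $\lambda$ and on $e$, but this is handled routinely: take $\lambda$ ranging over the positive rationals and $e$ over a countable dense subset of the unit sphere, intersect the countably many full-measure sets, and use continuity of $h''$ and of $z\mapsto g(t,y,z)$ to pass to all real $\lambda$ and all $e$. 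Since at least one $(t,\omega)$ survives in this full-measure set (the set is nonempty), the numerical inequality $h''(y)\geq 0$ holds.

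Having established $h''(y)\geq 0$ for every $y\in\Real$, convexity of $h$ in the usual sense is immediate (a $C^2$ function with nonnegative second derivative is convex). For the $g$-concave case one runs the identical argument with all inequalities reversed to conclude $h''\leq 0$, hence $h$ is concave. The main obstacle — really the only subtlety — is the measurability/null-set bookkeeping: making sure that the single pointwise inequality $h''(y)\geq 0$ can be extracted from a family of almost-sure inequalities indexed by the continuous parameter $z$. The quadratic-versus-linear growth comparison is the heart of the matter and is genuinely short once that bookkeeping is in place.
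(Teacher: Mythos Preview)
Your proposal is correct and follows essentially the same approach as the paper: both exploit that the Lipschitz condition on $g$ makes the non-quadratic part of $\mathcal{L}_g^{t,y,z}h$ grow at most linearly in $|z|$, so nonnegativity of $\mathcal{L}_g^{t,y,z}h$ for all $z$ forces $h''(y)\geq 0$. The paper integrates in $t$ to absorb the $|g(t,0,0)|$ term while you invoke the $L^\infty$ bound from (\ref{eqn2.2})-(b) directly, and the paper simply asserts that a quadratic in $|z|$ which is nonnegative for all $z$ must have nonnegative leading coefficient rather than dividing by $\lambda^2$ and passing to the limit, but these are cosmetic differences.
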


\begin{proof}
For each $y_{0}\in \mathbb{R}$, one has,
\begin{align*}
& 0\leq \frac{1}{2}h^{\prime \prime }(y_{0})\left \vert z\right \vert
^{2}+g(t,h(y_{0}),h^{\prime }(y_{0})z)-h^{\prime }(y_{0})g(t,y_{0},z) \\
& =\frac{1}{2}h^{\prime \prime }(y_{0})\left \vert z\right \vert
^{2}+(g(t,h(y_{0}),h^{\prime }(y_{0})z)-g(t,0,0)) \\
& \quad +h^{\prime }(y_{0})(g(t,0,0)-g(t,y_{0},z))+g(t,0,0)-h^{\prime
}(y_{0})(g(t,0,0) \\
& \leq \frac{1}{2}h^{\prime \prime }(y_{0})\left \vert z\right \vert
^{2}+2C_{1}\left \vert h^{\prime }(y_{0})\right \vert \left \vert z\right
\vert +C_{1}+C_{1}\left \vert g(t,0,0)\right \vert
\end{align*}%
where $C_{1}$ only depends on $\mu $ and $y_{0}$. Thus
\begin{equation*}
0\leq \frac{1}{2}h^{\prime \prime }(y_{0})\left \vert z\right \vert
^{2}T+2C_{2}\left \vert h^{\prime }(y_{0})\right \vert \left \vert z\right
\vert T+C_{2},\  \  \  \forall z\in \mathbb{R}^{d}.
\end{equation*}%
where $C_{2}$ only depends on $C_{1}$ and $M=E\left[ \int_{0}^{T}\left \vert
g(t,0,0)\right \vert ^{2}\,dt\right] $. Thus $h^{\prime \prime }(y_{0})$
must be non-negative.
\end{proof}

\begin{lemma}
\label{bound}Let $h\in C(\mathbb{R})$ be a convex function. If for each $%
X\in L^{\infty }(\mathcal{F}_{T})$ we have
\begin{equation}
\mathbb{E}_{t,T}^{g}[h(X)]\geq h(\mathbb{E}_{t,T}^{g}[X]),\  \ a.s.,\  \
\forall t\in \lbrack 0,T],  \label{Jens}
\end{equation}%
then this relation also holds for each $X\in L^{2}(\mathcal{F}_{T})$ such
that $h(X)\in L^{2}(\mathcal{F}_{T}).$
\end{lemma}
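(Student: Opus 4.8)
The plan is to pass from the hypothesis, which gives the generalized Jensen inequality only for bounded terminal conditions, to the general $L^2$ setting by a truncation-and-approximation argument, using the $L^2$-continuity of $\mathbb{E}^g$ recorded in the remark after Lemma \ref{lem 4.1}. Fix $X\in L^2(\mathcal{F}_T)$ with $h(X)\in L^2(\mathcal{F}_T)$, and set $X_n:=(-n)\vee(X\wedge n)$, the truncation of $X$ to $[-n,n]$, so that $X_n\in L^\infty(\mathcal{F}_T)$ and $X_n\to X$ in $L^2$. By hypothesis, for each $n$ and each $t$ we have $\mathbb{E}_{t,T}^{g}[h(X_n)]\geq h(\mathbb{E}_{t,T}^{g}[X_n])$ a.s. The goal is to take $n\to\infty$ on both sides.

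For the left-hand side I would first check that $h(X_n)\to h(X)$ in $L^2(\mathcal{F}_T)$: since $h$ is continuous one has $h(X_n)\to h(X)$ a.s., and the needed uniform integrability of $|h(X_n)|^2$ follows from convexity of $h$, which forces $|h(X_n)|\le |h(X)|+|h(-n_0)|+|h(n_0)|$-type domination once $n\ge n_0$ — more carefully, a convex function on $\mathbb{R}$ is bounded between its values at $\pm n$ on $[-n,n]$ only from above, so I would instead bound $|h(X_n)|\le \max(|h(X)|,|h(0)|, C(1+|X|))$ using the fact that a convex $h$ lies below the chord through $(0,h(0))$ and $(X,h(X))$ on the segment, together with a lower affine support; in any case $|h(X_n)|\le |h(X)|+|h(0)|+\text{(affine in }|X|)$, which is in $L^2$. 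Then by the $L^2$-continuity of $\mathbb{E}^g$, $\mathbb{E}_{t,T}^{g}[h(X_n)]\to \mathbb{E}_{t,T}^{g}[h(X)]$ in $L^2(\mathcal{F}_t)$, hence along a subsequence a.s. For the right-hand side, $\mathbb{E}_{t,T}^{g}[X_n]\to \mathbb{E}_{t,T}^{g}[X]$ in $L^2(\mathcal{F}_t)$ again by $L^2$-continuity, so along a further subsequence a.s., and then continuity of $h$ gives $h(\mathbb{E}_{t,T}^{g}[X_n])\to h(\mathbb{E}_{t,T}^{g}[X])$ a.s. Passing to the limit in the inequality along this common subsequence yields $\mathbb{E}_{t,T}^{g}[h(X)]\geq h(\mathbb{E}_{t,T}^{g}[X])$ a.s. for each fixed $t$; a standard argument (the inequality holds for a countable dense set of $t$ and both sides have RCLL versions, or one simply fixes $t$ at the outset) gives it for all $t\in[0,T]$.

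The main obstacle I anticipate is the domination needed to upgrade a.s.\ convergence of $h(X_n)$ to $L^2$ convergence — i.e.\ producing an honest $L^2$ dominating function for $|h(X_n)|$ uniformly in $n$. The one-sided nature of convexity means $h$ need not be bounded on $[-n,n]$ by anything controlled by $h(\pm n)$ from below in a way that survives $n\to\infty$, so the clean estimate is: pick any subgradient line $\ell(y)=ay+b\le h(y)$ at $0$, and for the upper bound use that on the segment $[0,X_n]$ (say $X_n\ge 0$) convexity gives $h(X_n)\le \frac{X_n}{X}h(X)+(1-\frac{X_n}{X})h(0)$ when $X>0$, hence $|h(X_n)|\le |h(X)|+|h(0)|+|a||X|+|b|$, which is in $L^2$; the case $X_n<0$ is symmetric and the case $X=0$ is trivial. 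Once this domination is in hand, the dominated convergence theorem and the $L^2$-continuity of $g$-expectation do the rest routinely. A secondary minor point is bookkeeping with subsequences so that both sides converge a.s.\ simultaneously; this is handled by extracting nested subsequences as indicated above.
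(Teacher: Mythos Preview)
Your proof is correct, but it follows a different route than the paper's. The paper avoids the domination issue entirely by splitting into two cases according to the shape of $h$: either $h$ is monotone, or $h$ attains a global minimum at some $\bar{y}$. In the monotone case they use a two-parameter truncation $(-n)\vee X\wedge m$ and pass to the limit first in $m$, then in $n$, exploiting that each limit is \emph{monotone} in $L^{2}$; in the second case they truncate symmetrically about $\bar{y}$ so that $h((-m+\bar{y})\vee X\wedge(m+\bar{y}))$ increases to $h(X)$. In both cases monotone convergence handles the $L^{2}$ passage without any explicit dominating function.

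Your approach instead produces a uniform $L^{2}$ majorant for $|h(X_n)|$ directly from convexity: the upper bound $h(X_n)\le |h(0)|+|h(X)|$ follows because $X_n$ lies between $0$ and $X$, and the lower bound comes from any affine support line. This is a clean one-shot argument that sidesteps the case split; the paper's version is perhaps more transparent but requires recognizing the dichotomy for convex functions on $\mathbb{R}$. Either way the final step---$L^{2}$-continuity of $\mathbb{E}^{g}$ from Lemma~\ref{lem 4.1}---is the same. Your remarks about subsequences and about handling all $t$ simultaneously are fine but unnecessary: the statement is for each fixed $t$, and the paper simply works at a fixed $t$ throughout.
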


\begin{proof}
We need to consider two cases: (a) $h$ is a monotone function; (b) there
exists a $\bar{y}\in \mathbb{R}$ such that $h(y)\geq h(\bar{y})$. For case
(a), we have
\begin{equation*}
\mathbb{E}_{t,T}^{g}[h((-n)\vee X\wedge m)]\geq h(\mathbb{E}%
_{t,T}^{g}[(-n)\vee X\wedge m]),\ a.s.\ m,n=1,2,\cdots .
\end{equation*}%
Since, for each fixed $n$, the sequence $\{h((-n)\vee X\wedge
m)\}_{m=1}^{\infty }$ (resp. $\{(-n)\vee X\wedge m\}_{m=1}^{\infty }$)
monotonically converges to $h((-n)\vee X)$ (resp. $(-n)\vee X$) in $L^{2}(%
\mathcal{F}_{T})$ as $m\rightarrow \infty $. We then can pass limit on the
both sides of the above inequality and obtain
\begin{equation*}
\mathbb{E}_{t,T}^{g}[h((-n)\vee X)]\geq h(\mathbb{E}_{t,T}^{g}[(-n)\vee
X]),\ a.s.\ n=1,2,\cdots .
\end{equation*}%
Similarly, when $n\uparrow \infty $, the sequence $\{h((-n)\vee
X)\}_{n=1}^{\infty }$ (resp. $\{(-n)\vee X\}_{n=1}^{\infty }$) monotonically
converges to $h(X)$ (resp. $X$) in $L^{2}(\mathcal{F}_{T})$. Thus we can
pass limit on the both sides of the above inequality and obtain (\ref{Jens}%
). For case (b), we observe that then $h$ increases on $[\bar{y},\infty )$
and decreases on $(-\infty ,\bar{y}]$ thus, as $m\rightarrow \infty $, $%
h((-m+\bar{y})\vee X\wedge (m+\bar{y}))$ increasingly converges to $h(X)$ in
$L^{2}(\mathcal{F}_{T})$. We then pass limit on both sides of%
\begin{eqnarray*}
\mathbb{E}_{t,T}^{g}[h((-m+\bar{y})\vee X\wedge (m+\bar{y}))] &\geq &h(%
\mathbb{E}_{t,T}^{g}[(-m+\bar{y})\vee X\wedge (m+\bar{y})]),\  \\
a.s.\ m &=&1,2,\cdots .
\end{eqnarray*}%
and thus obtain (\ref{Jens}).
\end{proof}

\begin{proof}[Proof of Theorem \protect \ref{thm-g-convexity}]
(ii) $\Longrightarrow $(i): We first consider the case where $X$ is bounded.
The corresponding solution $Y_{\cdot }$ of $(g,T,X)$ is also bounded since $%
X $ and $g(\cdot ,0,0)$ are bounded. We now apply It\^{o}'s formula to $%
h(Y_{t})$:
\begin{align*}
-dh(Y_{t})& =[-\frac{1}{2}h^{\prime \prime }(Y_{t})|Z_{t}|^{2}+h^{\prime
}(Y_{t})g(t,Y_{t},Z_{t})]dt-h^{\prime }(Y_{t})Z_{t}dW_{t} \\
& =[g(t,h(Y_{t}),h^{\prime }(Y_{t})Z_{t})+\psi _{t}]dt-h^{\prime
}(Y_{t})Z_{t}dW_{t},
\end{align*}%
where
\begin{equation*}
\psi _{t}=-\frac{1}{2}h^{\prime \prime
}(Y_{t})|Z_{t}|^{2}-g(t,h(Y_{t}),h^{\prime }(Y_{t})Z_{t})+h^{\prime
}(Y_{t})g(t,Y_{t},Z_{t}).
\end{equation*}%
From (\ref{g-convexity-differentiable}), it follows that $\psi
_{t}\leq 0$ and thus $h(Y_{\cdot })$ is a $g$-subsolution. By
comparison theorem of BSDE
it follows that%
\begin{equation*}
\mathbb{E}_{t,T}^{g}[h(X)]=\mathbb{E}_{t,T}^{g}[h(Y_{T})]\geq h(Y_{t})=h(%
\mathbb{E}_{t,T}^{g}[X]),\ a.s.,\  \  \forall X\in L^{\infty }(\mathcal{F}%
_{T}).
\end{equation*}%
On the other hand, from Lemma \ref{prop-g-conv-in-conv} $h$ is
convex. This with Lemma \ref{bound} yields (i).

(i) $\Longrightarrow $(ii): We only give a proof for the situation where $%
g(\cdot ,y,z)$ is a continuous process on $[0,T]$. For each fixed $%
(t,y,z)\in \lbrack 0,T]\times \mathbb{R\times R}^{d}$, we consider the
following SDE
\begin{equation*}
-dY_{s}^{t,y,z}=1_{[t,T]}(s)[g(s,Y_{s}^{t,y,z},z)ds-zdW_{s}],\ Y_{0}=y.\
\end{equation*}%
We apply It\^{o}'s formula on $[t,T]$:
\begin{equation*}
-dh(Y_{s}^{t,y,z})=[-\frac{1}{2}h^{\prime \prime
}(Y_{s}^{t,y,z})|z|^{2}+h^{\prime
}(Y_{s}^{t,y,z})g(s,Y_{s}^{t,y,z},z)]ds-h^{\prime }(Y_{s}^{t,y,z})zdW_{s}
\end{equation*}%
For a large number $m>0$, let $\tau _{m}=\inf \{s\geq
t:|Y_{s}^{t,y,z}-y|=m\} $. It is that $Y_{\cdot }^{t,y,z}$ is bounded on $%
[t,\tau _{m}]$. By (i),
\begin{equation*}
\mathbb{E}_{r\wedge \tau _{m},s\wedge \tau _{m}}^{g}[h(Y_{s\wedge \tau _{m}}^{t,y,z})]\geq h(%
\mathbb{E}_{r\wedge \tau _{m},s\wedge \tau _{m}}^{g}[Y_{s\wedge \tau
_{m}}^{t,y,z}])=h(Y_{r\wedge \tau _{m}}^{t,y,z}),\ P-a.s.,\forall r\in[%
t,\tau _{m}].
\end{equation*}%
That is, $h(Y_{s}^{t,y,z})$ is a $g$--submartingale on $[t,\tau _{m}]$. By
the decomposition theorem of $g$-submartingale (Proposition \ref{prop4.1}),
it follows that there exist an increasing process $(A_{s})_{s\geq t}$ such
that
\begin{equation*}
h(Y_{s\wedge \tau _{m}}^{t,y,z})=h(y)-\int_{t}^{s\wedge \tau
_{m}}g(r,h(Y_{r}^{t,y,z}),Z_{r})\,dr+A_{s\wedge \tau
_{m}}+\int_{t}^{s\wedge \tau _{m}}Z_{r}\,dW_{r}.
\end{equation*}%
This with
\begin{align*}
h(Y_{s\wedge \tau _{m}}^{t,y,z})& =h(y)-\int_{t}^{s\wedge \tau
_{m}}[\frac{1}{2}h^{\prime \prime }(Y_{r}^{t,y,z})|z|^{2}-h^{\prime
}(Y_{r}^{t,y,z})g(r,Y_{r}^{t,y,z},z)]dr \\
& \quad +\int_{t}^{s\wedge \tau _{m}}h^{\prime
}(Y_{r}^{t,y,z})z\,dW_{r}
\end{align*}%
yields $Z_{s}\equiv h^{\prime }(Y_{s}^{t,y,z})z$ and
\begin{equation*}
-\frac{1}{2}h^{\prime \prime }(Y_{s}^{t,y,z})|z|^{2}+h^{\prime
}(Y_{s}^{t,y,z})g(s,Y_{s}^{t,y,z},z)\leq g(s,h(Y_{s}^{t,y,z}),h^{\prime
}(Y_{s}^{t,y,z})z)
\end{equation*}%
on $[t,\tau _{m}]$. Since $g(\cdot ,y,z)$ is a continuous process (otherwise
a technique in the proof of Theorem 8.1 in \cite{peng2005b} is needed), as $%
s=t$, we can obtain (\ref{g-convexity-differentiable}). The proof is
complete.
\end{proof}

\begin{example}
For the case $g=\left \langle \xi _{t},z\right \rangle $, the $g$%
-expectation corresponds to the classical linear expectation (Girsanov
transformation). Theorem \ref{thm-g-convexity}, becomes the classical
results: $h\in C^{2}(\mathbb{R})$ is $g$-convex if and only if $h^{\prime
\prime }(y)\geq 0$.
\end{example}

\begin{remark}
\label{rem-g-concave-is-quasiconcave} A $g$-concave function is concave in
the usual sense. Its proof is similar to that of Corollary \ref%
{thm-g-convex-convex}.
\end{remark}

From Theorem \ref{thm-g-convexity} we can also derive the following result
of \cite{chenKJ2003} and its improved version \cite{hu2005}.

\begin{proposition}
\label{corhu} Let $g$ satisfy (\ref{eqn2.2}). Then the following two
statements are equivalent:

(i) For each convex function $h$, and each $X\in L^{2}(\mathcal{F}_{T})$
such that $h(X)\in L^{2}(\mathcal{F}_{T})$,
\begin{equation*}
\mathbb{E}_{t,T}^{g}[h(X)]\geq h(\mathbb{E}_{t,T}^{g}[X]),\quad \forall
0\leq t\leq T;
\end{equation*}

(ii) $g$ is independent of $y$, and is super-homogeneous in $z$, i.e., for
any $\lambda \in \mathbb{R}$, $g(t,\lambda z)\geq \lambda g(t,z)$.
\end{proposition}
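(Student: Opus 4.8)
The plan is to extract both implications from Theorem \ref{thm-g-convexity}, which for a $C^{2}$ function reduces $g$-convexity to the pointwise sign condition $\mathcal{L}_{g}^{t,y,z}h\geq 0$.

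For \textbf{(ii) $\Rightarrow$ (i)} I would first treat $C^{2}$ convex functions. Assume $g$ is independent of $y$ — write it $g(t,z)$ — and super-homogeneous in $z$, i.e.\ $g(t,\lambda z)\geq\lambda g(t,z)$ for all $\lambda\in\Real$. Then for any $h\in C^{2}(\Real)$ with $h''\geq 0$ and any $(t,y,z)$,
\[
\mathcal{L}_{g}^{t,y,z}h=\frac{1}{2}h''(y)|z|^{2}+g\big(t,h'(y)z\big)-h'(y)g(t,z)\geq\frac{1}{2}h''(y)|z|^{2}\geq 0,
\]
where the first inequality is super-homogeneity applied with $\lambda=h'(y)$; hence Theorem \ref{thm-g-convexity} makes every such $h$ $g$-convex. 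For a general continuous convex $h$ I would mollify: with $\rho_{\eps}$ a standard mollifier, $h_{\eps}:=h\ast\rho_{\eps}$ is $C^{\infty}$, convex, and converges to $h$ uniformly on compact sets. Given $X\in L^{\infty}(\F_{T})$, the $C^{2}$ case gives $\mathbb{E}_{t,T}^{g}[h_{\eps}(X)]\geq h_{\eps}(\mathbb{E}_{t,T}^{g}[X])$; letting $\eps\to 0$, the left-hand side converges by the $L^{2}$-continuity of $\mathbb{E}^{g}$ (since $h_{\eps}\to h$ uniformly on the essential range of $X$, $h_{\eps}(X)\to h(X)$ in $L^{2}$), and the right-hand side converges since $h_{\eps}\to h$ locally uniformly and $\mathbb{E}_{t,T}^{g}[X]$ is a.s.\ finite. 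This yields Jensen's inequality for all bounded $X$, and Lemma \ref{bound} then extends it to every $X\in L^{2}(\F_{T})$ with $h(X)\in L^{2}(\F_{T})$, which is (i).

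For \textbf{(i) $\Rightarrow$ (ii)} I would test (i) against the affine functions $h(y)=\lambda y+b$, $\lambda,b\in\Real$. These are convex, hence $g$-convex, and $C^{2}$ with $h''\equiv 0$, so Theorem \ref{thm-g-convexity} gives, for each fixed $(\lambda,b,y,z)$,
\[
g(t,\lambda y+b,\lambda z)-\lambda g(t,y,z)\geq 0,\qquad dP\times dt\text{-a.s.}
\]
Taking $\lambda=1$ and letting $b,y$ range over $\mathbb{Q}$ and $z$ over $\mathbb{Q}^{d}$, the countably many exceptional sets together form a $dP\times dt$-null set outside which $g(t,y',z)\geq g(t,y,z)$ for all rational $y,y',z$; interchanging $y\leftrightarrow y'$ forces equality, and the Lipschitz continuity of $g$ in $(y,z)$ (see (\ref{eqn2.2})) propagates this to all real arguments — so $g$ is independent of $y$. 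Substituting this back, the displayed inequality reads $g(t,\lambda z)\geq\lambda g(t,z)$; letting $\lambda$ run over $\mathbb{Q}$ and $z$ over $\mathbb{Q}^{d}$ and again invoking continuity, this holds for all $\lambda\in\Real$, $z\in\Real^{d}$ ($dP\times dt$-a.s.), which is super-homogeneity (the case $\lambda=0$ already being the constant-function instance). This is (ii).

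The step I expect to be the main obstacle is the measure-theoretic bookkeeping in (i) $\Rightarrow$ (ii): Theorem \ref{thm-g-convexity} supplies, for each fixed $(\lambda,b,y,z)$, an inequality valid only off its own $dP\times dt$-null set, and one must pass to a single inequality holding simultaneously for all parameters. This is exactly where the countability of $\mathbb{Q}^{1+d}$ (and of the chosen sequences of $\lambda$'s and $b$'s) combined with the Lipschitz — hence continuous — dependence of $g$ on $(y,z)$ is indispensable. A secondary technical wrinkle is that the mollification argument in (ii) $\Rightarrow$ (i) handles directly only bounded $X$, where $h_{\eps}(X)$ automatically lies in $L^{2}$; passing to general $X$ is precisely the role of Lemma \ref{bound}.
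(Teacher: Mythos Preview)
Your proof is correct and follows essentially the same route as the paper: both directions hinge on Theorem \ref{thm-g-convexity}, and the implication (i) $\Rightarrow$ (ii) is obtained in both cases by testing with the affine functions $h(y)=ay+b$. The paper is terser---for (ii) $\Rightarrow$ (i) it simply invokes (\ref{g-convexity-differentiable}) in the $C^{2}$ case and waves at \cite{chenKJ2003} for the general convex case, and for (i) $\Rightarrow$ (ii) it omits the null-set bookkeeping---whereas you supply a mollification argument and the countable-parameter/Lipschitz reduction explicitly; these are exactly the details one would want to fill in, and your treatment of them is sound.
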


\begin{proof}
(ii) $\Rightarrow $ (i): In the case when $h\in C^{2}$, this can be proved
by (\ref{g-convexity-differentiable}). For general situation we can apply
the same technique in the proof of (\ref{g-convexity-differentiable}) (see
\cite{chenKJ2003}).

(i) $\Rightarrow $ (ii): For each given $a,b\in \mathbb{R}$, take $h(x)=ax+b$%
. Obviously it is a convex function and in $C^{2}(\mathbb{R})$. Thus the
inequality (\ref{g-convexity-differentiable}) yields $g(t,ay+b,az)-ag(t,y,z)%
\geq 0,\  \ dP\times dt\text{--a.s.}$ Since $a,b$ can be chosen arbitrarily, $%
g$ must be independent of $y$ and super-homogeneous in $z$.
\end{proof}

\begin{corollary}
\label{cor-g-conv-ind-z} Let $g$ satisfy (\ref{eqn2.2}) and be independent
of $z$, and $h\in C^{2}(\mathbb{R})$. Then the following two statements
equivalent:

(i) $h$ is $g$-convex;

(ii) $h$ is convex ($h^{\prime \prime }(y)\geq 0$ for each $y$) and
satisfies
\begin{equation*}
\forall y,\quad g(t,h(y))-h^{\prime }(y)g(t,y)\geq 0,\ dP\times dt\text{%
--a.s.}
\end{equation*}
\end{corollary}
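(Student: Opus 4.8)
The plan is to derive Corollary~\ref{cor-g-conv-ind-z} as a direct specialization of Theorem~\ref{thm-g-convexity} to the case where $g=g(t,y)$ does not depend on $z$. First I would write out the differential operator $\mathcal{L}_g^{t,y,z}h$ in this situation: since $g$ has no $z$-argument, we have $g(t,h(y),h'(y)z)=g(t,h(y))$ and $g(t,y,z)=g(t,y)$, so
\begin{equation*}
\mathcal{L}_g^{t,y,z}h=\frac{1}{2}h''(y)|z|^2+g(t,h(y))-h'(y)g(t,y).
\end{equation*}
By Theorem~\ref{thm-g-convexity}, $h$ is $g$-convex if and only if this expression is $\geq 0$, $dP\times dt$-a.s., for every $y\in\mathbb{R}$ and every $z\in\mathbb{R}^d$.

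The heart of the argument is then to show that the quantifier "$\forall z\in\mathbb{R}^d$" decouples into the two conditions in (ii). For the direction (i)$\Rightarrow$(ii): taking $z=0$ in the inequality $\frac12 h''(y)|z|^2+g(t,h(y))-h'(y)g(t,y)\geq 0$ immediately gives $g(t,h(y))-h'(y)g(t,y)\geq 0$ for all $y$, $dP\times dt$-a.s.; and convexity of $h$ (i.e. $h''(y)\geq 0$) follows directly from Lemma~\ref{prop-g-conv-in-conv}, which already establishes that any $C^2$ function satisfying (\ref{g-convexity-differentiable}) is convex in the usual sense. Conversely, for (ii)$\Rightarrow$(i), if $h''(y)\geq 0$ and $g(t,h(y))-h'(y)g(t,y)\geq 0$, then since $|z|^2\geq 0$ the full expression $\frac12 h''(y)|z|^2+g(t,h(y))-h'(y)g(t,y)$ is a sum of two nonnegative terms, hence $\geq 0$ for every $z$; Theorem~\ref{thm-g-convexity} then gives $g$-convexity.

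I do not anticipate a serious obstacle here: the $z$-independence of $g$ makes the term $\frac12 h''(y)|z|^2$ the \emph{only} part of $\mathcal{L}_g^{t,y,z}h$ that depends on $z$, so the equivalence is essentially a matter of separately controlling the coefficient of $|z|^2$ and the constant term. The one point requiring a little care is the logical handling of the "$dP\times dt$-a.s." qualifier together with the "$\forall z$" quantifier — strictly one wants a single null set outside of which the inequality holds for all $y$ and all $z$ simultaneously — but this is routine: it suffices to take the inequality on a countable dense set of $z$'s (or just $z=0$ and a sequence $|z|\to\infty$) and use continuity of $g$ in its arguments, exactly as is implicitly done in the proof of Lemma~\ref{prop-g-conv-in-conv}. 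So the corollary follows by simply reading off Theorem~\ref{thm-g-convexity} in this special case, invoking Lemma~\ref{prop-g-conv-in-conv} for the convexity half.
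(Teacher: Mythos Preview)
Your proposal is correct and is exactly the intended argument: the paper states this result as an immediate corollary of Theorem~\ref{thm-g-convexity} without giving a separate proof, and the specialization you describe (writing out $\mathcal{L}_g^{t,y,z}h$ when $g$ is $z$-independent, then setting $z=0$ for one condition and invoking Lemma~\ref{prop-g-conv-in-conv} or letting $|z|\to\infty$ for convexity) is precisely how one reads it off.
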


\begin{corollary}
\label{cor-g-conv-ind-y} Let $g$ satisfy (\ref{eqn2.2}) and be independent
of $y$, and let $h\in C^{2}(\mathbb{R})$ be $g$-convex. Moreover if there
exist a set $\Gamma \in \Omega \times \lbrack 0,T]$ with positive measure,
in which $g(t,0)>0$ (resp. $g(t,0)<0$), then $h^{\prime }(y)\leq 1\ $(resp. $%
h^{\prime }(y)\geq 1$)$.$
\end{corollary}

A simple and fundamentally important result in stochastic analysis is that,
for each martingale $X$ and for each convex function $h$ such that $h(X)\in
L^{1}$, the process $h(X)$ is a submartingale. For $g$-expectation, we have:

\begin{theorem}
\label{thm-g-submartingale} If $(Y_{t})_{t\in \lbrack 0,T]}$ is a $g$%
-martingale, and $h$ is a $g$-convex function (resp. $g$-concave function, $%
g $-affine function), then $(h(Y_{t}))_{t\in \lbrack 0,T]}$ is a $g$%
-submartingale (resp. $g$-supermartingale, $g$-martingale) provided $%
h(Y_{t})\in L^{2}(\mathcal{F}_{t})$, $t\in \lbrack 0,T]$.
\end{theorem}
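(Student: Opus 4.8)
\textbf{Proof proposal for Theorem \ref{thm-g-submartingale}.}
The plan is to reduce the statement to the definition of a $g$-martingale together with the (conditional) Jensen inequality that \emph{defines} $g$-convexity. First I would recall that $(Y_t)_{t\in[0,T]}$ being a $g$-martingale means $\mathbb{E}_{s,t}^{g}[Y_t]=Y_s$ a.s.\ for all $0\le s\le t\le T$. Fix such $s\le t$. Since $Y_t\in L^2(\mathcal{F}_t)$ and, by hypothesis, $h(Y_t)\in L^2(\mathcal{F}_t)$, the pair $(X,h(X))$ with $X:=Y_t$ is an admissible pair in Definition \ref{defng-convexity}, now applied on the time interval $[s,t]$ rather than $[0,T]$. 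Applying $g$-convexity of $h$ to $X=Y_t$ gives
\begin{equation*}
h(\mathbb{E}_{s,t}^{g}[Y_t])\le \mathbb{E}_{s,t}^{g}[h(Y_t)]\quad P\text{-a.s.}
\end{equation*}
Using the martingale identity $\mathbb{E}_{s,t}^{g}[Y_t]=Y_s$ on the left-hand side yields $h(Y_s)\le \mathbb{E}_{s,t}^{g}[h(Y_t)]$ a.s., which is precisely the statement that $(h(Y_t))_{t\in[0,T]}$ is a $g$-submartingale. The $g$-concave case is identical with the inequality reversed (giving a $g$-supermartingale), and the $g$-affine case follows by combining the two, giving equality and hence a $g$-martingale.

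Two small points need to be checked so the argument is airtight. First, one must make sure $h(Y_t)$ actually lies in the domain $L^2(\mathcal{F}_t)$ on which $\mathbb{E}_{s,t}^{g}[\cdot]$ is defined — this is exactly the integrability hypothesis $h(Y_t)\in L^2(\mathcal{F}_t)$ placed in the statement, so nothing further is required. Second, strictly speaking Definition \ref{defng-convexity} phrases $g$-convexity via the terminal time $T$; but the defining inequality $h(\mathbb{E}_{r,\theta}^{g}[\xi])\le \mathbb{E}_{r,\theta}^{g}[h(\xi)]$ holds for any pair of times $r\le\theta$ (apply the definition on the horizon $\theta$ in place of $T$, noting that the $g$-expectation is consistently defined over all finite horizons by Definition \ref{defn2.1}), so the use of $\mathbb{E}_{s,t}^{g}$ is legitimate.

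I do not expect any genuine obstacle here: the theorem is essentially a restatement, at the level of processes, of the conditional Jensen inequality built into the definition of $g$-convexity, exactly mirroring the classical fact that a convex function of a martingale is a submartingale. The only thing that could be called the "main step" is the bookkeeping observation that the defining inequality of $g$-convexity is valid between arbitrary times $s\le t$, not just from $T$ down to $t$; once that is granted, the proof is a one-line substitution of the $g$-martingale property into Jensen's inequality.
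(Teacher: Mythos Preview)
Your proof is correct and follows exactly the same approach as the paper's own proof, which is a one-line application of the $g$-convexity inequality to $X=Y_t$ combined with the $g$-martingale identity $\mathbb{E}_{s,t}^{g}[Y_t]=Y_s$. Your additional remarks on the integrability hypothesis and on using the defining inequality over an arbitrary horizon $[s,t]$ are sound and in fact make explicit points the paper's proof leaves implicit.
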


\begin{proof}
Let $Y_{t}$ be a $g$-martingale and $h$ a $g$-convex function, then
\begin{equation*}
\mathbb{E}_{s,t}[h(Y_{t})|\mathcal{F}_{s}]\geq h(\mathbb{E}_{s,t}[Y_{t}|%
\mathcal{F}_{s}])=h(Y_{s}),
\end{equation*}%
for any $0\leq s<t\leq T$, as required. The proofs of other cases are
similar.
\end{proof}

Moreover its inverse also holds, namely,

\begin{theorem}
\label{thm-g-submartingale-inverse} Let $g$ satisfy (\ref{eqn2.2}).
If for each $g$-martingale $(Y_{t})_{t\in \lbrack 0,T]}$,
$(h(Y_{t}))_{t\in \lbrack 0,T]}$ is a $g$-submartingale (resp.
$g$-submartingale, $g$-martingale), then $h$ is a $g$-convex (resp.
$g$-concave ,$g$-affine) function.
\end{theorem}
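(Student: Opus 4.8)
The plan is to prove the converse of Theorem~\ref{thm-g-submartingale} by showing that the hypothesis "$h(Y)$ is a $g$-submartingale for every $g$-martingale $Y$" forces the Jensen inequality \eqref{eqnJ} directly, and then (if one wants the $C^2$ characterization) to reduce to Theorem~\ref{thm-g-convexity}. The key observation is that for any fixed $X\in L^2(\mathcal{F}_T)$ the process $Y_t:=\mathbb{E}_{t,T}^g[X]$ is by construction a $g$-martingale: indeed $Y_T=X$ and, by the time-consistency property (A3) in Proposition~\ref{prop2.4}, $\mathbb{E}_{s,t}^g[Y_t]=\mathbb{E}_{s,t}^g[\mathbb{E}_{t,T}^g[X]]=\mathbb{E}_{s,T}^g[X]=Y_s$ for $s\le t$. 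Moreover $(Y_t)\in\mathcal{S}_{\mathcal{F}}^2(0,T;\mathbb{R})\subset\mathcal{D}_{\mathcal{F}}^2(0,T)$, so it is an admissible $g$-martingale.

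Now suppose $h$ is such that $h(Y)$ is a $g$-submartingale whenever $Y$ is a $g$-martingale, and let $X\in L^2(\mathcal{F}_T)$ with $h(X)\in L^2(\mathcal{F}_T)$. Applying the hypothesis to $Y_t=\mathbb{E}_{t,T}^g[X]$, the process $h(Y_t)$ is a $g$-submartingale, so for any $0\le t\le T$,
\begin{equation*}
\mathbb{E}_{t,T}^g[h(X)]=\mathbb{E}_{t,T}^g[h(Y_T)]\ge h(Y_t)=h(\mathbb{E}_{t,T}^g[X]),\quad P\text{-a.s.}
\end{equation*}
using $\mathbb{E}_{t,T}^g[h(X)]$ is meaningful by the integrability hypothesis on $h(X)$. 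This is exactly \eqref{eqnJ}, so $h$ is $g$-convex by Definition~\ref{defng-convexity}. The $g$-concave case is identical with the inequality reversed (hypothesis: $h(Y)$ a $g$-supermartingale), and the $g$-affine case follows by combining the two, since a process that is both a $g$-submartingale and a $g$-supermartingale is a $g$-martingale. One should note the apparent typo "resp. $g$-submartingale" in the statement, which should read "$g$-supermartingale" for the concave case.

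The only genuine subtlety — and the step I expect to need the most care — is the measurability/integrability bookkeeping: one must check that $h(Y_t)$ actually lies in $\mathcal{D}_{\mathcal{F}}^2(0,T)$ (or at least that the $g$-submartingale inequality $\mathbb{E}_{s,t}^g[h(Y_t)]\le h(Y_s)$ makes sense) so that the hypothesis can legitimately be applied; this is where the assumption $h(Y_t)\in L^2(\mathcal{F}_t)$ for all $t$, implicit from $h(X)\in L^2$ plus continuity of $h$ and the $L^2$-continuity of $\mathbb{E}^g$ noted after Lemma~\ref{lem 4.1}, is used. If additionally $h\in C^2(\mathbb{R})$ one can then invoke Theorem~\ref{thm-g-convexity} to conclude that $h$ satisfies the pointwise inequality \eqref{g-convexity-differentiable}, but for the statement as given no such regularity is needed — the argument is purely a matter of unwinding the definitions of $g$-martingale and $g$-expectation.
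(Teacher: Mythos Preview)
Your proposal is correct and follows essentially the same route as the paper: take $Y_t=\mathbb{E}_{t,T}^{g}[X]$, observe it is a $g$-martingale by (A3), apply the hypothesis to get that $h(Y_\cdot)$ is a $g$-submartingale, and then read off Jensen's inequality by evaluating the submartingale inequality at the terminal time $t=T$. Your remarks on the integrability bookkeeping and on the typo (``$g$-submartingale'' should be ``$g$-supermartingale'' in the concave case) are apt and go slightly beyond what the paper makes explicit.
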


\begin{proof}
We only prove the case of $g$-submartingale. Since, for each $X\in L^{2}(%
\mathcal{F}_{T})$, $(\mathbb{E}_{t,T}^{g}[X])_{t\in \lbrack 0,T]}$ is a $g$%
-martingale. Define $\bar{Y}_{t}=h(\mathbb{E}_{t,T}^{g}[X])_{t\in \lbrack
0,T]}$, we have, for $0\leq s<t\leq T$,
\begin{equation*}
\mathbb{E}_{s,T}^{g}[h(\mathbb{E}_{t,T}^{g}[X])]=\mathbb{E}_{s,t}^{g}[h(%
\mathbb{E}_{t,T}^{g}[X])]=\mathbb{E}_{s,t}^{g}[\bar{Y}_{t}]\geq \bar{Y}%
_{s}=h(\mathbb{E}_{s,T}^{g}[X])
\end{equation*}%
In particular, as $t=T$, it follows that $\mathbb{E}_{s,T}^{g}[h(X)]\geq h(%
\mathbb{E}_{s,T}^{g}[X])$ for $s\in \lbrack 0,T]$. Thus $h$ is a
$g$-convex function.
\end{proof}

\section{$g$-Convexity for Continuous Functions}

\label{sec4}

In this section we consider $g$-convex functions $h\in C(\mathbb{R})$, i.e.,
without the $C^{2}$-assumption.

We now recall the definition of viscosity subsolutions.

\begin{definition}
\label{defn-viscosity-solution} Let $g$ satisfy (\ref{eqn2.2}) and independent of $\oo$. A continuous function $u:\mathbb{R}\to%
\mathbb{R}$ is called a viscosity subsolution of
$\mathcal{L}_{g}^{t,y,z}u=0$
if, for any $\varphi \in C^2(\mathbb{R})$, and $x\in \mathbb{R}$ such that $%
u-\varphi$ attains local maximum at $x$, one has for each $(t,z)\in[0,T]%
\times \mathbb{R}$,
\begin{equation*}
\mathcal{L}_{g}^{t,x,z}\varphi=\frac{1}{2}\varphi^{\prime \prime }(x)\left
\vert z\right \vert^2+g(t,u(x),\varphi^{\prime }(x)z)-\varphi^{\prime
}(x)g(t,x,z)\ge 0
\end{equation*}
\end{definition}

\begin{theorem}
\label{thm-viscosity-is-g-convex} Let $h\in C(\mathbb{R})$ be of
polynomial growth. Moreover let us assume that $g$ satisfies
(\ref{eqn2.2}) and is independent of $\oo$. The the following
conditions are equivalent:

(i) $h$ is a viscosity subsolution of $\mathcal{L}_{g}^{t,y,z}h=0$;

(ii) $h$ is $g$-convex.
\end{theorem}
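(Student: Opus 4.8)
The plan is to prove the two implications separately, reducing as much as possible to the $C^2$-case already handled in Theorem \ref{thm-g-convexity} by a standard viscosity-solution approximation argument. For (ii) $\Longrightarrow$ (i): suppose $h$ is $g$-convex but fails to be a viscosity subsolution at some point $x_0$. Then there is a test function $\varphi \in C^2(\mathbb{R})$ with $h - \varphi$ having a local maximum at $x_0$ (we may assume $h(x_0) = \varphi(x_0)$ and, after a routine modification, that the maximum is strict and global and that $\varphi$ has polynomial growth), but $\mathcal{L}_g^{t_0,x_0,z_0}\varphi < 0$ on a set of positive $dP\times dt$ measure near $t_0$ for some $z_0$. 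The idea is to run the controlled SDE $-dY_s = 1_{[t_0,T]}(s)[g(s,Y_s,z_0)\,ds - z_0\,dW_s]$ with $Y_{t_0} = x_0$ as in the proof of Theorem \ref{thm-g-convexity}, localize by a stopping time $\tau_m$, and apply It\^o to $\varphi(Y_s)$. Since $h \geq \varphi$ and $h \le \varphi$ only at $x_0$, the $g$-submartingale property of $h(Y_s)$ (which follows from $g$-convexity, exactly as in Theorem \ref{thm-g-submartingale}) will clash with the fact that $\varphi(Y_s)$ is a strict $g$-supersolution near $s = t_0$; comparing $\mathbb{E}^g_{t_0,\cdot}$ values over a short interval gives a contradiction. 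This mirrors the classical proof that continuous functions satisfying a maximum principle are viscosity subsolutions.

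For (i) $\Longrightarrow$ (ii): let $h \in C(\mathbb{R})$ be a viscosity subsolution of polynomial growth. The natural route is mollification: set $h_\eps = h * \rho_\eps$ with $\rho_\eps$ a standard mollifier. One would like $h_\eps$ to satisfy $\mathcal{L}_g^{t,y,z} h_\eps \geq 0$ so that Theorem \ref{thm-g-convexity} applies directly and then pass to the limit using the $L^2$-continuity of $\mathbb{E}^g$ (the Remark after Lemma \ref{lem 4.1}) together with the polynomial-growth control to justify convergence of both $h_\eps(X) \to h(X)$ and $h_\eps(\mathbb{E}^g_{t,T}[X]) \to h(\mathbb{E}^g_{t,T}[X])$ in $L^2$. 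The subtlety is that mollification does not in general preserve a fully nonlinear viscosity-subsolution inequality; the standard fix is to use sup-convolutions $h^\eps(x) = \sup_y\{h(y) - \frac{1}{2\eps}|x-y|^2\}$, which are semiconvex, hence twice differentiable a.e., and are again viscosity subsolutions of a slightly perturbed equation. Alternatively, since $g$ is Lipschitz and independent of $\omega$, one can argue pointwise a.e. in $(t,\omega)$ and invoke the fact that a semiconvex function satisfying $\mathcal{L}_g \varphi \geq 0$ at a.e. point of twice-differentiability must satisfy the Jensen inequality via an approximation by its Alexandrov second derivatives. I would present the sup-convolution version, check the perturbed inequality, apply Theorem \ref{thm-g-convexity} to the (now $C^2$, after a further mollification) approximants, and conclude by the limiting argument.

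The main obstacle I anticipate is the passage to the limit combined with the approximation step in (i) $\Longrightarrow$ (ii): one must simultaneously (a) keep the approximants $g$-convex — i.e., verify the perturbed differential inequality $\mathcal{L}_g^{t,y,z} h^\eps \geq -C(\eps)$ with $C(\eps) \to 0$, exploiting the Lipschitz bound on $g$ to absorb the error — and (b) ensure the approximants remain uniformly of polynomial growth so that the $L^2$-convergence of $h^\eps(X)$ and $h^\eps(\mathbb{E}^g_{t,T}[X])$ is legitimate for every admissible $X$ (in particular $X$ only in $L^2$, not bounded, which is where Lemma \ref{bound} enters to reduce to bounded $X$ first). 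Handling the error term $C(\eps)$ correctly — it perturbs $g$-convexity to an approximate statement, so one needs a stability lemma saying that approximate $g$-convexity in the limit yields exact $g$-convexity — is the technical heart of the argument; everything else is bookkeeping with the comparison theorem and the decomposition theorem (Proposition \ref{prop4.1}) that were used in the $C^2$ case.
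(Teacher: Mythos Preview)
Your direction (ii) $\Longrightarrow$ (i) is in the same spirit as the paper's: both run the forward SDE $-dY_s = g(s,Y_s,z_0)\,ds - z_0\,dW_s$ started at the test point, use $g$-convexity to get $\mathbb{E}^g_{t,t+\delta}[\varphi(X_{t+\delta})]\ge \varphi(x)$ (since $\varphi\ge h$ and $\varphi(x)=h(x)$), and extract the pointwise inequality $\mathcal{L}_g^{t,x,z}\varphi\ge 0$. The paper does this by a direct small-$\delta$ asymptotic: it writes $Y^1_s:=Y_s-\varphi(X_s)$ as the solution of a BSDE, compares it to the ``frozen'' BSDE $Y^2$ obtained by replacing $X_s$ by $x$, shows $|Y^1_t-Y^2_t|=o(\delta)$, and reads off the sign from the explicit ODE for $Y^2$. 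Your contradiction framing is equivalent but you should replace the vague ``clash with the $g$-submartingale property'' by exactly this short-time expansion; the decomposition theorem is not needed here.

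Your direction (i) $\Longrightarrow$ (ii), however, is both different from the paper and contains a real gap. The paper does \emph{not} regularize $h$ and reduce to the $C^2$ theorem. Instead it treats $h$ directly as a viscosity subsolution of the parabolic PDE $\partial_t v + \tfrac12 |z|^2\partial_{xx}v - g(t,x,z)\partial_x v + g(t,v,z\partial_x v)=0$ with terminal data $h$, and invokes the nonlinear Feynman--Kac formula: $u(t,x):=\mathbb{E}^g_{t,T}[h(X_T^{t,x;z})]$ is the viscosity \emph{solution} of that PDE, so by the comparison principle $u\ge h$, i.e.\ $\mathbb{E}^g_{t,T}[h(X_T^{t,x;z})]\ge h(x)$. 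This inequality is then upgraded from deterministic $(x,z)$ to $\mathcal{F}_t$-measurable simple $(\zeta,\eta)$ via the partition property (A4'), then to bounded adapted $\eta$, then to arbitrary bounded $X$ by approximating the BSDE solution $(Y_t,Z)$, and finally to $X\in L^2$ by Lemma~\ref{bound}.

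The obstacle you yourself flag in the regularization route is genuine and not merely ``bookkeeping'': neither mollification nor sup-convolution is known to preserve the subsolution property for the fully nonlinear, $(t,z)$-parametrized inequality $\mathcal{L}_g^{t,y,z}h\ge 0$, because the nonlinearity sits in the zeroth-order term $g(t,h(y),h'(y)z)$ and the first-order coefficient $g(t,y,z)$ depends on $y$. Sup-convolution gives semiconvexity but not $C^2$, and a subsequent mollification destroys the pointwise inequality you would need to invoke Theorem~\ref{thm-g-convexity}. Absorbing the error into an ``approximate $g$-convexity'' statement and passing to the limit would itself require a stability result for $g$-convex functions that is not available a priori. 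The paper's PDE-comparison argument sidesteps all of this: the viscosity framework is used once, at the level of the comparison principle for the parabolic problem, and no smooth approximants of $h$ ever enter.
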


\begin{remark}
For more basic definitions, results and related literature on
viscosity solutions of PDE, we refer to Crandall, Ishii and Lions
\cite{crandallIL1992}.
\end{remark}

For proving this theorem, we need the following lemma.

\begin{lemma}
\label{lem-sub-is-convex} If $g$ satisfies (\ref{eqn2.2}) and $h$ is a
continuous viscosity subsolution of $\mathcal{L}_{g}^{t,y,z}h=0$ for each $%
(t,z)\in \lbrack 0,T]\times \mathbb{R}^{d}$, then $h$ is convex in the usual
sense.
\end{lemma}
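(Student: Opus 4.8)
The plan is to imitate the argument of Lemma \ref{prop-g-conv-in-conv}, but with the pointwise second-derivative inequality replaced by the viscosity-subsolution test. Suppose, for contradiction, that $h$ is not convex. Then there is a point $y_0$ and an affine function $\ell$ such that $h-\ell$ has a strict interior local maximum at $y_0$ while $h-\ell$ is not $\leq 0$ on a neighborhood in the way convexity would force; more precisely, by standard one-dimensional reasoning, failure of convexity produces an interval $[a,b]$ and a value $y_0\in(a,b)$ at which the chord function $\phi(y):=h(y)-\bigl(h(a)+\frac{h(b)-h(a)}{b-a}(y-a)\bigr)$ attains a positive maximum. To get a test function that touches strictly from above and is $C^2$, I would mollify: for small $\eta>0$ set $\varphi_\eta(y)=\phi(y_0)+C_\eta(y-y_0)^2$ with $C_\eta$ chosen so that $\phi-\varphi_\eta$ still has a local maximum at (a point near) $y_0$; passing to a maximizer $x_\eta$ of $\phi-\varphi_\eta$ and translating, we obtain $\varphi\in C^2(\Real)$ such that $h-\varphi$ has a local maximum at $x_\eta$ with $\varphi'(x_\eta)=\frac{h(b)-h(a)}{b-a}$ bounded independently of $\eta$, and $\varphi''(x_\eta)=2C_\eta$ which we can drive to $-\infty$ as $\eta\to 0$ (since the ``room'' between the chord and a genuine local max of $\phi$ allows an arbitrarily concave touching parabola).

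Now apply the viscosity-subsolution property with this $\varphi$ at the point $x_\eta$: for every $(t,z)\in[0,T]\times\Real^d$,
\begin{equation*}
\frac{1}{2}\varphi''(x_\eta)|z|^2+g(t,h(x_\eta),\varphi'(x_\eta)z)-\varphi'(x_\eta)g(t,x_\eta,z)\ge 0 .
\end{equation*}
Using the Lipschitz assumption (\ref{eqn2.2})(a) and the boundedness of $\varphi'(x_\eta)$ and of $x_\eta$, $h(x_\eta)$, exactly as in the proof of Lemma \ref{prop-g-conv-in-conv} we estimate
\begin{equation*}
g(t,h(x_\eta),\varphi'(x_\eta)z)-\varphi'(x_\eta)g(t,x_\eta,z)\le 2C_1|z|+C_1\bigl(1+|g(t,0,0)|\bigr),
\end{equation*}
with $C_1$ depending only on $\mu$ and on the (bounded) data. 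Plugging in and using (\ref{eqn2.2})(b) to absorb $|g(t,0,0)|$ by its $L^\infty$ bound, we arrive at
\begin{equation*}
0\le \frac{1}{2}\varphi''(x_\eta)|z|^2+2C_1|z|+C_2,\qquad\forall z\in\Real^d,
\end{equation*}
which, for $\varphi''(x_\eta)<0$ sufficiently negative and $|z|$ large, is violated. This contradiction shows $h$ must be convex.

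The main obstacle is the first paragraph: producing a legitimate $C^2$ test function that touches $h$ from above at a point where $h$ is ``non-convex'' and whose second derivative can be made arbitrarily negative while its first derivative stays bounded. The cleanest route is to work with the chord defect $\phi$ on $[a,b]$: if $h$ fails to be convex then $\max_{[a,b]}\phi>0$ for some subinterval, the max is attained at an interior point, and at a maximizer one can always slip a downward parabola $\varphi(y)=\phi(y_{\max})-\frac{1}{2}K(y-y_{\max})^2+(\text{affine part})$ underneath — i.e. above $\phi$ — for every $K>0$, because near a local maximum $\phi$ lies below its value. Translating back from $\phi$ to $h$ only adds a fixed affine term, so $\varphi'$ at the contact point is controlled by the chord slope while $\varphi''=-K\to-\infty$. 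Once this geometric construction is pinned down, the analytic estimate is a verbatim repeat of Lemma \ref{prop-g-conv-in-conv}, so I would keep that part brief and cite it.
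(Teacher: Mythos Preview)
There is a genuine gap in your construction of the test function. The assertion that ``at a maximizer one can always slip a downward parabola \ldots\ for every $K>0$, because near a local maximum $\phi$ lies below its value'' is false: $\phi\le\phi(y_{\max})$ near $y_{\max}$ only tells you that $\phi$ lies below the \emph{constant} $\phi(y_{\max})$, not below the parabola $\phi(y_{\max})-\tfrac{K}{2}(y-y_{\max})^2$, which itself sits strictly under that constant. If, for instance, $\phi(y)=\phi(y_{\max})-|y-y_{\max}|^{3}$ near $y_{\max}$, then $\phi$ is $C^2$ there with $\phi''(y_{\max})=0$, and \emph{no} $C^2$ function with negative second derivative can touch $\phi$ (hence $h$) from above at that point. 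Your variant of maximizing $\phi-\varphi_\eta$ with a concave $\varphi_\eta$ does not rescue this: adding $|C_\eta|(y-y_0)^2$ to $\phi$ drives the maximizer toward the endpoints of $[a,b]$, where you lose interior control. Note also that you never needed $\varphi''\to-\infty$; a single test function with $\varphi''<0$ already contradicts $0\le\tfrac12\varphi''|z|^2+2C_1|z|+C_2$ by sending $|z|\to\infty$.

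The paper's remedy is to let the global geometry of $h$ on $[a,b]$ select both the touching point and the negative curvature. It sets $h_\delta(x)=\psi(x)-\delta(x-a)(x-b)$, the chord lifted by a concave bump vanishing at $a$ and $b$, and takes $\delta_0=\inf\{\delta>0:h_\delta\ge h\text{ on }[a,b]\}$. Since the chord fails to dominate $h$, one has $\delta_0>0$; by minimality $h_{\delta_0}$ touches $h$ at some interior point $\bar x$, and there $h_{\delta_0}''(\bar x)=-2\delta_0<0$ is a fixed negative number. Applying the viscosity inequality once with this test function yields $-\delta_0|z|^2+C_1|z|+C_2\ge0$ for all $z\in\Real^d$, the desired contradiction. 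Your analytic estimate at the end is correct and does parallel Lemma~\ref{prop-g-conv-in-conv}; what is missing is precisely this construction of a legitimate test function with strictly negative second derivative.
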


\begin{proof}
If on the contrary $h$ is not convex, then there are constants $-\infty
<a<b<\infty $ such that the relation $\psi \geq h$ fails on $[a,b]$, where
\begin{equation*}
\psi (x):=\frac{h(b)(x-a)}{b-a}+\frac{h(a)(b-x)}{b-a}.
\end{equation*}%
We set $h_{\delta }(x):=\psi (x)-\delta (x-a)(x-b)$ and%
\begin{equation*}
\delta _{0}=\inf \{ \delta >0:h_{\delta }(x)\geq h(x),\  \  \forall x\in
\lbrack a,b]\}.
\end{equation*}%
It is easy to check that $\delta _{0}>0$, $h_{\delta _{0}}\geq h$ on $[a,b]$
and there exists $\bar{x}\in (a,b)$ such that $h_{\delta _{0}}(\bar{x})=h(%
\bar{x})$. But since for each $z\in \mathbb{R}^{d}$, $h$ is a viscosity
subsolution of $\mathcal{L}_{g}^{t,z}h=0$, and $h_{\delta _{0}}-h$ attaints
minimum at $\bar{x}$, we have
\begin{eqnarray*}
0 &\leq &\mathcal{L}_{g}^{t,z}h_{\delta _{0}}(\bar{x})=-\delta
_{0}|z|^{2}+g(t,[\frac{h(b)-h(a)}{b-a}-\delta _{0}(2\bar{x}-b+a)]z) \\
&&+[\frac{h(b)-h(a)}{b-a}-\delta _{0}(2\bar{x}-b+a)]g(t,z).\  \  \
\end{eqnarray*}%
Since $g$ is Lipschitz in $z$, there exists a positive constant $C$
independent of $z$, such that
\begin{equation*}
-\delta _{0}|z|^{2}+C_{1}|z|+C_{2}\geq 0,\  \  \forall z\in \mathbb{R}^{d}.
\end{equation*}%
This contradicts to $\delta _{0}>0$. Thus $h$ must be convex.
\end{proof}

Combining this Lemma with Theorem \ref{thm-viscosity-is-g-convex} we
immediately have a more explicit characterization for a continuous $g$%
-convex function:

\begin{corollary}
We assume the same conditions as in the above theorem. Then the following
condition is equivalent:

(i) $h$ is convex and for each $y$ such that $h^{\prime \prime }(y)$ exists,
$\mathcal{L}_{g}^{t,y,z}h(y)\geq 0$;

(ii) $h$ is $g$-convex.
\end{corollary}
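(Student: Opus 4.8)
The plan is to show the two-way implication by combining Theorem~\ref{thm-viscosity-is-g-convex} with Lemma~\ref{lem-sub-is-convex}, reducing the corollary to an essentially bookkeeping exercise about what "being a viscosity subsolution" means for a convex function. First I would establish (ii)~$\Longrightarrow$~(i). If $h$ is $g$-convex, then by Theorem~\ref{thm-viscosity-is-g-convex} it is a viscosity subsolution of $\mathcal{L}_{g}^{t,y,z}h=0$, and by Lemma~\ref{lem-sub-is-convex} it is convex in the usual sense. It remains to check that at every point $y$ where $h''(y)$ exists one has $\mathcal{L}_{g}^{t,y,z}h(y)\geq 0$, $dP\times dt$-a.s., for all $z$. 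The point is that at a point of twice-differentiability one can construct, for any $\eps>0$, a $C^2$ test function $\varphi$ with $\varphi(y)=h(y)$, $\varphi'(y)=h'(y)$, $\varphi''(y)=h''(y)-\eps$ (for instance a suitable quadratic, or $h$ itself smoothed) such that $h-\varphi$ has a strict local maximum at $y$; the subsolution inequality then gives $\tfrac12(h''(y)-\eps)|z|^2+g(t,h(y),h'(y)z)-h'(y)g(t,y,z)\geq 0$, and letting $\eps\downarrow 0$ yields $\mathcal{L}_{g}^{t,y,z}h(y)\geq 0$.

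For the converse (i)~$\Longrightarrow$~(ii), I would argue that a convex function satisfying the pointwise inequality wherever $h''$ exists is automatically a viscosity subsolution, and then invoke Theorem~\ref{thm-viscosity-is-g-convex} to conclude $g$-convexity. Here the key classical fact about convex functions is that they are twice differentiable Lebesgue-almost everywhere (Alexandrov's theorem in one dimension), and more usefully: if $\varphi\in C^2(\mathbb{R})$ and $h-\varphi$ attains a local maximum at $x$, then $h$ lies below the parabola $\varphi(x)+\varphi'(x)(\cdot-x)+\tfrac12\varphi''(x)(\cdot-x)^2+o$ near $x$, so by convexity one can find points $y_n\to x$ at which $h''(y_n)$ exists, $h'(y_n)\to\varphi'(x)$, and $\limsup_n h''(y_n)\leq\varphi''(x)$. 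Applying the hypothesis at each $y_n$, namely $\tfrac12 h''(y_n)|z|^2+g(t,h(y_n),h'(y_n)z)-h'(y_n)g(t,y_n,z)\geq 0$, and passing to the limit using the continuity of $h$ and the Lipschitz continuity of $g$ in $(y,z)$, we obtain $\tfrac12\varphi''(x)|z|^2+g(t,h(x),\varphi'(x)z)-\varphi'(x)g(t,x,z)\geq 0$, which is exactly the viscosity subsolution inequality of Definition~\ref{defn-viscosity-solution}.

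The main obstacle I anticipate is the technical passage in the paragraph above: justifying the existence of a sequence $y_n\to x$ of twice-differentiability points of $h$ along which both $h'(y_n)$ converges to the test-function slope $\varphi'(x)$ and $h''(y_n)$ has lim sup bounded by $\varphi''(x)$. Convexity is essential here — it controls $h'$ (which is monotone, hence has a limit from each side at $x$, and these must both equal $\varphi'(x)$ since $\varphi$ touches $h$ from above at an interior local max) and it makes the distributional second derivative a positive measure, whose absolutely continuous part is $h''$; the subsolution constraint only "sees" this a.e.-defined $h''$. One must be slightly careful when $x$ is a kink of $h$: but at a kink, $h-\varphi$ cannot have a local maximum (the left and right slopes of $h$ straddle $\varphi'(x)$, forcing $h>\varphi$ on one side), so such $x$ simply do not arise as test points. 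Once this real-analysis lemma about convex functions is in hand, everything else is a direct combination of results already proved in the excerpt.
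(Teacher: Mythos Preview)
Your overall strategy coincides with the paper's: reduce both directions to Theorem~\ref{thm-viscosity-is-g-convex} (the equivalence $g$-convex $\Longleftrightarrow$ viscosity subsolution) together with Lemma~\ref{lem-sub-is-convex}, so that what remains is the purely real-analytic equivalence ``convex and $\mathcal{L}_g^{t,y,z}h(y)\geq 0$ at every twice-differentiability point'' $\Longleftrightarrow$ ``viscosity subsolution of $\mathcal{L}_g^{t,y,z}h=0$''. The paper does not prove this last equivalence; it simply invokes Alvarez--Lasry--Lions \cite{alvarezLL1997}. You instead sketch a self-contained proof of it: the easy direction via a quadratic test function touching from above, and the harder direction via a sequence $y_n\to x$ of Alexandrov points with $h'(y_n)\to\varphi'(x)$ and $\limsup h''(y_n)\leq\varphi''(x)$. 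This is exactly the content of the cited result (specialized to $d=1$), so your route is not really different in spirit---it is the same architecture with the black box opened up. What you gain is self-containment; what the paper gains is brevity. Your handling of kinks (no local max of $h-\varphi$ can occur there, since convexity forces $h'_-(x)\leq h'_+(x)$ while a local max forces $h'_-(x)\geq\varphi'(x)\geq h'_+(x)$) is correct and is precisely the sort of detail hidden in \cite{alvarezLL1997}.

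One small slip: in your (ii)$\Rightarrow$(i) paragraph you take a test function with $\varphi''(y)=h''(y)-\eps$, but that makes $h-\varphi$ have a local \emph{minimum}, not maximum, at $y$. You need $\varphi''(y)=h''(y)+\eps$ (a parabola touching $h$ from above); the viscosity inequality then reads $\tfrac12(h''(y)+\eps)|z|^2+g(t,h(y),h'(y)z)-h'(y)g(t,y,z)\geq 0$, and $\eps\downarrow 0$ still gives the desired conclusion. This is a sign typo, not a structural problem.
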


\begin{proof}
If $h$ is a viscosity subsolution of $\mathcal{L}_{g}^{t,y,z}h=0$ then $h$
is convex. On the other hand, by Alvarez, Lasry and Lions \cite%
{alvarezLL1997}, if $h$ is convex and for each $y$ such that $h^{\prime
\prime }(y)$ exists, one has $\mathcal{L}_{g}^{t,y,z}h(y)\geq 0$, then $h$
is a viscosity subsolution of $\mathcal{L}_{g}^{t,y,z}h=0$.
\end{proof}

\, \,

\begin{proof}[The proof of Theorem \protect \ref{thm-viscosity-is-g-convex}(i)%
$\Longrightarrow $(ii)]
Given $(t,x,z)\in \lbrack 0,T]\times \mathbb{R}\times \mathbb{R}^{d}$, we
consider the following SDE
\begin{equation*}
dX_{s}^{t,x;z}=-g(s,X_{s}^{t,x;z},z)ds+zdW_{s},\ s\in (t,T],\  \
X_{s}^{t,x;z}=x,\  \ s\in \lbrack 0,t].
\end{equation*}%
It is clear that $X^{t,x;z}$ is also a $g$-martingale on $[0,T]$: In
particular $\mathbb{E}_{s,T}^{g}[X_{T}^{t,x;z}]=X_{s}^{t,x;z}$, and $\mathbb{%
E}_{t,T}^{g}[X_{T}^{t,x;z}]=\mathbb{E}^{g}[X_{T}^{t,x;z}]=x$. On the other
hand, by nonlinear Feynman-Kac formula, the function $u(t,x):=\mathbb{E}%
^{g}[h(X_{T}^{t,x;z})]$ defined on $[0,T]\times \mathbb{R}$ is the viscosity
solution of the parabolic PDE%
\begin{equation*}
\partial _{t}u+\frac{1}{2}\partial _{xx}u(t,x)|z|^{2}-\partial
_{x}ug(t,x,z)+g(t,u,z\partial _{x}u)=0,\  \ u|_{t=T}=h(x).
\end{equation*}%
But the function defined by $v(t,x):=h(x)$ is a viscosity subsolution of $%
\partial _{t}v+\mathcal{L}_{g}^{t,z}v=0$ with terminal condition $v|_{t=T}=h$%
. It follows from the maximum principle of viscosity solution that
\begin{equation*}
u(t,x)\geq h(x),\  \  \forall (t,x)\in \lbrack 0,T]\times \mathbb{R}.
\end{equation*}%
Or
\begin{align*}
\mathbb{E}^{g}[h(X_{T}^{t,x;z})|\mathcal{F}_{t}]& =\mathbb{E}%
_{t,T}^{g}[h(X_{T}^{t,x;z})] \\
& \geq h(x)=h(\mathbb{E}^{g}[X_{T}^{t,x;z}])=h(\mathbb{E}%
_{t,T}^{g}[X_{T}^{t,x;z}|\mathcal{F}_{t}]).
\end{align*}%
We now apply a technique initialed in \cite[pp.107, Theorem 4.6;
Peng1995:Xiangfan Summer School]{peng1997b}: Let $\{A_{i}\}_{i=1}^{N}$ be an
$\mathcal{F}_{t}$-measurable partition of $\Omega $; \ and $z_{i}\in \mathbb{%
R}^{n}$, $x_{i}\in \mathbb{R}$, $i=1,\cdots ,N$ be given. We set $\eta
=\sum_{i=1}^{N}\mathbf{1}_{A_{i}}z_{i}$, $\zeta =\sum_{i=1}^{N}\mathbf{1}%
_{A_{i}}x_{i}$. It is easy to check that $\sum_{i=1}^{N}\mathbf{1}%
_{A_{i}}X_{s}^{t,x_{i};z_{i}}=X_{T}^{t,\zeta ;\eta }$.
\begin{eqnarray*}
\sum_{i=1}^{N}\mathbf{1}_{A_{i}}\mathbb{E}^{g}[h(X_{T}^{t,x_{i};z_{i}})|%
\mathcal{F}_{t}] &=&\mathbb{E}^{g}[\sum_{i=1}^{N}\mathbf{1}%
_{A_{i}}h(X_{T}^{t,x_{i};z_{i}})|\mathcal{F}_{t}] \\
&=&\mathbb{E}^{g}[h(\sum_{i=1}^{N}\mathbf{1}_{A_{i}}X_{T}^{t,x_{i};z_{i}})|%
\mathcal{F}_{t}] \\
&=&\mathbb{E}^{g}[h(X_{T}^{t,\zeta ;\eta })|\mathcal{F}_{t}]
\end{eqnarray*}%
Thus
\begin{eqnarray*}
\mathbb{E}^{g}[h(X_{T}^{t,\zeta ;\eta })|\mathcal{F}_{t}] &=&\sum_{i=1}^{N}%
\mathbf{1}_{A_{i}}\mathbb{E}^{g}[h(X_{T}^{t,x_{i};z_{i}})|\mathcal{F}%
_{t}]\geq \sum_{i=1}^{N}\mathbf{1}_{A_{i}}\mathbb{E}%
^{g}[h(X_{T}^{t,x_{i};z_{i}})|\mathcal{F}_{t}] \\
&\geq &\sum_{i=1}^{N}\mathbf{1}_{A_{i}}h(\mathbb{E}%
^{g}[X_{T}^{t,x_{i};z_{i}}|\mathcal{F}_{t}])=h(\sum_{i=1}^{N}\mathbf{1}%
_{A_{i}}\mathbb{E}^{g}[X_{T}^{t,x_{i};z_{i}}|\mathcal{F}_{t}]) \\
&=&h(\mathbb{E}^{g}[X_{T}^{t,\zeta ;\eta }|\mathcal{F}_{t}])=h(\zeta ).
\end{eqnarray*}%
\begin{equation*}
\mathbb{E}_{t,T}^{g}[h(X_{T}^{t,\zeta ;\eta })]=\sum_{i=1}^{N}1_{A_{i}}%
\mathbb{E}_{t,T}^{g}[h(X_{T}^{t,x_{i};z_{i}})]\geq
\sum_{i=1}^{N}1_{A_{i}}h(x_{i})=h(\zeta ),
\end{equation*}%
In other words, for bounded $\mathcal{F}_{t}$-measurable simple functions $%
\zeta ,\eta $,
\begin{equation}
\mathbb{E}_{t,T}^{g}[h(\zeta -\int_{t}^{T}g(s,X_{s}^{t,\zeta ;\eta },\eta
)ds+\int_{t}^{T}\eta dW_{s})]\geq h(\zeta )  \label{eqninequality}
\end{equation}%
It follows that for any bounded $\mathcal{F}_{t}$-measurable random
variables $\zeta ,\eta $, we also have (\ref{eqninequality}). Moreover, for
any bounded $\mathcal{F}_{t}$-adapted process $\eta $ and bounded $\mathcal{F%
}_{t}$-measurable random variables $\zeta $, we have
\begin{equation}
\mathbb{E}_{t,T}^{g}[h(\zeta -\int_{t}^{T}g(s,X_{s}^{t,\zeta ;\eta },\eta
_{s})ds+\int_{t}^{T}\eta _{s}dW_{s})]\geq h(\zeta )  \label{eqninequality1}
\end{equation}%
Indeed we note that $\zeta -\int_{t}^{T}g(s,X_{s}^{t,\zeta ;\eta },\eta
_{s})ds+\int_{t}^{T}\eta _{s}dW_{s}\in L^{2m+1}(\mathcal{F}_{T})$, this with
the polynomial growth of $h$ 
and the continuity of $\mathbb{E}^{g}[\cdot ]$ yields (\ref{eqninequality1}).

Now for any given bounded $\mathcal{F}_{T}$-measurable $X$, let $%
(Y_{s},Z_{s})_{s\in \lbrack 0,T]}$ be the solution of the BSDE
\begin{equation*}
Y_{t}=X+\int_{t}^{T}g(s,Y_{s},Z_{s})ds-\int_{t}^{T}Z_{s}dW_{s}
\end{equation*}%
Let $h$ be a function with polynomial growth $|h(x)|\leq C(1+|x|^{m})$. We
note that $\sup_{t\in \lbrack 0,T]}|Y_{t}(\omega )|\in L^{\infty }(\mathcal{F%
}_{T})$ and,
\begin{equation*}
E[\left( \int_{0}^{T}\left \vert Z_{s}\right \vert ^{2}ds\right) ^{\frac{2m+1%
}{2}}]<\infty .
\end{equation*}%
We can find a sequence of $\mathcal{F}_{t}$-measurable simple functions $\{
\zeta ^{i}\}_{i=1}^{\infty }$ that converges to $Y_{t}$ in $L^{\infty }(%
\mathcal{F}_{t})$ and a sequence of $\mathcal{F}_{t}$-progressively
measurable simple processes $\{(\eta _{t}^{i})_{t\in \lbrack
0,T]}\}_{i=1}^{\infty }$ such that%
\begin{equation*}
\lim_{i\rightarrow \infty }E[\left( \int_{0}^{T}\left \vert Z_{s}-\eta
_{s}^{i}\right \vert ^{2}ds\right) ^{^{\frac{2m+1}{2}}}]=0.
\end{equation*}%
It follows from BDG-inequality that the random variables
\begin{equation*}
X^{i}:=\zeta ^{i}-\int_{t}^{T}g(s,X_{s}^{t,\zeta ^{i};\eta ^{i}},\eta
_{s}^{i})ds+\int_{t}^{T}\eta _{s}^{i}dW_{s}
\end{equation*}
converges in $L^{2m+1}(\mathcal{F}_{T})$ to $X$. Thus $h(X^{i})$ converges
to $h(X)$ in $L^{2}(\mathcal{F}_{T})$. Thus
\begin{equation*}
\mathbb{E}_{t,T}^{g}[h(X)]=\lim_{i\rightarrow \infty }\mathbb{E}%
_{t,T}^{g}[h(X^{i})]\geq \lim_{i\rightarrow \infty }h(\zeta ^{i})=h(Y_{t})=h(%
\mathbb{E}_{t,T}^{g}[X]).
\end{equation*}%
Thus (ii) holds for the case where $X\in L^{\infty }(\mathcal{F}_{T})$. This
with the fact that $h$ is convex and Lemma \ref{bound} it follows that (ii)
holds for all $X\in L^{2}(\mathcal{F}_{T})$ such that $h(X)\in L^{2}(%
\mathcal{F}_{T})$. The proof is complete.
\end{proof}

\vspace{0.2cm}

\begin{proof}[The proof of Theorem \protect \ref{thm-viscosity-is-g-convex}%
(ii)$\Longrightarrow $(i)]
We will apply a technique in \cite[pp.126]{peng1997b}. For a fixed $t,x,z$,
let $\varphi $ be a smooth and polynomial growth function such that $\varphi
\geq h$ and $h(x)=\varphi (x)$. We consider
\begin{equation*}
X_{s}^{t,x;z}=x-\int_{s}^{s}g(r,X_{r}^{t,x;z},z)dr+z(W_{s}-W_{t}),\  \  \ s\in
\lbrack t,t+\delta ].
\end{equation*}%
where $\delta $ is a small positive number such that $t+\delta \leq T$. It
is clear that $X^{t,x;z}$ is a $g$-martingale. Since $h$ is $g$-convex, we
have
\begin{equation*}
\mathbb{E}_{t,t+\delta }^{g}[\varphi (X_{t+\delta }^{t,x;z})]\geq \mathbb{E}%
_{t,t+\delta }^{g}[h(X_{t+\delta }^{t,x;z})]\geq h(\mathbb{E}_{t,t+\delta
}^{g}[X_{t+\delta }^{t,x;z}])=h(x)=\varphi (x).
\end{equation*}%
Or%
\begin{equation*}
Y_{t}-\varphi (x)=\mathbb{E}_{t,t+\delta }^{g}[\varphi (X_{t+\delta
}^{t,x;z})]-\varphi (x)\geq 0
\end{equation*}%
where $(Y,Z)$ solve the BSDE%
\begin{equation*}
\begin{cases}
-dY_{s} & =g(s,Y_{s},Z_{s})ds-Z_{s}dW_{s},\  \ s\in \lbrack t,t+\delta ], \\
Y_{t+\delta } & =\varphi (X_{t+\delta }^{t,x;z}).%
\end{cases}%
\end{equation*}%
We consider
\begin{equation*}
Y_{s}^{1}:=Y_{s}-\varphi (X_{s}^{t,x;z}),\  \  \ Z_{s}^{1}:=Z_{s}-\varphi
_{x}(X_{s}^{t,x;z})z
\end{equation*}%
which is the solution of the BSDE%
\begin{equation*}
\begin{cases}
-dY_{s}^{1} & =[g(s,Y_{s}^{1}+\varphi (X_{s}^{t,x;z}),Z_{s}^{1}+\varphi
_{x}(X_{s}^{t,x;z})z)+\mathcal{\bar{L}}^{s}\varphi
(X_{s}^{t,x;z})]ds-Z_{s}^{1}dW_{s}, \\
Y_{t+\delta }^{1} & =0.%
\end{cases}%
\end{equation*}%
where $s\in \lbrack t,t+\delta ]$ and $\mathcal{\bar{L}}^{s}\varphi (x)=%
\frac{1}{2}\varphi _{xx}(x)|z|^{2}-\varphi _{x}(x)g(s,x,z)$. We can prove
that $E[|Y_{t}^{1}-Y_{t}^{2}|]=o(\delta )$, where $(Y^{2},Z^{2})$ solves
\begin{equation*}
\begin{cases}
-dY_{s}^{2} & =[g(s,Y_{s}^{2}+\varphi (x),Z_{s}^{2}+\varphi _{x}(x)z)+%
\mathcal{\bar{L}}^{s}\varphi (x)]ds-Z_{s}^{2}dW_{s},\  \ s\in \lbrack
t,t+\delta ], \\
Y_{t+\delta }^{2} & =0.%
\end{cases}%
\end{equation*}%
But It is easy to check that $Z^{2}\equiv 0$ and%
\begin{equation*}
\begin{cases}
-dY_{s}^{2} & =[g(s,Y_{s}^{2}+\varphi (x),\varphi _{x}(x)z)+\mathcal{\bar{L}}%
^{s}\varphi (x)]ds, \\
Y_{t+\delta }^{2} & =0.%
\end{cases}%
\end{equation*}%
Thus from the Lipschitz continuity of $g(s,\cdot ,z)$, we have
\begin{align*}
Y_{t}-\varphi (x)& =Y_{t}^{1}=Y_{t}^{2}+o(\delta ) \\
& =\int_{t}^{t+\delta }[g(s,Y_{s}^{2}+\varphi (x),\varphi _{x}(x)z)+\mathcal{%
\bar{L}}^{s}\varphi (x)]ds+o(\delta ) \\
& =\int_{t}^{t+\delta }[g(s,\varphi (x),\varphi _{x}(x)z)+\mathcal{\bar{L}}%
^{s}\varphi (x)]ds+o(\delta )\geq 0
\end{align*}%
From which it follows that $\mathcal{\bar{L}}^{t}\varphi (x)+g(t,\varphi
(x),\varphi _{x}(x)z)=\mathcal{L}_{g}^{t,x,z}\varphi (x)\geq 0$. Thus $h$ is
a viscosity subsolution of $\mathcal{L}_{g}^{t,y,z}u=0$.
\end{proof}

\section{$g$-Convexity and Viability}

\label{sec5} Surprisingly to us, the notion of $g$-convexity has a deep
relation with the notion of viability for BSDE introduced and systematically
studied by Buckdahn, Quincampoix and Rascanu in \cite{buckdahnQR2000}. We
recall the notion and a result about the backward stochastic viability
property.

\begin{definition}[Definition 3 in \protect \cite{buckdahnQR2000}]
\label{defn-viability} Let $K$ be a nonempty closed subset of $\mathbb{R}^m$.

(a) A stochastic process $(Y_{t})_{t\in \lbrack 0,T]}$ is viable in $K$ if
and only if
\begin{equation*}
Y_{t}\in K,\quad P\text{-}a.s.,\  \forall t\in \lbrack 0,T].
\end{equation*}

(b) The closed set $K$ enjoys the backward stochastic viability property,
denoted $g$-\textbf{BSVP}, for (\ref{eqn2.1}) if and only if:

$\forall \tau \in[0,T],\forall X\in L^2(\mathcal{F}_\tau)$ such that $X\in K$
P-a.s., there exists a solution $(Y,Z)$ to BSDE (\ref{eqn2.1}) over the time
interval $[0,\tau]$,
\begin{equation*}
Y_s=X+\int_s^\tau g(r,Y_r,Z_r)\,dr-\int_s^\tau Z_r\,dW_r,\quad s\in[0,\tau]
\end{equation*}
such that $(Y_s)_{s\in[0,\tau]}$ is viable in $K$.
\end{definition}

\begin{lemma}[Theorem 2.4 in \protect \cite{buckdahnQR2000}]
\label{thm2.4} Suppose that $g$ satisfies condition (\ref{eqn2.2}). Let $K$
be a nonempty closed set. If $K$ enjoys $g$-BSVP for (\ref{eqn2.1}), then $K$
is convex.
\end{lemma}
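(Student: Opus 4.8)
The plan is to argue by contradiction: assume $K$ is not convex, so there exist two points $a_0, a_1 \in K$ and $\lambda \in (0,1)$ with $a_\lambda := \lambda a_1 + (1-\lambda) a_0 \notin K$. The goal is to build, from these data, an $\mathcal{F}_\tau$-measurable terminal value $X$ taking values in $K$ for which \emph{no} solution of the BSDE can remain in $K$, contradicting $g$-BSVP. First I would exploit the randomness of the Brownian filtration to encode the convex combination: choose $\tau \in (0,T]$ and an event $A \in \mathcal{F}_\tau$ with $P(A) = \lambda$ (possible since $\mathcal{F}_\tau$ is atomless), and set $X = \mathbf{1}_A a_1 + \mathbf{1}_{A^c} a_0$, which is $K$-valued $P$-a.s. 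By $g$-BSVP there is a solution $(Y,Z)$ on $[0,\tau]$ with $Y_s \in K$ for all $s$; in particular $Y_0 \in K$ is deterministic (being $\mathcal{F}_0$-measurable).

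The key step is then to control $Y_0$ and show it is forced to be close to the forbidden point $a_\lambda$. Here I would run the same comparison as in the linear theory: the solution starting from $X$ and the solution starting from the constant $a_1$ (resp. $a_0$) differ, at time $0$, by a quantity governed by the Lipschitz constant $\mu$ of $g$ and the time horizon $\tau$. Concretely, using the standard a priori $L^2$-estimate for BSDEs (Lemma \ref{lem 4.1}) and the Lipschitz property \eqref{eqn2.2}(a), one gets $\EE[\,|Y_s - \bar Y_s|^2\,] \le C\,\EE[\,|X - \bar X|^2\,]$ where $\bar Y$ solves the BSDE with terminal value $\bar X$, with $C \to 1$ as $\tau \to 0$ (more precisely $C = C(\mu,\tau)$ with $C(\mu,0^+)=1$). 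Taking $\bar X = a_1$ on $A$ and $\bar X = a_0$ on $A^c$ — i.e., using that the BSDE started from a deterministic terminal value stays near that value on a short interval — and using the restriction property (A4') / linearity of conditional expectations along the partition $\{A, A^c\}$, one finds that $Y_0$ lies within distance $\varepsilon(\tau)$ of $\lambda a_1 + (1-\lambda) a_0 = a_\lambda$, where $\varepsilon(\tau) \to 0$ as $\tau \to 0$. Since $a_\lambda \notin K$ and $K$ is closed, for $\tau$ small enough the ball $B(a_\lambda, \varepsilon(\tau))$ is disjoint from $K$, forcing $Y_0 \notin K$ — the desired contradiction.

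I expect the main obstacle to be making the "$Y_0$ is near $a_\lambda$" estimate genuinely quantitative and uniform in the right way: one must track how the error depends on $\tau$ and confirm it vanishes, and one must handle the fact that the generator $g$ depends on $(y,z)$ and need not vanish anywhere, so even the constant-terminal-value solutions drift by an amount of order $\tau$ (which is fine, since that is $o(1)$) but their $Z$-components are genuinely nonzero, so the decomposition of $Y_0$ along $A$ and $A^c$ is not exact but only approximate. The cleanest route is probably to fix $\tau$ small, use the partition property of $g$-expectations (A4') to write $Y_0 = \mathbb{E}_{0,\tau}^g[X]$ exactly, compare it to $\mathbb{E}_{0,\tau}^g[a_1]$ and $\mathbb{E}_{0,\tau}^g[a_0]$ via the $L^2$-stability estimate, and then let $\tau \downarrow 0$ so that $\mathbb{E}_{0,\tau}^g[a_i] \to a_i$; the interchange of the convex combination with these limits gives $Y_0 \to a_\lambda$, completing the contradiction. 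One should double-check that the solution provided by $g$-BSVP is on $[0,\tau]$ with the correct terminal condition so that $Y_0 = \mathbb{E}_{0,\tau}^g[X]$ really is the object being estimated.
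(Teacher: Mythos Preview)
The paper does not actually prove this lemma: it is quoted verbatim as Theorem~2.4 of \cite{buckdahnQR2000}, with only a remark that the continuity-in-$t$ hypothesis on $g$ used there is unnecessary. So there is no ``paper's proof'' to compare against; I can only comment on your proposal on its own merits.

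Your overall strategy --- take $a_0,a_1\in K$ with $a_\lambda\notin K$, set $X=\mathbf 1_A a_1+\mathbf 1_{A^c}a_0$ for $A\in\mathcal F_\tau$ with $P(A)=\lambda$, and show the BSDE value $Y_0$ is forced near $a_\lambda$ for small $\tau$ --- is the right one, and is essentially how the result is proved in \cite{buckdahnQR2000}. But your proposed implementation of the key estimate does not work as written. Two points:

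\begin{itemize}
\item Property (A4') requires an $\mathcal F_t$-measurable partition when conditioning at time $t$. Your partition $\{A,A^c\}$ lives in $\mathcal F_\tau$, while you are evaluating at time $0$; (A4') gives you nothing here. (Also, (A4') and the $g$-expectation notation are stated in the paper only for $m=1$, whereas $K\subset\mathbb R^m$.)
\item Your ``cleanest route'' --- compare $Y_0$ to $\mathbb E^g_{0,\tau}[a_i]$ via the $L^2$-stability estimate and then pass to the limit --- does not close. The stability bound gives $|Y_0-\mathbb E^g_{0,\tau}[a_i]|^2\le C(\tau)\,E|X-a_i|^2$, but $E|X-a_1|^2=(1-\lambda)|a_0-a_1|^2$ does \emph{not} tend to $0$, so you never conclude that $Y_0$ is close to any convex combination of the $\mathbb E^g_{0,\tau}[a_i]$. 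The nonlinearity of $\mathbb E^g$ blocks the ``interchange'' you want.
\end{itemize}

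The fix is simpler than what you propose: take the \emph{linear} expectation of the BSDE on $[0,\tau]$ to get
\[
Y_0 \;=\; E[X] \;+\; E\!\left[\int_0^\tau g(s,Y_s,Z_s)\,ds\right]
\;=\; a_\lambda \;+\; E\!\left[\int_0^\tau g(s,Y_s,Z_s)\,ds\right].
\]
Then bound the drift term using \eqref{eqn2.2}: $|g(s,Y_s,Z_s)|\le |g(s,0,0)|+\mu(|Y_s|+|Z_s|)$, together with the a~priori estimate (Lemma~\ref{lem 4.1}) on $\sup_s|Y_s|$ and $\int|Z_s|^2ds$, gives $|Y_0-a_\lambda|\le C\sqrt{\tau}\to 0$. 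Now choose $\tau$ small enough that $Y_0$ lies in the open complement of $K$; contradiction. This is the estimate you were circling around, but it goes through the classical expectation, not the $g$-expectation.
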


\begin{remark}
In the above lemma, the authors in \cite{buckdahnQR2000} assume that $g$
also satisfies the following conditions: $g(\omega ,\cdot ,y,z)$ is
continuous, as a part of whole assumptions. But in their proof, we can see
that this condition is needless to this lemma, condition (\ref{eqn2.2}) is
enough.
\end{remark}

\begin{theorem}
\label{thm-g-convex-BSVP} Let $g$ satisfy (\ref{eqn2.2}) and $h:\mathbb{R}%
\rightarrow \mathbb{R}$ be a continuous function. Moreover assume that
\begin{equation*}
\bar{g}(t,y^{1},y^{2},z^{1},z^{2})=\left(
\begin{array}{c}
g(t,y^{1},z^{1}) \\
g(t,y^{2},z^{2})%
\end{array}%
\right) .
\end{equation*}%
Then the following statements are equivalent:

(i). $h$ is $g$-convex;

(ii). $\mathbf{epi}(h)$ enjoys $\bar{g}$-BSVP where
\begin{equation*}
\mathbf{epi}(h)=\left \{ (x_{1},x_{2})\in \mathbb{R}^{2};\ h(x_{1})\leq
x_{2}\right \} .
\end{equation*}
\end{theorem}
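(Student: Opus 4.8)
\textbf{Proof plan for Theorem \ref{thm-g-convex-BSVP}.}
The plan is to exploit the fact that being $g$-convex is equivalent, for the two-dimensional process $(Y^1,Y^2)$ governed by the decoupled generator $\bar g$, to saying that whenever the terminal value lies in $\mathbf{epi}(h)$ one can keep the whole trajectory in $\mathbf{epi}(h)$. First I would prove (i)$\Longrightarrow$(ii). Take $\tau\in[0,T]$ and $X=(X^1,X^2)\in L^2(\mathcal{F}_\tau)$ with $X\in\mathbf{epi}(h)$ P-a.s., i.e. $h(X^1)\le X^2$. Solve the two scalar BSDEs $(g,\tau,X^1)$ and $(g,\tau,X^2)$ separately; call the solutions $(Y^1,Z^1)$ and $(Y^2,Z^2)$, and set $Y=(Y^1,Y^2)$, which solves the BSDE with generator $\bar g$ and terminal value $X$ over $[0,\tau]$. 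Now $Y^1_s=\mathbb{E}^g_{s,\tau}[X^1]$ and, since $h$ is $g$-convex, $h(Y^1_s)=h(\mathbb{E}^g_{s,\tau}[X^1])\le \mathbb{E}^g_{s,\tau}[h(X^1)]$; combining this with the comparison theorem applied to $h(X^1)\le X^2$ gives $h(Y^1_s)\le \mathbb{E}^g_{s,\tau}[h(X^1)]\le \mathbb{E}^g_{s,\tau}[X^2]=Y^2_s$ for all $s$, P-a.s., which is exactly $Y_s\in\mathbf{epi}(h)$. Hence $\mathbf{epi}(h)$ enjoys $\bar g$-BSVP. One must be slightly careful that $X^2$ and $h(X^1)$ are in $L^2$; this is part of the definition of $g$-convexity (the inequality is required precisely for those $X$ with $h(X)\in L^2$), and for the general $L^2$ reduction one invokes Lemma \ref{bound} together with the fact, obtained from Lemma \ref{thm2.4}, that $h$ is convex — but here it is cleaner simply to note the BSVP statement only concerns $X\in\mathbf{epi}(h)\cap L^2$, where $h(X^1)\le X^2\in L^2$.

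For (ii)$\Longrightarrow$(i), I would run the argument in reverse. Fix $\tau=T$ and any $X\in L^2(\mathcal{F}_T)$ with $h(X)\in L^2(\mathcal{F}_T)$. Apply $\bar g$-BSVP to the terminal value $(X,h(X))$, which lies in $\mathbf{epi}(h)$ by definition. Because $\bar g$ is decoupled, the resulting viable solution $(Y^1,Y^2)$ necessarily has $Y^1_s=\mathbb{E}^g_{s,T}[X]$ and $Y^2_s=\mathbb{E}^g_{s,T}[h(X)]$ by uniqueness of solutions to each scalar BSDE. Viability in $\mathbf{epi}(h)$ says $h(Y^1_s)\le Y^2_s$ for all $s$, P-a.s., i.e. $h(\mathbb{E}^g_{s,T}[X])\le \mathbb{E}^g_{s,T}[h(X)]$, which is precisely the defining inequality \eqref{eqnJ}. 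Thus $h$ is $g$-convex.

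The main obstacle is the bookkeeping around integrability and the identification of the viable solution with the pair of $g$-expectations. In the (ii)$\Longrightarrow$(i) direction one must argue that the solution whose existence is asserted by the BSVP definition is the solution — this is immediate from the uniqueness part of the Pardoux–Peng existence theorem applied coordinatewise, since $\bar g$ is block-diagonal, but it is worth stating explicitly. In the (i)$\Longrightarrow$(ii) direction, the subtle point is that the $g$-convexity inequality and the comparison theorem must be chained in the right order, and that the monotone-approximation/closure argument (Lemma \ref{bound}) is only needed if one wants viability for all of $L^2$ rather than for $\mathbf{epi}(h)\cap L^2$; since $\mathbf{epi}(h)\cap L^2$ already forces $h(X^1)\le X^2\in L^2$, the inequality is available directly and no approximation is required. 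I would also remark, as the authors do for Lemma \ref{thm2.4}, that although \cite{buckdahnQR2000} assume continuity of $g$ in $(y,z)$, the statements we use hold under \eqref{eqn2.2} alone.
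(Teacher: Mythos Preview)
Your proof is correct and follows essentially the same route as the paper's own argument: in both directions you exploit the decoupled structure of $\bar g$ to identify the two-dimensional BSDE solution with the pair $(\mathbb{E}^g_{\cdot,\tau}[X^1],\mathbb{E}^g_{\cdot,\tau}[X^2])$, then chain the $g$-convexity inequality with the scalar comparison theorem. One small caveat: your parenthetical suggestion to obtain convexity of $h$ via Lemma~\ref{thm2.4} in the (i)$\Rightarrow$(ii) direction is circular (that lemma needs BSVP as input, which is what you are proving), but since you correctly set this aside and argue directly, it does not affect the argument.
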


\begin{proof}
(i)$\Rightarrow $(ii): It is obvious that $\mathbf{epi}(h)$ is a closed set
in $\mathbb{R}^{2}$.

Given any $X=(X_{1},X_{2})^{T}\in \mathbf{epi}(h)$ P-a.s. such that $X\in
L^{2}(\mathcal{F}_{T},\mathbb{R}^{2})$. By the definition of $\mathbf{epi}$,
we have
\begin{equation*}
h(X_{1})\leq X_{2},\ P-a.s.,
\end{equation*}%
which implies by the comparison theorem of BSDE and the $g$-convexity of $h$
that
\begin{equation*}
h(\mathbb{E}_{t,T}^{g}[X_{1}])\leq \mathbb{E}_{t,T}^{g}[h(X_{1})]\leq
\mathbb{E}_{t,T}^{g}[X_{2}],\ P-a.s.
\end{equation*}%
Thus
\begin{equation}
(\mathbb{E}_{t,T}^{g}[X_{1}],\mathbb{E}_{t,T}^{g}[X_{2}])\in \mathbf{epi}%
(h),\ P-a.s.\ t\in \lbrack 0,T].  \label{eqng-convex-is-convex}
\end{equation}

It is clear that $\bar{g}$ satisfies (\ref{eqn2.2}). Moreover $(\mathbb{E}%
_{t,T}^{g}[X_{1}],\mathbb{E}_{t,T}^{g}[X_{2}])_{t\in \lbrack 0,T]}$ is the
unique solution of the following equation
\begin{equation*}
\left(
\begin{array}{c}
Y_{t}^{1} \\
Y_{t}^{2}%
\end{array}%
\right) =\left(
\begin{array}{c}
X_{1} \\
X_{2}%
\end{array}%
\right) +\int_{t}^{T}\left(
\begin{array}{c}
g(s,Y_{s}^{1},Z_{s}^{1}) \\
g(s,Y_{s}^{2},Z_{s}^{2})%
\end{array}%
\right) \,ds-\int_{t}^{T}\left(
\begin{array}{c}
Z_{s}^{1} \\
Z_{s}^{2}%
\end{array}%
\right) \,dW_{s}.
\end{equation*}%
Then (\ref{eqng-convex-is-convex}) implies that $\mathbf{epi}(h)$ enjoys $%
\bar{g}$-BSVP, as required.

(ii)$\Rightarrow $(i): Assume that $\mathbf{epi}(h)$ enjoys $\bar{g}$-BSVP,
i.e., for any $X=(X_{1},X_{2})^{T}\in L^{2}(\mathcal{F}_{T};\mathbb{R}^{2})$
such that $X\in \mathbf{epi}(h)$, we have
\begin{equation*}
(\mathbb{E}_{t,T}^{g}[X_{1}],\mathbb{E}_{t,T}^{g}[X_{2}])\in \mathbf{epi}%
(h),\quad P-a.s.,\ t\in \lbrack 0,T],
\end{equation*}%
and by the definition of $\mathbf{epi}(h)$,
\begin{equation*}
h(\mathbb{E}_{t,T}^{g}[X_{1}])\leq \mathbb{E}_{t,T}^{g}[X_{2}],\quad
P-a.s.,\ t\in \lbrack 0,T].
\end{equation*}%
For any given $X_{1}$ such that $X_{1}\in L^{2}(\mathcal{F}_{T})$, putting $%
X_{2}=h(X_{1})$ yields
\begin{equation*}
h(\mathbb{E}_{t,T}^{g}[X_{1}])\leq \mathbb{E}_{t,T}^{g}[X_{2}]=\mathbb{E}%
_{t,T}^{g}[h(X_{1})],
\end{equation*}%
as required.
\end{proof}

\begin{remark}
In the proof of Theorem \ref{thm-g-convex-BSVP}, we note that we do not need
condition (b) of (\ref{eqn2.2}), $(g(t,0,0))_{t\in \lbrack 0,T]}\in L_{%
\mathcal{F}}^{2}(0,T)$ is enough.
\end{remark}

\begin{corollary}
\label{thm-g-convex-convex} If a continuous functions $h$ is $g$-convex,
then $h$ is convex.
\end{corollary}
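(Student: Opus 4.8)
### Proof proposal for Corollary \ref{thm-g-convex-convex}

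The plan is to combine the equivalence just proved in Theorem \ref{thm-g-convex-BSVP} with the convexity property of sets enjoying the backward stochastic viability property, namely Lemma \ref{thm2.4}. Since the corollary is stated for a general continuous $g$-convex function, the argument should be short and purely a matter of chaining together two results already in hand.

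First I would note that by Theorem \ref{thm-g-convex-BSVP}, the $g$-convexity of $h$ is equivalent to the statement that $\mathbf{epi}(h)$ enjoys $\bar g$-BSVP, where $\bar g$ is the product generator defined in that theorem. Since $g$ satisfies (\ref{eqn2.2}), the product generator $\bar g$ also satisfies (\ref{eqn2.2}) (this was already observed inside the proof of Theorem \ref{thm-g-convex-BSVP}), so Lemma \ref{thm2.4} applies with $K = \mathbf{epi}(h)$ and the generator $\bar g$. Hence $\mathbf{epi}(h)$ is a convex subset of $\mathbb{R}^2$.

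Finally I would invoke the elementary fact that a function $h:\mathbb{R}\to\mathbb{R}$ is convex if and only if its epigraph $\mathbf{epi}(h) = \{(x_1,x_2)\in\mathbb{R}^2 : h(x_1)\le x_2\}$ is a convex set. Since we have just shown $\mathbf{epi}(h)$ is convex, $h$ is convex in the usual sense, which completes the proof.

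The only point requiring a word of care is that Lemma \ref{thm2.4} as quoted from \cite{buckdahnQR2000} carries an extra continuity assumption on $g$ in its stated hypotheses; but the remark immediately following Lemma \ref{thm2.4} already records that this continuity condition is not actually needed, so condition (\ref{eqn2.2}) on $\bar g$ suffices. I do not anticipate any genuine obstacle here — the substance of the corollary is entirely contained in Theorem \ref{thm-g-convex-BSVP} and Lemma \ref{thm2.4}, and this corollary merely packages their combination together with the standard epigraph characterization of convexity.
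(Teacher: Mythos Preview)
Your proposal is correct and follows essentially the same route as the paper: apply Theorem \ref{thm-g-convex-BSVP} to get that $\mathbf{epi}(h)$ enjoys $\bar g$-BSVP, invoke Lemma \ref{thm2.4} (Theorem 2.4 in \cite{buckdahnQR2000}) to conclude $\mathbf{epi}(h)$ is convex, and then use the epigraph characterization of convexity. Your version is in fact slightly more careful than the paper's, since you explicitly note that $\bar g$ satisfies (\ref{eqn2.2}) and address the continuity hypothesis via the remark following Lemma \ref{thm2.4}.
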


\begin{proof}
It is clear that $\mathbf{epi}(h)$ enjoys $\bar{g}$-BSVP. By Theorem 2.4 in
\cite{buckdahnQR2000}, $\mathbf{epi}(h)$ is a convex set, which implies that
$h$ is a convex function.
\end{proof}

Clearly, a $g$-affine function must be affine in the usual sense.
Then we have

\begin{theorem}
\label{thm-g-affinity} Let $g$ satisfy (\ref{eqn2.2}). Then the following
two statements are equivalent:

(i) A function $h$ is $g$-affine;

(ii) $h$ has the form: $h(y)=ay+b$ for some $(a,b)\in \Pi _{g}^{a}$ where
\begin{equation*}
\Pi _{g}^{a}:=\left \{ (a,b);g(t,ay+b,az)=ag(t,y,z),\ dP\times dt\text{-}%
a.s.\right \}
\end{equation*}
\end{theorem}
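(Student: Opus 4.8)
The plan is to prove the equivalence by combining the characterization of $g$-affine functions as those that are both $g$-convex and $g$-concave together with the preceding corollaries on convexity and concavity. First I would invoke Corollary~\ref{thm-g-convex-convex} (and the dual remark, Remark~\ref{rem-g-concave-is-quasiconcave}): if $h$ is $g$-affine it is both convex and concave in the usual sense, hence $h(y)=ay+b$ for some real constants $a,b$. So the only content left is to identify which pairs $(a,b)$ actually give a $g$-affine function, and this is exactly the set $\Pi_g^a$.

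\smallskip

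For the direction (i)$\Rightarrow$(ii), once we know $h(y)=ay+b$, I would apply Theorem~\ref{thm-viscosity-is-g-convex} — or, more directly, the viscosity-subsolution characterization specialized to the smooth function $h(y)=ay+b$. Since $h\in C^2(\mathbb{R})$ with $h''\equiv 0$, $h'\equiv a$, the operator evaluates to
\begin{equation*}
\mathcal{L}_g^{t,y,z}h = g(t,ay+b,az) - a\,g(t,y,z).
\end{equation*}
Being $g$-convex forces $\mathcal{L}_g^{t,y,z}h\geq 0$ for all $(y,z)$, $dP\times dt$-a.s., and being $g$-concave forces the reverse inequality; together these give $g(t,ay+b,az)=a\,g(t,y,z)$, $dP\times dt$-a.s., i.e. $(a,b)\in\Pi_g^a$. (For the $C^2$ case one can alternatively cite Theorem~\ref{thm-g-convexity} directly, since an affine function is in $C^2$, avoiding any appeal to viscosity solutions.)

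\smallskip

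For (ii)$\Rightarrow$(i), suppose $(a,b)\in\Pi_g^a$ and set $h(y)=ay+b$. Then $\psi_t$ appearing in the proof of Theorem~\ref{thm-g-convexity} is identically zero: applying It\^o's formula to $h(Y_t)=aY_t+b$, where $(Y,Z)$ solves $(g,T,X)$, gives
\begin{equation*}
-dh(Y_t) = a\,g(t,Y_t,Z_t)\,dt - aZ_t\,dW_t = g(t,aY_t+b,aZ_t)\,dt - aZ_t\,dW_t,
\end{equation*}
using the defining identity of $\Pi_g^a$ with $(y,z)=(Y_t,Z_t)$. Hence $h(Y_\cdot)$ is itself the solution of the BSDE $(g,T,h(X))$, so $\mathbb{E}_{t,T}^g[h(X)]=h(Y_t)=h(\mathbb{E}_{t,T}^g[X])$ exactly, for every $X\in L^2(\mathcal{F}_T)$ with $h(X)=aX+b\in L^2(\mathcal{F}_T)$ (which is automatic). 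Equality means $h$ is simultaneously $g$-convex and $g$-concave, i.e. $g$-affine.

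\smallskip

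The main obstacle I anticipate is a measurability/quantifier subtlety in (i)$\Rightarrow$(ii): the $g$-convexity inequality $\mathcal{L}_g^{t,y,z}h\geq 0$ is only asserted to hold $dP\times dt$-a.s. for each fixed $(y,z)$, and we want to conclude the pointwise identity $g(t,ay+b,az)=a\,g(t,y,z)$ with the exceptional null set uniform over $(y,z)$ — but this is handled exactly as in the proof of Proposition~\ref{corhu} (take a countable dense set of $(y,z)$ and use the Lipschitz continuity of $g$ in $(y,z)$ from (\ref{eqn2.2})(a) to extend), so no genuinely new difficulty arises. The rest is a direct assembly of results already established.
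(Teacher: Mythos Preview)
Your proposal is correct and follows exactly the route the paper intends: the paper does not actually write out a proof of Theorem~\ref{thm-g-affinity}, it merely remarks ``Clearly, a $g$-affine function must be affine in the usual sense'' (via Corollary~\ref{thm-g-convex-convex} and its concave analogue) and states the theorem, leaving the rest implicit. Your argument---reduce to $h(y)=ay+b$, then read off $(a,b)\in\Pi_g^a$ from $\mathcal{L}_g^{t,y,z}h=0$ via Theorem~\ref{thm-g-convexity}, and conversely verify the BSDE for $aY_t+b$ directly---is precisely the natural completion, and your handling of the null-set uniformity via Lipschitz continuity is the standard fix.
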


\section{More Properties of $g$-Convexity}

\label{sec7}

\subsection{Functional operations preserving $g$-convexity}

\label{subset7.1}

It is natural to build up new $g$-convex functions from simpler ones, via
operations preserving $g$-convexity, or even yielding it.

\begin{proposition}
\label{thm-g-convexity-nonsmooth0} Let $g$ satisfy (\ref{eqn2.2}), $\varphi
\in C(\mathbb{R})$. If $\mathcal{D}$ is a nonempty subset of $g$-convex
functions dominated by $\varphi $, then the function
\begin{equation*}
f(y)=\sup \left \{ h(y):h\in \mathcal{D}\right \} .
\end{equation*}%
is $g$-convex.
\end{proposition}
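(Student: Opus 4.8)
The plan is to show that the supremum $f$ inherits $g$-convexity directly from the defining inequality (J), using the monotonicity (A1) and the continuity of $\mathbb{E}^g$, together with a standard argument that an upper envelope of convex (hence continuous on the interior, but here we also need $f$ itself continuous) functions that is dominated by $\varphi$ is finite-valued. First I would observe that each $h\in\mathcal{D}$ is convex by Corollary \ref{thm-g-convex-convex}, so $f$, being a pointwise supremum of convex functions bounded above by $\varphi$, is convex and finite on $\mathbb{R}$, hence automatically continuous. Thus $f$ is a legitimate candidate to be $g$-convex.

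Next, fix $X\in L^2(\mathcal{F}_T)$ with $f(X)\in L^2(\mathcal{F}_T)$. For each $h\in\mathcal{D}$ we have $h\le f$ pointwise, so $h(X)\le f(X)$, and since $h(X)$ is squeezed between $h$ evaluated at $X$ and $f(X)$ — more precisely $|h(X)|$ is controlled by $\max(|\varphi(X)|,|f(X)|)$ plus a constant on the relevant range — one checks $h(X)\in L^2(\mathcal{F}_T)$ as well, so the $g$-convexity of $h$ applies: $h(\mathbb{E}_{t,T}^g[X])\le \mathbb{E}_{t,T}^g[h(X)]$, $P$-a.s., for all $t\in[0,T]$. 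Now apply monotonicity (A1) of $\mathbb{E}^g$ to the inequality $h(X)\le f(X)$ to get $\mathbb{E}_{t,T}^g[h(X)]\le \mathbb{E}_{t,T}^g[f(X)]$, $P$-a.s. Chaining these two inequalities yields, for every $h\in\mathcal{D}$,
\begin{equation*}
h(\mathbb{E}_{t,T}^g[X])\le \mathbb{E}_{t,T}^g[f(X)],\quad P\text{-a.s.},\ t\in[0,T].
\end{equation*}
Taking the supremum over $h\in\mathcal{D}$ on the left gives $f(\mathbb{E}_{t,T}^g[X])\le \mathbb{E}_{t,T}^g[f(X)]$, which is exactly (J) for $f$.

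The one genuinely delicate point — and the step I expect to be the main obstacle — is the passage from "the inequality holds $P$-a.s. for each fixed $h$" to "it holds $P$-a.s. simultaneously for the supremum over $h\in\mathcal{D}$," since $\mathcal{D}$ may be uncountable and an uncountable union of null sets need not be null. To handle this I would reduce to a countable sub-supremum: since $f$ is continuous (indeed convex and finite), a standard separability argument produces a countable subfamily $\{h_n\}\subset\mathcal{D}$ with $\sup_n h_n = f$ pointwise on $\mathbb{R}$; then $\sup_n h_n(\mathbb{E}_{t,T}^g[X]) = f(\mathbb{E}_{t,T}^g[X])$ (using continuity of $f$ to identify the pointwise sup of the countable family at the random argument), and the countable union of the exceptional null sets is null. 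This gives (J) for $f$ with $X$ bounded, and then — exactly as in the proof of Theorem \ref{thm-g-convexity} — convexity of $f$ together with Lemma \ref{bound} extends (J) to all $X\in L^2(\mathcal{F}_T)$ with $f(X)\in L^2(\mathcal{F}_T)$, completing the proof.
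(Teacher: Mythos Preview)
Your proof is correct and follows essentially the same route as the paper: show $f$ is convex, chain $h(\mathbb{E}_{t,T}^g[X])\le \mathbb{E}_{t,T}^g[h(X)]\le \mathbb{E}_{t,T}^g[f(X)]$ for bounded $X$, take the supremum over $h$, and extend via Lemma~\ref{bound}. You are in fact more careful than the paper on the uncountable-supremum issue (the paper simply invokes ``the arbitrariness of $h$''), and your countable-subfamily reduction is the right fix; the mid-proof detour about $h(X)\in L^2$ for general $X$ is unnecessary once you restrict to bounded $X$, but it does no harm.
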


\begin{proof}
It is clear that $f$ is convex. For any given $h\in \mathcal{D}$, Jensen's
inequality for $g$-expectation holds, thus for any $X\in L^{\infty}(\mathcal{F}%
_{T})$, we have
\begin{equation*}
\mathbb{E}_{t,T}^{g}[h(X)]\geq h(\mathbb{E}_{t,T}^{g}[X]).
\end{equation*}%
From the definition of $f$ and comparison theorem of BSDEs, it follows that
\begin{equation*}
\mathbb{E}_{t,T}^{g}[f(X)]\geq \mathbb{E}_{t,T}^{g}[h(X)]\geq h(\mathbb{E}%
_{t,T}^{g}[X]).
\end{equation*}%
This with the arbitrariness of $h$ and Lemma \ref{bound} yields what
is required.
\end{proof}

Clearly, the function $f$ in Theorem \ref{thm-g-convexity-nonsmooth0} may be
only continuous instead of in $C^{2}$ and if $h_{1}$ and $h_{2}$ are $g$%
-convex, then so is $h(y)=h_{1}(y)\vee h_{2}(y)$. In addition, for the case
of $g$-concavity, we also have the same result, in which the "sup" is
replaced by "inf".

The following result is also easy:

\begin{proposition}
\label{prop-gpi-g-equivalence} Let $\varphi \in C^{2}(\mathbb{R})$,
$g$ satisfy (\ref{eqn2.2}). If there exists at least one $g$-convex
function that dominates $\varphi $, then $\varphi $ is $g$-convex if
and only if it is represented as the supremum of all $g$-convex
$C^{2}$-functions that dominate $\varphi $.
\end{proposition}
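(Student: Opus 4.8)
The plan is to prove both implications directly from the definitions, using Proposition \ref{thm-g-convexity-nonsmooth0} for one direction and a pointwise comparison argument for the other. Write $\Phi$ for the family of all $g$-convex $C^{2}$-functions that dominate $\varphi$, and let $f(y):=\sup\{h(y):h\in\Phi\}$. By hypothesis $\Phi$ is nonempty, and every $h\in\Phi$ satisfies $h\geq\varphi$, so $f\geq\varphi$ is well defined (it is dominated by any fixed member of $\Phi$, hence finite).

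First I would prove the ``if'' direction. Suppose $\varphi=f$, i.e. $\varphi$ is the supremum of the family $\Phi$. Each $h\in\Phi$ is $g$-convex by definition of $\Phi$, and $\Phi$ is a nonempty set of $g$-convex functions all dominated by the single continuous function $\varphi$ itself (since $f=\varphi$ means $h\leq f=\varphi$ for all $h\in\Phi$). Hence Proposition \ref{thm-g-convexity-nonsmooth0}, applied with $\mathcal{D}=\Phi$, yields that $f=\varphi$ is $g$-convex. This direction is essentially immediate once the bookkeeping is set up correctly.

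For the ``only if'' direction, suppose $\varphi\in C^{2}(\mathbb{R})$ is $g$-convex. Then $\varphi$ itself belongs to $\Phi$ (it is a $g$-convex $C^{2}$-function dominating itself), so $f\geq\varphi$ pointwise. For the reverse inequality, I claim $h\leq\varphi$ for every $h\in\Phi$. Indeed, fix $h\in\Phi$ and $y_{0}\in\mathbb{R}$; I want $h(y_{0})\leq\varphi(y_{0})$. The idea is to exploit $g$-convexity of $\varphi$ via a suitably chosen random variable. Consider, for fixed $z\in\mathbb{R}^{d}$, the forward SDE $X^{t,y_{0};z}$ of the proof of Theorem \ref{thm-viscosity-is-g-convex}, whose solution is a $g$-martingale with $\mathbb{E}^{g}_{t,T}[X^{t,y_{0};z}_{T}]=y_{0}$; applying $g$-convexity of $h$ and of $\varphi$ and the comparison theorem gives $\mathbb{E}^{g}_{t,T}[h(X^{t,y_{0};z}_{T})]\geq h(y_{0})$, but this bounds $h(y_{0})$ from below, not above, so a more careful argument is needed. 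The correct route is: since $h\geq\varphi$ and both are $C^{2}$ and convex (by Lemma \ref{prop-g-conv-in-conv} / Corollary \ref{thm-g-convex-convex}), we actually do not have $h\leq\varphi$ in general — so instead I would interpret the claim as the statement that $\varphi$ equals the supremum, which forces us to only prove $f(y_{0})\leq\varphi(y_{0})$, i.e. that no $g$-convex $C^{2}$-function lying above $\varphi$ can exceed $\varphi$ at any point. But this is false unless we read ``dominate'' as part of a variational characterization; the intended reading must be that the supremum is taken over $g$-convex $C^{2}$-functions $h$ with $h\leq\varphi$ that are moreover required to... \emph{Reconsidering:} the statement only asserts equivalence of ``$\varphi$ is $g$-convex'' with ``$\varphi=\sup\Phi$''. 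Since $\varphi\in\Phi$ when $\varphi$ is $g$-convex, we get $\varphi\leq\sup\Phi=f$; and since every $h\in\Phi$ satisfies $h\leq$ (some dominating $g$-convex function), we need the family to actually have $\varphi$ as its sup, which holds because $\varphi\in\Phi$ already realizes the value and all other members only push the sup up --- so in fact $f\geq\varphi$ always, with equality exactly when $\varphi$ itself is the largest, i.e. iff $\varphi$ is $g$-convex and $\Phi$ has no member strictly above it; since $\varphi\in\Phi$ gives $f\ge\varphi$, and membership does not by itself give $f\le\varphi$, the real content is: \emph{if $\varphi$ is $g$-convex then it is the maximum of $\Phi$}, which is immediate because $\varphi\in\Phi$ and every other $h\in\Phi$ dominates $\varphi$, so $f=\varphi$ would require $h=\varphi$ for all such $h$ --- meaning the correct formulation is that $\Phi$ then consists only of functions $\geq\varphi$ with $\varphi$ itself the pointwise infimum, giving the representation ``$\varphi$ is the supremum of the $g$-convex minorants'' in the dual/concave picture.

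\textbf{Main obstacle.} Thus the genuine difficulty, and the step I expect to be the crux, is \emph{pinning down the precise variational formulation} so that the two halves match: one must show (a) if $\varphi$ is $g$-convex it arises as such a supremum — trivial via $\varphi\in\Phi$ — and (b) conversely, a pointwise supremum of $g$-convex $C^{2}$-functions is $g$-convex, which is exactly Proposition \ref{thm-g-convexity-nonsmooth0} provided the family is uniformly dominated by a continuous function (here, by $\varphi=f$ itself). Once the roles of the dominating function and the supremand family are correctly identified, both directions reduce to invoking Proposition \ref{thm-g-convexity-nonsmooth0} and the observation that a $g$-convex $\varphi$ lies in its own family $\Phi$; I would present it in that order, flagging the domination hypothesis as the place where the assumed existence of one $g$-convex function above $\varphi$ is used.
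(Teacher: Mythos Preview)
Your confusion is entirely understandable, but it stems from taking the phrase ``dominate $\varphi$'' literally as $h\geq\varphi$. In context --- compare the wording of Proposition~\ref{thm-g-convexity-nonsmooth0}, where the family is ``dominated by $\varphi$'' --- the intended family is $\Phi=\{h\in C^{2}(\mathbb{R}):h\text{ is $g$-convex and }h\leq\varphi\}$. Under your reading $h\geq\varphi$, the ``only if'' direction is simply false: if $\varphi$ is $g$-convex there is no reason $\sup\Phi$ should equal $\varphi$ (indeed $\sup\Phi$ need not even be finite), and you correctly diagnose exactly this as the obstruction that wrecks your argument. Your several attempts to rescue things by reinterpreting the variational formulation or passing to a ``dual/concave picture'' are circling the right issue but never land on the fix, which is just to reverse the inequality defining $\Phi$.

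With $\Phi=\{h\text{ $g$-convex }C^{2}:h\leq\varphi\}$ the proof becomes the two lines the paper has in mind (it gives no proof beyond the remark ``The following result is also easy''). If $\varphi$ is $g$-convex then $\varphi\in\Phi$, while every $h\in\Phi$ satisfies $h\leq\varphi$, so $\sup\Phi=\varphi$. Conversely, if $\varphi=\sup\Phi$, apply Proposition~\ref{thm-g-convexity-nonsmooth0} with $\mathcal{D}=\Phi$ (a nonempty family of $g$-convex functions dominated by the continuous function $\varphi$) to conclude that $\varphi$ is $g$-convex. Your overall plan --- one direction via Proposition~\ref{thm-g-convexity-nonsmooth0}, the other via membership of $\varphi$ in its own family --- is exactly the intended one; only the orientation of the domination needs correcting, after which none of the detours through SDEs, Theorem~\ref{thm-viscosity-is-g-convex}, or Lemma~\ref{prop-g-conv-in-conv} are needed.
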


Motivated by Proposition \ref{thm-g-convexity-nonsmooth0} and the
discussions about abstract convexity in \cite{pallaschkeR1997} or \cite%
{singer1997}, we can find $g$-convex functions by another way.

For given $g$, we define
\begin{equation*}
\mathbf{\Pi }_{g}^{v}=\left \{ (a,b)\in \mathbb{R}^{2}:g(t,ay+b,az)\geq
ag(t,y,z),\  \forall y,z,\ dP\times dt\text{-}a.s.\right \}
\end{equation*}

It is clear that that $\mathbf{\Pi}_g^v$ cannot be empty, at least it
contains a element $(1,0)$, and if $g=\left<\xi_t,z\right>$ where $(\xi_t)_{t%
\in[0,T]}$ is a $\mathbb{R}^d$-valued progressively measurable process, $%
\mathbf{\Pi}_g^v=\mathbb{R}^2$. For each $(a,b)\in \mathbf{\Pi}_g^v$, $%
h(y)=ay+b$ is an affine $g$-convex function.

\begin{proposition}
\label{thm-g-convexity-nonsmooth1} Let $g$ satisfy (\ref{eqn2.2}) and $\phi
\in C(\mathbb{R})$. Then
\begin{equation*}
f(y)=\sup \left \{ h(y)=ay+b:\forall (a,b)\in \mathbf{\Pi }_{g}^{v}\text{
such that }h\leq \phi \right \} .
\end{equation*}%
is $g$-convex.
\end{proposition}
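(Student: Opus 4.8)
The plan is to realize $f$ as a supremum of $g$-convex functions and invoke Proposition~\ref{thm-g-convexity-nonsmooth0}. First I would observe that for each $(a,b)\in\pig^v$ the affine function $h(y)=ay+b$ is $g$-convex: this is precisely the $C^2$-criterion of Theorem~\ref{thm-g-convexity}, since $\Ll_g^{t,y,z}h = \frac12 h''(y)|z|^2 + g(t,h(y),h'(y)z) - h'(y)g(t,y,z) = g(t,ay+b,az) - a\,g(t,y,z)\ge 0$ exactly when $(a,b)\in\pig^v$ (here $h''\equiv 0$). This is already remarked in the text preceding the statement, so I would cite it.

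Next, let $\Dd$ be the collection of all affine functions $h(y)=ay+b$ with $(a,b)\in\pig^v$ and $h\le\phi$ pointwise on $\Real$. Each element of $\Dd$ is $g$-convex by the previous step, and each is dominated by $\varphi:=\phi$ (which is in $C(\Real)$ by hypothesis). If $\Dd$ is nonempty, Proposition~\ref{thm-g-convexity-nonsmooth0} applies directly and gives that $f(y)=\sup\{h(y):h\in\Dd\}$ is $g$-convex, which is exactly the claim. So the substantive point is to handle the case $\Dd=\emptyset$, or more precisely to make sense of $f$ in all cases.

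The main obstacle is therefore the degenerate case: if no affine function from $\pig^v$ lies below $\phi$, then the sup is over the empty set and $f\equiv-\infty$, which is not a real-valued function and the statement is vacuous or needs the convention $\sup\emptyset=-\infty$. One clean way to dispose of this: note $(1,0)\in\pig^v$ always, so the identity $y\mapsto y$ is always a candidate affine $g$-convex function; hence if $\phi$ dominates the identity somewhere the family is nonempty after a downward vertical shift only if $\phi-y$ is bounded below, which need not hold. Thus in general one should either (i) assume tacitly that $\Dd\ne\emptyset$ (the only interesting case), in which case the proof is the two steps above, or (ii) interpret $f$ as the largest $g$-convex minorant of $\phi$ and note it may fail to be finite. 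I would present the proof under the understanding that $\Dd\ne\emptyset$, remarking that otherwise $f\equiv-\infty$ and there is nothing to prove, and then the argument is simply: every $h\in\Dd$ is $g$-convex by Theorem~\ref{thm-g-convexity}, all are dominated by $\phi\in C(\Real)$, so Proposition~\ref{thm-g-convexity-nonsmooth0} yields that $f$ is $g$-convex.

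Concretely, the proof would read: \emph{Every affine function $h(y)=ay+b$ with $(a,b)\in\pig^v$ satisfies $\Ll_g^{t,y,z}h = g(t,ay+b,az)-a\,g(t,y,z)\ge 0$, hence is $g$-convex by Theorem~\ref{thm-g-convexity}. The collection $\Dd$ of such $h$ with $h\le\phi$ is a (nonempty) subset of $g$-convex functions all dominated by $\phi\in C(\Real)$, so by Proposition~\ref{thm-g-convexity-nonsmooth0} its pointwise supremum $f$ is $g$-convex.} I expect no genuine analytic difficulty here beyond the bookkeeping about emptiness/finiteness of the sup; the heavy lifting was done in Theorem~\ref{thm-g-convexity} and Proposition~\ref{thm-g-convexity-nonsmooth0}.
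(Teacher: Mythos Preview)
Your proposal is correct and matches the paper's own approach: the paper's entire proof is the single sentence ``The proof is similar to that of Proposition~\ref{thm-g-convexity-nonsmooth0},'' which is precisely your strategy of recognizing each $h(y)=ay+b$ with $(a,b)\in\pig^v$ as $g$-convex (stated explicitly in the paragraph preceding the proposition) and then invoking Proposition~\ref{thm-g-convexity-nonsmooth0}. Your discussion of the degenerate case $\Dd=\emptyset$ is more careful than the paper, which tacitly assumes nonemptiness.
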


\begin{proof}
The proof is similar to that of Proposition \ref{thm-g-convexity-nonsmooth0}.
\end{proof}

\begin{remark}
From the above theorem, it follows that for each $(a,b)\in \mathbf{\Pi }%
_{g}^{v}$,
\begin{equation*}
\mathbb{E}_{t,T}^{g}[aX+b]\geq a\mathbb{E}_{t,T}^{g}[X]+b.
\end{equation*}%
But we cannot change the sign "$\geq $" to "$=$" in general although $%
h(y)=ay+b$ is an affine function, because $h$ here may be not a $g$-affine
function.
\end{remark}

The following property is easy to be proved:

\begin{proposition}
Let $g$ satisfy (\ref{eqn2.2}) and let $h$ and $\psi $ be two continuous
functions. Then

(i) If $\psi $ is $g$-affine and $h$ is $g$-convex, then $h\circ \psi $ is $%
g $-convex.

(ii) If $h$ is $g$-convex and increasing, and $\psi $ is $g$-convex, then $%
h\circ \psi $ is $g$-convex.
\end{proposition}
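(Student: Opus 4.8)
The plan is to verify each of the two claimed composition rules directly from the definition of $g$-convexity (Definition \ref{defng-convexity}), using the basic monotonicity property (A1) of the $g$-expectation from Proposition \ref{prop2.4} together with the characterization of $g$-affinity from Theorem \ref{thm-g-affinity}. For part (i), suppose $\psi$ is $g$-affine and $h$ is $g$-convex. Fix $X\in L^2(\mathcal{F}_T)$ with $h(\psi(X))\in L^2(\mathcal{F}_T)$; I would first want $\psi(X)\in L^2(\mathcal{F}_T)$ as well, which follows since $\psi$ is affine by Theorem \ref{thm-g-affinity} (it has the form $\psi(y)=ay+b$), so $\psi(X)$ is square-integrable whenever $X$ is. Then $g$-affinity of $\psi$ gives $\mathbb{E}_{t,T}^{g}[\psi(X)]=\psi(\mathbb{E}_{t,T}^{g}[X])$, and $g$-convexity of $h$ applied to the random variable $\psi(X)$ gives
\begin{equation*}
\mathbb{E}_{t,T}^{g}[(h\circ\psi)(X)]=\mathbb{E}_{t,T}^{g}[h(\psi(X))]\geq h(\mathbb{E}_{t,T}^{g}[\psi(X)])=h(\psi(\mathbb{E}_{t,T}^{g}[X]))=(h\circ\psi)(\mathbb{E}_{t,T}^{g}[X]),
\end{equation*}
which is exactly the Jensen inequality (J) for $h\circ\psi$; continuity of $h\circ\psi$ is clear.

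For part (ii), suppose $h$ is $g$-convex and nondecreasing, and $\psi$ is $g$-convex. Fix $X\in L^2(\mathcal{F}_T)$ with $(h\circ\psi)(X)\in L^2(\mathcal{F}_T)$. Here a small integrability wrinkle appears: I need $\psi(X)\in L^2(\mathcal{F}_T)$ to invoke $g$-convexity of $h$ at $\psi(X)$. Since $\psi$ is convex (by Corollary \ref{thm-g-convex-convex}) it has at most linear growth from below and, being continuous, is bounded on bounded sets; combined with the hypothesis $h(\psi(X))\in L^2$ and monotonicity of $h$ one can control $|\psi(X)|$ — on the set where $\psi(X)$ is large this is governed by $h(\psi(X))$, and on the complement $\psi(X)$ is bounded. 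Granting this, $g$-convexity of $\psi$ yields $\mathbb{E}_{t,T}^{g}[\psi(X)]\geq \psi(\mathbb{E}_{t,T}^{g}[X])$, $P$-a.s., $t\in[0,T]$; then monotonicity of $h$ gives $h(\mathbb{E}_{t,T}^{g}[\psi(X)])\geq h(\psi(\mathbb{E}_{t,T}^{g}[X]))=(h\circ\psi)(\mathbb{E}_{t,T}^{g}[X])$; and finally $g$-convexity of $h$ applied to $\psi(X)$ together with (A1) gives $\mathbb{E}_{t,T}^{g}[(h\circ\psi)(X)]=\mathbb{E}_{t,T}^{g}[h(\psi(X))]\geq h(\mathbb{E}_{t,T}^{g}[\psi(X)])$. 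Chaining these three inequalities produces (J) for $h\circ\psi$.

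The genuinely routine parts are the two chains of inequalities themselves; the only point requiring a little care — and the main (minor) obstacle — is the integrability bookkeeping in part (ii), namely deducing $\psi(X)\in L^2(\mathcal{F}_T)$ from $h(\psi(X))\in L^2(\mathcal{F}_T)$ when $h$ is merely monotone (one must split according to whether $h$ is eventually constant or genuinely grows, exactly as in the proof of Lemma \ref{bound}). Once that is handled, everything reduces to the definitions and property (A1); no BSDE-level argument or viscosity machinery is needed.
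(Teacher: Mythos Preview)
Your proposal is correct and is precisely the direct argument the paper has in mind; the paper in fact gives no proof at all, merely declaring the proposition ``easy to be proved''. One small simplification for the integrability wrinkle you flag in part (ii): rather than arguing that $\psi(X)\in L^2(\mathcal{F}_T)$ from $h(\psi(X))\in L^2(\mathcal{F}_T)$ (which, as you note, requires separating the case where $h$ is constant), you can run the chain of inequalities only for $X\in L^\infty(\mathcal{F}_T)$, where $\psi(X)$ and $h(\psi(X))$ are automatically bounded, observe that $h\circ\psi$ is convex in the usual sense (convex increasing composed with convex), and then invoke Lemma~\ref{bound} directly to pass to general $X\in L^2$ with $(h\circ\psi)(X)\in L^2$ --- this is exactly how the paper handles such extensions elsewhere.
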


We also have the following stability property for $g$-convex functions.

\begin{theorem}
Let $g$ satisfy (\ref{eqn2.2}) and the $g$-convex (resp. concave) functions $%
h_{k}:\mathbb{R}\rightarrow \mathbb{R}$ converge pointwise for $k\rightarrow
\infty $ to $h:\mathbb{R}\rightarrow \mathbb{R}$. Then $h$ is $g$-convex
(resp. concave) and, for each compact set $S\in \mathbb{R}$, the convergence
of $h_{k}$ to $h$ is uniform on $S$.
\end{theorem}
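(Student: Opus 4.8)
The plan is to prove the two assertions—$g$-convexity of the pointwise limit, and local uniform convergence—in that order, since the second one is a classical consequence of convexity once we know $h$ is convex and finite.

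First I would establish that $h$ is $g$-convex. By Corollary \ref{thm-g-convex-convex}, each $h_k$ is convex in the usual sense, so the pointwise limit $h$ is convex and in particular real-valued and continuous on $\mathbb{R}$. To verify the generalized Jensen inequality for $h$, fix $X\in L^2(\mathcal{F}_T)$ with $h(X)\in L^2(\mathcal{F}_T)$; it suffices by Lemma \ref{bound} to treat $X\in L^\infty(\mathcal{F}_T)$, say $|X|\le R$. On the compact interval $[-R,R]$ the convergence $h_k\to h$ is uniform (this is the classical fact that pointwise convergence of convex functions on an open set is locally uniform, which I prove below anyway), hence $h_k(X)\to h(X)$ in $L^\infty(\mathcal{F}_T)\subset L^2(\mathcal{F}_T)$. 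Since each $h_k$ is $g$-convex we have $\mathbb{E}_{t,T}^g[h_k(X)]\ge h_k(\mathbb{E}_{t,T}^g[X])$ a.s. for all $t$. Letting $k\to\infty$: the left side converges to $\mathbb{E}_{t,T}^g[h(X)]$ by the $L^2$-continuity of $\mathbb{E}^g$ (the Remark after Lemma \ref{lem 4.1}), and the right side converges to $h(\mathbb{E}_{t,T}^g[X])$ because $\mathbb{E}_{t,T}^g[X]$ takes values in $[-R,R]$ a.s.\ (by the comparison property (A1), comparing $X$ with the constants $\pm R$) and $h_k\to h$ uniformly there. This yields $\mathbb{E}_{t,T}^g[h(X)]\ge h(\mathbb{E}_{t,T}^g[X])$ a.s.\ for all $t\in[0,T]$; combined with convexity of $h$ and Lemma \ref{bound}, $h$ is $g$-convex. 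The concave case is identical, using $g$-concavity of the $h_k$ and Remark \ref{rem-g-concave-is-quasiconcave}.

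Next I would prove the local uniform convergence. This is purely real-analytic and uses only that all $h_k$ and $h$ are finite convex functions on $\mathbb{R}$ with $h_k\to h$ pointwise. Fix a compact $S\subset\mathbb{R}$, enclose it in an open interval $(a,b)$, and pick points $a<a'<S<b'<b$. By convexity, on $[a',b']$ each $h_k$ is Lipschitz with constant bounded by $\max\{|h_k(b)-h_k(b')|,|h_k(a')-h_k(a)|\}/\min\{b-b',a'-a\}$, and since $h_k(a),h_k(a'),h_k(b'),h_k(b)$ converge, these Lipschitz constants are uniformly bounded, say by $L$, for all large $k$; the same bound applies to $h$. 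Thus $\{h_k\}$ is equi-Lipschitz on $[a',b']\supset S$; together with pointwise convergence, the standard Arzelà–Ascoli argument (or a direct $3\varepsilon$ estimate using a finite net in $S$) gives uniform convergence on $S$.

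I expect the main obstacle to be a purely bookkeeping one: making sure the reduction to bounded $X$ via Lemma \ref{bound} is legitimate, i.e.\ that knowing the Jensen inequality for $h$ only on $L^\infty(\mathcal{F}_T)$ plus convexity of $h$ really suffices—but this is exactly the content of Lemma \ref{bound}, so no genuine difficulty arises. One small technical point worth stating explicitly is that $\mathbb{E}_{t,T}^g[X]\in[-R,R]$ a.s.\ whenever $|X|\le R$, which follows from (A1) by comparison with constant terminal values (a constant $c$ together with the generator $g$ solves the BSDE with terminal value $c$ only if $g$ vanishes appropriately, so more carefully one compares the solutions of $(g,T,R)$ and $(g,T,X)$ and $(g,T,-R)$ and invokes the comparison theorem directly); either way the values of $\mathbb{E}_{t,T}^g[X]$ lie in a fixed compact set, which is all that the uniform-convergence step requires.
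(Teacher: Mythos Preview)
Your proposal is correct and follows essentially the same approach as the paper: reduce to bounded $X$, use local uniform convergence of convex functions to get $h_k(X)\to h(X)$ in $L^2$, pass to the limit in the Jensen inequality via the $L^2$-continuity of $\mathbb{E}^g$, and then invoke Lemma~\ref{bound}. The paper simply cites \cite[Theorem 3.1.5]{hiriart-urrutyL1991} for the local uniform convergence, whereas you supply a self-contained equi-Lipschitz argument; you are also more explicit than the paper about why $\mathbb{E}_{t,T}^g[X]$ stays in a fixed compact set (indeed it need not lie in $[-R,R]$, but Lemma~\ref{lem 4.1} gives a uniform $L^\infty$ bound, which is all you need).
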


\begin{proof}
Convexity of $h$ is trivial since $h_{k}$ is convex. And for each compact
set $S\in \mathbb{R}$, the convergence of $h_{k}$ to $h$ is uniform on $S$
(See \cite[pp. 177, Theorem 3.1.5]{hiriart-urrutyL1991}).

We now prove that $h$ is $g$-convex. Given bounded $\mathcal{F}_{T}$%
-measurable random variable $X$, we assume $\left \vert X\right \vert \leq M$%
. The uniform convergence means that there exists a function $\delta _{M}(k)$
with $\delta _{M}(k)\rightarrow 0$ as $k\rightarrow \infty $ such that for
each $x\in B[0,M]$ we have
\begin{equation*}
\left \vert h_{k}(x)-h(x)\right \vert \leq \delta _{M}(k).
\end{equation*}%
This implies that $h_{k}(X)\rightarrow h(X)$ in $L^{2}$ as $k\rightarrow
\infty $. Therefore by the continuity of $\mathbb{E}^{g}[\cdot ]$, we have
\begin{equation*}
\mathbb{E}_{t,T}^{g}[h(X)]\geq h(\mathbb{E}_{t,T}^{g}[X]),\quad P-a.s.\mbox{
for $t\in[0,T]$}.
\end{equation*}%
This with Lemma \ref{bound} it follows that for each $X\in L^{2}(\mathcal{F}%
_{T})$ such $h(X)\in L^{2}(\mathcal{F}_{T})$,
\begin{equation*}
\mathbb{E}_{t,T}^{g}[h(X)]\geq h(\mathbb{E}_{t,T}^{g}[X]),\quad P-a.s.\mbox{
for $t\in[0,T]$}.
\end{equation*}%
Thus $h$ is $g$-convex.
\end{proof}

\subsection{Some interesting properties of $g$-convexity}

\label{subsec7.2}


As mentioned before, for given $g$, the set of all $g$-convex functions is a
subset of that of convex functions. From Corollary \ref{thm-g-convex-convex}
and Hu's result in \cite{hu2005} (see also Corollary \ref{corhu}) it follows
that if $g$ is not super-homogeneous, then this inclusion is strict.

Unlike the classical situation, in general $h$ is $g$-convex does not
implies that $-h$ is $g$-concave. Let us consider the following example.

\begin{example}
\label{ex-01} Let $g=\left \vert z\right \vert$, the following statements
are equivalent:

(i) A function $h$ is $g$-convex;

(ii) $h$ is convex \emph{(}$h^{\prime \prime }(y)\geq 0$\emph{\ a.e.).%
\newline
}Moreover the following statements are also equivalent:

(iii) A function $h$ is $g$-concave;

(iv) $h$ is concave \emph{(}$h^{\prime \prime }(y)\leq 0$\emph{\ a.e.)} and
nondecreasing \emph{(}$h^{\prime }(y)\geq 0$\emph{\ a.e.).}
\end{example}

%
%
%
%
%
%
%

The following property implies that a convex function may not be a $g$%
-convex one.

\begin{example}
In the case when $g=ay$ where $a\in\Real$, we have $\mathbb{E}_{t,T}^{g}[c]=ce^{a(T-t)}$ and $%
\mathbb{E}_{t,T}^{g}[h(c)]=h(c)e^{a(T-t)}$. If $h$ is $g$-convex then
\begin{equation*}
h(c)e^{a(T-t)}=\mathbb{E}_{t,T}^{g}[h(c)]\geq h(ce^{a(T-t)}).
\end{equation*}%
From this relation it is easy to find a convex $h$ which is not $g$-convex.
\end{example}

We consider the following self-financing condition:
\begin{equation*}
\mathbb{E}_{t,T}^g[0]\equiv 0,\quad \forall 0\le t\le T.
\end{equation*}

\begin{corollary}
\label{cor-self-financing} Let $g$ satisfy (\ref{eqn2.2}). Then the
following three statements are equivalent:

(i) $\mathbb{E}^g[\cdot]$ satisfies the self-financing condition;

(ii) $g$ satisfies (\ref{eqn2.3})--a;

(iii) The constant function $h\equiv 0$ is $g$-affine.
\end{corollary}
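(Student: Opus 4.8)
The plan is to prove the three equivalences by establishing a cycle (i)$\Rightarrow$(ii)$\Rightarrow$(iii)$\Rightarrow$(i), which is the most economical route since each implication is short.

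First I would do (ii)$\Rightarrow$(i). Assume $g$ satisfies (\ref{eqn2.3})-a, i.e.\ $g(\cdot,0,0)\equiv 0$. Then the BSDE $(g,T,0)$ has terminal condition $X=0$; by uniqueness of the solution (Pardoux--Peng, as recalled in Section \ref{sec2}) the pair $(Y_t,Z_t)\equiv(0,0)$ solves it, because plugging $Y\equiv 0$, $Z\equiv 0$ into (\ref{eqn2.1}) gives $0 = 0+\int_t^T g(s,0,0)\,ds-0 = 0$. Hence $\mathbb{E}_{t,T}^g[0]=Y_t\equiv 0$ for all $t\le T$, which is exactly the self-financing condition.

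Next, (i)$\Rightarrow$(iii). Assume $\mathbb{E}_{t,T}^g[0]\equiv 0$. To check that $h\equiv 0$ is $g$-affine, take any $X\in L^2(\mathcal{F}_T)$ with $h(X)=0\in L^2(\mathcal{F}_T)$; then $h(\mathbb{E}_{t,T}^g[X]) = 0$ since $h\equiv 0$, and $\mathbb{E}_{t,T}^g[h(X)] = \mathbb{E}_{t,T}^g[0] = 0$ by hypothesis, so both the $g$-convexity and $g$-concavity inequalities in (\ref{eqnJ}) hold with equality. Thus $h\equiv 0$ is $g$-affine. Finally, (iii)$\Rightarrow$(ii): if $h\equiv 0$ is $g$-affine, then in particular for $X=0$ we get $\mathbb{E}_{t,T}^g[h(0)] = h(\mathbb{E}_{t,T}^g[0])$; but $h(0)=0$, so $\mathbb{E}_{t,T}^g[0]$ on the left and $h(\mathbb{E}_{t,T}^g[0])=0$ on the right give $\mathbb{E}_{t,T}^g[0]\equiv 0$. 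Taking $t=T$ is not even needed; instead, evaluating the defining BSDE for $Y_t=\mathbb{E}_{t,T}^g[0]$, which we have just shown is identically $0$, at $t$ close to $T$ and differentiating (or simply reading off the $dt$-term) forces $g(t,0,0)=0$, $dP\times dt$-a.s., i.e.\ (\ref{eqn2.3})-a.

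I expect the only mildly delicate point to be the last step, extracting $g(\cdot,0,0)\equiv 0$ from $\mathbb{E}_{t,T}^g[0]\equiv 0$: one must argue that if $Y\equiv 0$ solves (\ref{eqn2.1}) with $X=0$, then the $\int_t^T g(s,0,0)\,ds$ term and the $\int_t^T Z_s\,dW_s$ term must separately vanish (the stochastic integral is a martingale and the drift term has bounded variation), whence $Z\equiv 0$ and $\int_t^T g(s,0,0)\,ds = 0$ for all $t$, giving $g(\cdot,0,0)=0$ $dP\times dt$-a.s. Everything else is a direct unwinding of the definitions of $g$-expectation and of $g$-affinity, with no real obstacle.
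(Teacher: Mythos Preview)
Your proof is correct, but it takes a different route from the paper. The paper's argument is two lines: it defers (i)$\Leftrightarrow$(ii) to an external reference (\cite[Proposition~3.7]{peng2006b}) and reads off (ii)$\Leftrightarrow$(iii) from the general characterization Theorem~\ref{thm-g-affinity}, which says $h$ is $g$-affine iff $h(y)=ay+b$ with $g(t,ay+b,az)=ag(t,y,z)$; setting $a=b=0$ gives precisely $g(t,0,0)=0$. You instead argue directly from the BSDE: (ii)$\Rightarrow$(i) by exhibiting the solution $(Y,Z)\equiv(0,0)$; (i)$\Rightarrow$(iii) by noting both sides of (\ref{eqnJ}) are identically zero; and (iii)$\Rightarrow$(ii) by first recovering $\mathbb{E}_{t,T}^g[0]\equiv 0$ (take $X=0$ in the $g$-affine identity) and then using the uniqueness of the semimartingale decomposition of $Y\equiv 0$ to force $Z\equiv 0$ and $g(\cdot,0,0)=0$. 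Your approach is more elementary and fully self-contained, avoiding both the outside citation and the machinery behind Theorem~\ref{thm-g-affinity} (which in turn rests on the viability results of Section~\ref{sec5}); the paper's approach has the virtue of being an immediate specialization of results already established. One cosmetic slip: you announce the cycle (i)$\Rightarrow$(ii)$\Rightarrow$(iii)$\Rightarrow$(i) but in fact prove the reverse cycle (ii)$\Rightarrow$(i)$\Rightarrow$(iii)$\Rightarrow$(ii); this is of course immaterial for the equivalence.
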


\begin{proof}
The proof of the equivalence between (i) and (ii) can be found in \cite[%
Proposition 3.7]{peng2006b}. The equivalence between (ii) and (iii) follows
from Theorem \ref{thm-g-affinity} immediately.
\end{proof}

The "zero interest rate" condition means:
\begin{equation*}
\mathbb{E}_{t,T}^g[\eta]=\eta,\  \forall 0\le t\le T,\  \eta \in L^2(\mathcal{F%
}_t).
\end{equation*}

\begin{corollary}
\label{cor-zero-interest-rate} Let $g$ satisfy (\ref{eqn2.2}). Then the
following three statements are equivalent:

(i) $\mathbb{E}^g[\cdot]$ satisfies the zero interest rate condition;

(ii) $g$ satisfies (\ref{eqn2.3})--b;

(iii) For each constant $c$, the functions $h(y)=c$ and $y$ are $g$-affine.
\end{corollary}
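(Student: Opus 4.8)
\textbf{Proof proposal for Corollary \ref{cor-zero-interest-rate}.}
The plan is to prove the three implications (i)$\Rightarrow$(ii)$\Rightarrow$(iii)$\Rightarrow$(i), exploiting Corollary \ref{cor-self-financing} and Theorem \ref{thm-g-affinity} as the principal tools. For (i)$\Rightarrow$(ii), I would start from the zero interest rate condition $\mathbb{E}_{t,T}^{g}[\eta]=\eta$ for all $\eta\in L^2(\mathcal{F}_t)$. Taking $\eta$ to be a deterministic constant $y$, the BSDE $(g,T,y)$ must have $Y_s\equiv y$ as its solution, which forces $Z\equiv 0$ and hence $g(s,y,0)=0$ for $dP\times dt$-a.s., i.e. condition (\ref{eqn2.3})-b. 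One should check that ranging $y$ over the rationals (or a countable dense set) and using continuity of $g$ in $y$ gives the ``for all $y\in\mathbb{R}$'' form of (\ref{eqn2.3})-b outside a single null set.

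For (ii)$\Rightarrow$(iii), assume $g$ satisfies (\ref{eqn2.3})-b. First note that (\ref{eqn2.3})-b implies (\ref{eqn2.3})-a, so by Corollary \ref{cor-self-financing} the constant $h\equiv 0$ is $g$-affine. For a general constant $c$, I would verify directly that $(a,b)=(0,c)$ lies in $\Pi_g^a$: indeed $g(t,0\cdot y+c,0\cdot z)=g(t,c,0)=0=0\cdot g(t,y,z)$ by (\ref{eqn2.3})-b, so Theorem \ref{thm-g-affinity} gives that $h(y)=c$ is $g$-affine. Likewise $(a,b)=(1,0)$ always lies in $\Pi_g^a$ trivially, so $h(y)=y$ is $g$-affine (this is already noted after Definition \ref{defng-convexity}). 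Hence (iii) holds.

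For (iii)$\Rightarrow$(i), suppose every constant function $h(y)=c$ is $g$-affine. Then for $\eta\in L^2(\mathcal{F}_t)$ that takes finitely many constant values $c_1,\dots,c_N$ on an $\mathcal{F}_t$-partition $\{A_i\}$, apply property (A4') of Proposition \ref{prop2.4} (valid in general since no assumption on $g$ was invoked there beyond (\ref{eqn2.2})): $\mathbb{E}_{t,T}^g[\eta]=\sum_i 1_{A_i}\mathbb{E}_{t,T}^g[c_i]=\sum_i 1_{A_i}c_i=\eta$, using that $h\equiv c_i$ being $g$-affine forces $\mathbb{E}_{t,T}^g[c_i]=h(\mathbb{E}_{t,T}^g[c_i])$; more carefully, $g$-affinity of $h\equiv c_i$ applied with $X\equiv c_i$ gives $c_i=h(\mathbb{E}_{t,T}^g[c_i])\le \mathbb{E}_{t,T}^g[c_i]$ and the reverse, hence $\mathbb{E}_{t,T}^g[c_i]=c_i$. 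For general $\eta\in L^2(\mathcal{F}_t)$, approximate by $\mathcal{F}_t$-simple functions $\eta_n\to\eta$ in $L^2$ and pass to the limit using the $L^2$-continuity of $\mathbb{E}^g[\cdot]$ (Lemma \ref{lem 4.1} and the remark following it). This yields $\mathbb{E}_{t,T}^g[\eta]=\eta$, i.e. (i).

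The main obstacle I anticipate is the bookkeeping of null sets in (i)$\Rightarrow$(ii): one must extract a single $dP\times dt$-null set off which $g(t,y,0)=0$ simultaneously for all $y\in\mathbb{R}$, which requires a density argument plus the Lipschitz continuity of $g$ in $y$ from (\ref{eqn2.2})(a). Everything else is a direct application of the already-established Theorems \ref{thm-g-affinity} and Corollary \ref{cor-self-financing} together with the partition property (A4') and $L^2$-continuity of the $g$-expectation; I would also double-check that the cited ``Proposition 3.7'' style equivalence (ii)$\Leftrightarrow$(i) is not being used circularly, preferring the self-contained BSDE argument sketched above.
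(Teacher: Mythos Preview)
Your proposal is correct. The paper actually gives no explicit proof of this corollary; it is stated immediately after Corollary~\ref{cor-self-financing} and is evidently meant to be proved by the same template, namely: the equivalence (i)$\Leftrightarrow$(ii) is taken from an external reference (in the self-financing case, \cite[Proposition~3.7]{peng2006b}), while (ii)$\Leftrightarrow$(iii) follows at once from Theorem~\ref{thm-g-affinity} by checking that $(0,c)\in\Pi_g^a$ for every $c$ is exactly the condition $g(t,c,0)=0$.

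Your argument for (ii)$\Leftrightarrow$(iii) is exactly this. Where you differ is in making (i)$\Leftrightarrow$(ii) self-contained rather than citing \cite{peng2006b}: for (i)$\Rightarrow$(ii) you plug constant terminal data into the BSDE, and for (iii)$\Rightarrow$(i) you combine $\mathbb{E}_{t,T}^g[c]=c$ (immediate from $g$-affinity of $h\equiv c$) with the partition property (A4') and $L^2$-continuity to reach arbitrary $\eta\in L^2(\mathcal{F}_t)$. This is a clean, elementary route that avoids any appeal to the converse-comparison machinery; the only cost is the extra density/null-set bookkeeping you already flagged, which is handled by the Lipschitz condition in (\ref{eqn2.2})(a). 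One minor simplification: you do not need the two-sided inequality to get $\mathbb{E}_{t,T}^g[c_i]=c_i$; applying the $g$-affine identity $h(\mathbb{E}_{t,T}^g[X])=\mathbb{E}_{t,T}^g[h(X)]$ with $h\equiv c_i$ and any $X$ gives it directly.
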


\begin{proposition}
Let $g$ satisfy (\ref{eqn2.2}) and $c$ be a constant, then the following
statements are equivalent

(i) $\mathbb{E}_{t,T}^g[X+c]=\mbox{(resp. $\ge$, $\le$)}\mathbb{E}%
_{t,T}^g[X]+c$, for each $X\in L^2(\mathcal{F}_T)$,;

(ii) $g(t,y+c,z)=\mbox{(resp. $\ge$, $\le$)}g(t,y,z)$ for each $y\in \mathbb{%
R},z\in \mathbb{R}^d$.
\end{proposition}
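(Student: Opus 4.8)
The plan is to prove both implications by reducing everything to the behaviour of $g$-expectations on the special random variables that arise from shifting a BSDE solution by the constant $c$. The key observation is that if $(Y_t,Z_t)_{t\in[0,T]}$ solves the BSDE $(g,T,X)$, i.e. $Y_t = X + \int_t^T g(s,Y_s,Z_s)\,ds - \int_t^T Z_s\,dW_s$, then the process $(Y_t+c,Z_t)_{t\in[0,T]}$ solves the BSDE with terminal value $X+c$ and generator $\tilde g(t,y,z):=g(t,y+c,z)$; this is just a change of variables in the drift. Consequently $\mathbb{E}^g_{t,T}[X+c]$ and $\mathbb{E}^g_{t,T}[X]+c$ are the solutions at time $t$ of two BSDEs with the same terminal value $X+c$ but generators $g(t,y,z)$ and $g(t,y+c,z)$ respectively. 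The comparison theorem for BSDEs then links the ordering of these two quantities to the pointwise ordering of the two generators.

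For the direction (ii) $\Rightarrow$ (i): assuming $g(t,y+c,z)\ge g(t,y,z)$ for all $(y,z)$, $dP\times dt$-a.s., the comparison theorem applied to the two BSDEs above (same terminal value $X+c$, generator $g(\cdot,\cdot+c,\cdot)$ dominating generator $g$) yields $\mathbb{E}^g_{t,T}[X+c]\ge \mathbb{E}^g_{t,T}[X]+c$ a.s. for every $t$; the "$\le$" and "$=$" cases follow by reversing or combining the inequalities on $g$. For the converse (i) $\Rightarrow$ (ii): fix $(y_0,z_0)\in\mathbb{R}\times\mathbb{R}^d$ and, exactly as in the proof of Theorem \ref{thm-g-convexity} (the (i)$\Rightarrow$(ii) part), run the simple SDE $-dY_s^{t,y_0,z_0}=1_{[t,T]}(s)[g(s,Y_s^{t,y_0,z_0},z_0)\,ds-z_0\,dW_s]$ with $Y_0=y_0$, which is a $g$-martingale; apply the hypothesis to $X=Y_T^{t,y_0,z_0}$ (suitably localized by a stopping time $\tau_m$ to stay bounded) to get a comparison between $\mathbb{E}^g_{t,T}[\,Y_T+c\,]$ and $\mathbb{E}^g_{t,T}[Y_T]+c=y_0+c$. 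Then use the decomposition theorem (Proposition \ref{prop4.1}) together with the explicit BSDE satisfied by the shifted process to identify the sign of $g(s,Y_s^{t,y_0,z_0}+c,z_0)-g(s,Y_s^{t,y_0,z_0},z_0)$ on $[t,\tau_m]$, and send $s\downarrow t$ to recover the pointwise relation at $(y_0,z_0)$ for a.e. $(t,\omega)$. A cleaner alternative for this direction: since the constant shift gives $\mathbb{E}^g_{t,T}[X+c]-c$ as a $g$-expectation-type functional, one can invoke the known characterization that two $g$-expectations coincide (resp. are ordered) if and only if their generators coincide (resp. are ordered) $dP\times dt$-a.s. — this is essentially the uniqueness-of-generator result underlying Proposition \ref{corhu}.

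I expect the main obstacle to be the converse direction when $g(\cdot,y,z)$ is not assumed continuous in $t$: recovering a pointwise (in $(t,\omega)$) statement about the generator from an inequality that only holds in the averaged/integrated BSDE sense requires either a continuity hypothesis on $t\mapsto g(t,y,z)$ (as used in the proof of Theorem \ref{thm-g-convexity}) or the Lebesgue-point / approximation technique referenced there via "Theorem 8.1 in \cite{peng2005b}". In the write-up I would therefore either state the converse under the continuity-in-$t$ assumption and remark that the general case follows by the same device used earlier, or appeal directly to the generator-comparison principle so that the measure-theoretic subtlety is absorbed into a previously cited result. The (ii)$\Rightarrow$(i) direction is routine given the comparison theorem and needs no such care.
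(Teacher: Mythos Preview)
Your approach is sound but takes a genuinely different route from the paper's, and there is one computational slip worth flagging. If $(Y,Z)$ solves the BSDE $(g,T,X)$ and you set $\tilde Y_t=Y_t+c$, then $\tilde Y_t=(X+c)+\int_t^T g(s,\tilde Y_s-c,Z_s)\,ds-\int_t^T Z_s\,dW_s$, so the shifted pair solves the BSDE with terminal value $X+c$ and generator $g(t,y-c,z)$, not $g(t,y+c,z)$. With your sign, the comparison would go the wrong way; once corrected, (ii) with ``$\ge$'' says $g(t,y,z)\ge g(t,y-c,z)$, hence the generator $g$ dominates $g(\cdot,\cdot-c,\cdot)$ and comparison gives $\mathbb{E}^g_{t,T}[X+c]\ge \mathbb{E}^g_{t,T}[X]+c$, as desired.

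The paper's proof is a one-liner that uses the machinery already built rather than redoing it: statement (i) with ``$\ge$'' (resp.\ ``$\le$'', ``$=$'') is \emph{literally} the definition of the function $h(y)=y+c$ being $g$-convex (resp.\ $g$-concave, $g$-affine), since $\mathbb{E}^g_{t,T}[h(X)]\ge h(\mathbb{E}^g_{t,T}[X])$ reads exactly $\mathbb{E}^g_{t,T}[X+c]\ge \mathbb{E}^g_{t,T}[X]+c$. As $h\in C^2$ with $h'\equiv1$ and $h''\equiv0$, Theorem~\ref{thm-g-convexity} (and Theorem~\ref{thm-g-affinity} for the equality case) identifies this with $\mathcal{L}_g^{t,y,z}h=g(t,y+c,z)-g(t,y,z)\ge0$, which is (ii). Your direct BSDE/comparison argument and your plan for the converse via the decomposition theorem or the generator-comparison principle are essentially a re-derivation of Theorem~\ref{thm-g-convexity} in the special case $h(y)=y+c$; it is more self-contained, but it misses that the proposition is an immediate corollary of the paper's main characterization, and the measure-theoretic worries you raise about the converse are already handled inside that theorem's proof.
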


\begin{remark}
(ii) means that $g$ is a periodic function in $y$ with period $c$.
\end{remark}

\begin{proof}
It is clear that the function $h(y)=y+c$ is $g$-affine (resp. $g$-convex, $g$%
-concave).
\end{proof}

\begin{corollary}
\label{cor-trans-invar} The following statements are equivalent

(i) $\mathbb{E}_{t,T}^g[X+c]=\mathbb{E}_{t,T}^g[X]+c$, for each $X\in L^2(%
\mathcal{F}_T)$, $c\in \mathbb{R}$;

(ii) $g$ is independent of $y$.

(iii) $h(y)=y+c$ is $g$-affine, $c\in \mathbb{R}$.
\end{corollary}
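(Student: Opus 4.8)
\textbf{Proof proposal for Corollary \ref{cor-trans-invar}.}
The plan is to prove the cycle (i) $\Rightarrow$ (ii) $\Rightarrow$ (iii) $\Rightarrow$ (i), reusing the three corollaries and the proposition immediately preceding this statement. The implication (ii) $\Rightarrow$ (iii) should be the easiest: if $g$ is independent of $y$, then trivially $g(t,ay+b,az)=g(t,z)$ and with $a=1$ we get $g(t,y+c,z)=g(t,z)=ag(t,y,z)$, so $(1,c)\in\Pi_g^a$ for every $c\in\Real$; Theorem \ref{thm-g-affinity} then says $h(y)=y+c$ is $g$-affine for every $c$. (One should also note $y$ itself is always $g$-affine, which was remarked right after Definition \ref{defng-convexity}.)

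For (iii) $\Rightarrow$ (i), I would invoke the proposition just above this corollary (the one with the equivalence of $\mathbb{E}_{t,T}^g[X+c]$ and $\mathbb{E}_{t,T}^g[X]+c$ with $g(t,y+c,z)=g(t,y,z)$): if $h(y)=y+c$ is $g$-affine for every $c$, it is in particular $g$-affine for each fixed $c$, so that proposition's equivalence (i)$\Leftrightarrow$(ii) gives $g(t,y+c,z)=g(t,y,z)$ for each $c$; running this over all $c\in\Real$ we recover translation invariance of $\mathbb{E}_{t,T}^g$ by arbitrary constants, which is precisely (i) here. Alternatively, one can bypass the proposition and argue directly: $h(y)=y+c$ being $g$-affine means $\mathbb{E}_{t,T}^g[X+c]=\mathbb{E}_{t,T}^g[h(X)]=h(\mathbb{E}_{t,T}^g[X])=\mathbb{E}_{t,T}^g[X]+c$, using both the $g$-convex and $g$-concave inequalities in Definition \ref{defng-convexity}.

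For (i) $\Rightarrow$ (ii), the cleanest route is to apply the preceding proposition in the "=" case: for each fixed $c$, (i) here implies its hypothesis, hence $g(t,y+c,z)=g(t,y,z)$ for all $y,z$, $dP\times dt$-a.s.; since this holds simultaneously for every $c\in\Real$, $g$ does not depend on its second argument. The only subtlety worth a sentence is the order of quantifiers: the "$dP\times dt$-a.s." exceptional set a priori depends on $(c,y,z)$, but by continuity of $g$ in $(y,z)$ (Lipschitz, hence continuous, from (\ref{eqn2.2})-a) and by taking $c$ rational then passing to the limit, one gets a single null set off which $g(t,y+c,z)=g(t,y,z)$ for all real $c,y,z$ — equivalently $g$ is independent of $y$.

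I do not expect a genuine obstacle here; this corollary is essentially a bookkeeping consequence of Theorem \ref{thm-g-affinity} and the preceding proposition. The one place to be slightly careful is the handling of the almost-sure qualifier when quantifying over the continuum of constants $c$ in the step (i) $\Rightarrow$ (ii), which the continuity of $g$ resolves; everything else is a direct substitution into results already established.
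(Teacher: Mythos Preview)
Your proposal is correct and matches the paper's intent. The paper gives no explicit proof for this corollary; it is presented as an immediate consequence of the preceding proposition (which already establishes, for each fixed $c$, the equivalence of $\mathbb{E}_{t,T}^g[X+c]=\mathbb{E}_{t,T}^g[X]+c$, of $h(y)=y+c$ being $g$-affine, and of $g(t,y+c,z)=g(t,y,z)$ via Theorem \ref{thm-g-affinity}), and your write-up simply unfolds that chain across all $c\in\Real$, with the extra care about the $dP\times dt$-null sets being a nice touch the paper does not spell out.
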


This result is a generalization of Lemma 3.2 in \cite{peng2004}. In
addition, it is clear that if, for each $c\in \mathbb{R}$, $h+c$ is $g$%
-convex implies $h$ is $g$-convex, then $g$ must be independent of $y$.

\end{document}